\def\sc{\uppercase}
\newcommand{\rrvert}{\vert}
\newcommand{\llvert}{\vert}
\renewcommand{\epsilon}{\varepsilon}
\newcommand{\eqref}[1]{(\ref{#1})}
\newtheorem{theo}{Theorem}
\newtheorem{hyp}{Hypothesis}
\newtheorem{lemme}[theo]{Lemma}
\newtheorem{prop}[theo]{Proposition}
\def\A{\mathcal{A}}
\def\R{\mathbf{R}}
\def\N{\mathbf{N}}
\def\E{\mathbf{E}}
\def\P{\mathbf{P}}
\def\d{\mathrm{d}}
\def\v{\mathrm{v}}
\def\1{\mathbf{1}}
\def\t{\theta}
\def\h{\mathfrak{h}}
\def\hh{\mathfrak{h}}
\def\mm{\mathfrak{m}}
\def\ppp{\mathfrak{p}}
\def\bb{\mathfrak{b}}
\def\eps{\varepsilon}
\def\ffi{\varphi}
\begin{document}
\begin{frontmatter}

\title{Nonparametric inference for fractional diffusion}
\runtitle{Nonparametric inference for fractional diffusion}

\begin{aug}
\author{\fnms{Bruno} \snm{Saussereau}\corref{}\ead[label=e1]{bruno.saussereau@univ-fcomte.fr}}
\runauthor{B. Saussereau} 
\address{Universit\'e de Franche-Comt\'e,
Laboratoire de Math\'{e}matiques de Besan\c{c}on, UMR CNRS 6623, 16
route de Gray, 25030 Besan\c{c}on, France. \printead{e1}}
\end{aug}

\received{\smonth{5} \syear{2012}}
\revised{\smonth{12} \syear{2012}}

%
\begin{abstract}
A non-parametric diffusion model with an additive fractional Brownian
motion noise is considered in this work.
The drift is a non-parametric function that will be estimated by two
methods. On one hand, we propose a locally linear estimator based on
the local approximation of the drift by a linear function. On the other
hand, a Nadaraya--Watson kernel type estimator is studied. In both
cases, some non-asymptotic results are proposed by means of deviation
probability bound. The consistency property of the estimators are
obtained under a one sided dissipative Lipschitz condition on the drift
that insures the ergodic property for the stochastic differential
equation. Our estimators are first constructed under continuous
observations. The drift function is then estimated with discrete time
observations that is of the most importance for practical applications.
\end{abstract}

\begin{keyword}
\kwd{fractional Brownian motion}
\kwd{non-parametric fractional diffusion model}
\kwd{statistical inference}
\kwd{stochastic differential equation}
\end{keyword}
\end{frontmatter}

\section{Introduction}\label{sec1}
The inference problem for diffusion process is now a well understood
problem. The inference based on discretely observed diffusion is very
important from a practical point of view and it has also benefited from
numerous studies.
With the development of technology, differential equations driven by
noise with memory is increasingly popular in the statistical community
as a modelling device. The subject of this work concerns the
non-parametric estimation problem of the drift coefficient of a
fractional diffusion
described by the scalar equation
%
\begin{equation}
\label{eds} X_t = x_0 + \int_{0}^t
b(X_s) \,\d s + B_t^H ,\qquad t\ge0,
\end{equation}
where $x_0\in\mathbf R$ is the initial value of the process
$X=(X_t)_{t\ge0}$, and $B^H=(B_t^H)_{t\ge0}$ is
a fractional Brownian motion (fBm in short) with Hurst parameter $H\in
(0,1)$. This means that $B^H$ is a
Gaussian process, centered, starting from $0$ and such that $\mathbf E
(B^H_t-B^H_s)^2 = |t-s|^{2H}$. Therefore,
the process $B^H$ has $\hh$-H\"{o}lder continuous paths for all $\hh
\in(0,H)$.
If $H=1/2$, then $B^{H}$ is clearly a Brownian motion and we refer to
\cite{n}, Chapter 5, for a survey about the fBm.

Stochastic differential equations driven by fBm have recently carried
out a lot of development. The special case
of a constant diffusion coefficient is more specifically treated in
\cite{no} where it is proved that equation \eqref{eds} has a strong
unique solution
if we assume the linear growth condition $|b(x)|\le c_b(1+|x|)$ for $b$
when $H<1/2$, and H\"{o}lder continuity of order
$\bb\in(1-1/2H , 1)$ when $H>1/2$.
In this paper, we assume that these conditions are true.

If we suppose that the observation process is
\[
X_t = x_0 + \int_{0}^t
b(X_s) \,\d s + \sigma B_t^H ,\qquad t\ge0
\]
with some unknown diffusion coefficient $\sigma$, then one may
estimate the unknown Hurst parameter~$H$ and the diffusion coefficient
$\sigma$ via the quadratic variation (see \cite{begyn,coeur,il}).
This is the reason why we restrict ourselves to the case $\sigma=1$
and $H$ known.

Almost all the existing articles relate to the parametric case when one
consider the model
%
\begin{equation}
\label{model2} X_t = x_0 + \int_{0}^t
\theta b(X_s) \,\d s + B_t^H , \qquad t\ge0 ,
\end{equation}
with $\theta$ is the unknown parameter. Let us briefly review the
works that have been already done.
When the drift is linear, $X$ is the fractional Ornstein--Uhlenbeck
process and the estimation of~$\theta$ has attracted a lot of
attention. This problem has been first tackled by \cite{kl} using a
maximum likelihood procedure. Some least square estimates are proposed
in \cite{hn-stat,beo}. See also \cite{rao} for other methods.
When $b$ is not necessary linear, the pioneering work is \cite{tv}
(see also the extended electronic version \cite{tv-hal}). In this
paper, the maximum likelihood estimator of~$\theta$ is studied both
with continuous and discrete observations. One may also refer to \cite{bishwal}, Chapter 6 and \cite{mishura}, Chapter~6. A moment matching
estimation is done in \cite{pl} and let us finally mention that a
general discrete data maximum likelihood is proposed in \cite{ct} and
a least square method is studied in \cite{nt} for the parametric
estimation problem for model described by the equation \eqref{model2}.

To our knowledge, there is only one paper dealing with non-parametric
estimation. In \cite{mp}, the
authors consider the model
%
\begin{equation}
\label{trend} X_t = x_0 + \int_{0}^t
b(X_s) \,\d s + \epsilon B_t^H ,\qquad t\ge0 ,
\end{equation}
and proposed a kernel type estimator of the trend coefficient
$b_s:=b(x_s)$ where $(x_s)_{s\ge0}$ is the solution of equation \eqref
{trend} when $\epsilon= 0$. The asymptotic behaviour is discussed when
$\epsilon\to0$ on a finite time horizon when $H>1/2$.

Our problem is of different nature than the previous ones. We will
investigate two procedures to estimate the unknown function $b$ at a
fixed point $x \in\mathbf R$, that is, $b(x)$. We start with
estimators based on continuous observation of $X$. Then, using a
discretization, we propose estimators based on discrete time data which
are the most important for practical applications. It is difficult to
work directly with the fBm $B^H$ because it is not a semimartingale. So
we use the fundamental martingale (so called in \cite{kl}, see also
\cite{nvv}) that will have nicer properties.
Then some simple and classical ideas lead us to the construction of two
estimators of $b(x)$.
First, we consider that the drift is a constant function in a
neighbourhood of the point $x$ and thus the problem becomes parametric.
The form of the least square estimator in this parametric case is used
to propose a kernel type estimator of Nadaraya--Watson type (see \eqref
{defnw} in Definition \ref{def1}). If the drift is assumed to be
linear in a small vicinity of $x$, then by similar arguments we define
a locally linear estimator of the drift function in the point $x$ (see
\eqref{Yhatb} in Definition \ref{def2}). These local linear smoothers
are known to avoid some undesirable edge effects.

In order to study our these two estimators, we apply the same strategy.
We prove some deviation probability bounds using non-asymptotic
approach. Our probability bounds are stated conditionally to a random set.
This method has been employed in \cite{spo} for a diffusion process
with drift and variance given as non-parametric functions of the state
variable. So a first step will consist to mainly focus ourselves on a
non-asymptotic approach for which there is no difference between the
ergodic and non-ergodic cases. Similar probability bounds are valid for
the discretization of our estimators.

Then we investigate the consistency. With kernel type estimators, it is
clear that it is necessary to impose some conditions which provide that
the observed process $(X_t)_{t\ge0}$ returns to any vicinity of the
point $x$ infinitely many times. The ergodicity can guarantee this
property in the classical Brownian case (see \cite{kou}). The null
recurrence of $X$ can also be invoked when $H=1/2$ as in \cite
{loulou}. We refer to \cite{ll} for the case of Harris recurrent
diffusion. In the fractional case such ergodic properties, will hold
under the assumption that the drift has polynomial growth and satisfies
a one-sided dissipative Lipschitz condition (see \cite{kn}, e.g.).
Starting from our conditional deviation probability bound, we shall
prove that the probability of the random event with respect to which
the results are stated converges to 1 under the cited above hypotheses
on the drift $b$.
Thus, the weak consistency (this mean that the convergence holds in
probability) is proved for both estimators, for continuous and discrete
observations.

The most important results of our paper are certainly the ones
concerning the problem of the estimation of the unknown value of the
drift $b$ in a fixed point $x$ under discrete observations of the
process $X$. For simplicity, we describe the Nadaraya--Watson estimator.
For equally spaced observation times $\{t_k\}_{0\le k\le n}$, we denote
$\epsilon_n$ the mesh size defined by $\epsilon_n=t_{k+1}-t_k = c
n^{-\mathfrak{a}}$ with a positive constant $c$ and $\mathfrak{a}\in
(0{,}1)$. We may also observe that $n=t_n^\gamma$ (up to a
multiplicative constant) with $\gamma=1/(1-\mathfrak{a})>1$. The
Nadaraya--Watson estimator of $b(x)$ with the bandwidth~$h$ is defined
at time $t_n$ by
\[
\hat b^{\mathrm{\sc{nw}}}_{t_n,h}(x) = \frac{\sum_{k=0}^{n-1} (t_n-t_k)^{1-2H} N  ( {(X_{t_k}-x)}/h  ) ( X_{t_{k+1}}-X_{t_k} )}%
{ \sum
_{k=0}^{n-1} (t_n-t_k)^{1-2H}
N ( {(X_{t_k} -x)}/h ) ( t_{k+1}-t_k )},
\]
where the kernel $N$ is a positive regular function with support in
$[-1{,}+1]$. It is obtained via a discretization of the continuous
version of the estimator (see Definition \ref{def1}). Some deviation
probability bounds are proved for the continuous and the discrete
version of the Nadaraya--Watson estimator in Theorem \ref{th-dev}.
If we assume that the drift has polynomial growth of order $\mm$ and
satisfies a one-sided dissipative Lipschitz, then we obtain ergodic
properties for the solution of \eqref{eds} (see Proposition \ref
{ergo}). This means that there exists a random variable $\bar X$ such
that the solution of equation \eqref{eds} converges for $t\to\infty$
to the stationary and ergodic process $(\bar X_t)_{t\ge0}=(\bar
X(\theta_t(\omega)))_{t\ge0}$ where $\theta_t$ is the appropriate
shift operator on the canonical probability space associated to the fBm.
Then we shall prove the consistency of the estimator:
\[
\hat b^{\mathrm{\sc{nw}}}_{t_n,h}(x) \mathop{{\hbox to 50pt{
\rightarrowfill}}}^{\mathrm{in\ probability}}_{n\to\infty,h\to 0} b(x)
\]
under the additional assumption that the number of approximation points
satisfies $n=t_n^\gamma$ with $\gamma>1+\mm H^2$ (see Remark \ref
{talk} for a discussion about the dependence between $n$, $\mm$ and
$H$) and another assumption on the non-degeneracy of the stationary
solution (see Hypothesis \ref{reg-inv}).

Similar results are obtained for the locally linear estimator that we
do not present in this \hyperref[sec1]{Introduction} because it would require further
and heavy notation. Nevertheless, the approach is identical. We first
construct a continuous time version of the estimator in Definition \ref
{def2} and some deviation probability bounds are obtained in Theorem
\ref{th-dev-ll}. Then a discrete version is proposed (see Definition
\ref{hatn}) and the consistency is obtained in
Theorem \ref{consist-ll} which is the other main result of this work.

The remainder of this paper is structured as follows. In Section \ref
{prelim}, we give some notation and we state our main assumptions.
Then we recall the link between $B^H$ and the fundamental martingale
for which classical stochastic calculus is available. This allows us to
introduce a new observable process having a semi-martingale
decomposition (see \eqref{YB}). Then we enunciate the ergodic
properties of the solution. The ergodic property under discrete
observations as it is stated in Proposition \ref{ergo} is new under
our assumptions on the discretization procedure. Its proof is postponed
in Appendix \ref{ergo-proof}. Section \ref{nada} is devoted to the
Nadaraya--Watson estimator of the drift whereas the study of the
locally-linear estimator is done in Section \ref{loclin}. In the two
sections cited above, we state deviation probability bounds and
consistency of the estimators under continuous and discrete
observations. Some proofs related to the locally-linear estimator are
gathered in Section \ref{proofs}. Finally, we shall make use of a
Fernique's type lemma that is stated and proved in Appendix \ref{app1}.
%
\section{Preliminaries}\label{prelim}
We consider a complete probability space $(\Omega, \mathcal F,
\mathbf P)$ on which a one dimensional fractional Brownian motion
$B^H$ is defined. We denote $\mathcal F_t
=\sigma(B^H_s,s\le t)$ the $\sigma$-field generated by $B^H$ completed
with respect to $\mathbf P$.

In a first subsection, we give some notation and we state our
assumptions. Then we indicate how to associate to the observed process
an auxiliary
semi-martingale which is appropriate for the statistical analysis. For
this, we will resume the notation of \cite{kl,nvv}.
Thereafter, the ergodic properties of the stochastic differential
equation \eqref{eds} will be discussed under ad-hoc
assumption on the drift.

\subsection{Notation and assumptions}
In all the sequel, we use the following notation. If $f$ and $g$ are
two functions form ${\mathbf R}$ to ${\mathbf R}$, we write
$f(t)\succeq g(t) $ when there exists a constant $K$ such that
$f(t)/g(t)\ge K$.
When the ratio of $f$ and $g$ is constant, we write $f(t)\asymp g(t)$.

The drift $b$ may satisfy one or several items of the following hypothesis.\
%
\begin{hyp}\label{hypb} %
\begin{longlist}[(a)]
\item[(a)] (Local regularity). For any $x$, $b$ is
locally H\"{o}lder of order $\bb$ in the point $x$: there exists $L_x$
such that
\[
\bigl|b(y)-b\bigl(y'\bigr)\bigr|  \le L_x
\bigl|y-y'\bigr|
\]
for any $y,y'$ in a neighbourhood of $x$.
\item[(b)] (Global regularity). The drift term $b$
is continuously differentiable with a polyno\-mial growth condition on
its derivative and on $b$ itself:
there exists $c_b>0$ and $\mm\in\mathbf{N}$ such that
\[
\bigl|b(x)\bigr| +\bigl|b'(x)\bigr| \le c_b \bigl(1+|x|^\mm
\bigr) ,\qquad x\in\R .
\]
\item[(c)] (One-sided dissipative Lipschitz
condition). There exists a constant $L>0$ such that for
\[
\bigl( b(y)-b\bigl(y'\bigr) \bigr)\times\bigl(y-y'
\bigr)  \le-L \bigl|y-y'\bigr|^2 ,\qquad y,y'\in \R.
\]
\end{longlist}
\end{hyp}
We remark that the one-sided dissipative Lipschitz condition implies
that $b$ is Lipschitz with the same constant $L$.
It has been proved in \cite{nt} that there exists a unique solution to
equation \eqref{eds} under Hypotheses \ref{hypb}(b) and (c).

The kernel function that we shall need satisfies the following usual properties.
%
\begin{hyp}\label{kernel}
The kernel function $N$ is continuously differentiable, non-negative
with support in $[-1{,}1]$.
Without loss of generality, we may assume that it is bounded by $1$.
\end{hyp}
Hypothesis \ref{kernel} is supposed to be fulfilled in all the rest of
this paper.

We also need the following notation concerning the discretization of
the time interval $[0,T]$:
%
\begin{hyp}\label{tk}
For a given $n\in\N$, a time discretization $\{t_k\}_{0\le k\le n}$
is considered with equally spaced observation times $\epsilon
_{n}:=t_{k+1}-t_k\asymp n^{-\mathfrak{a}}$ with $\mathfrak{a}\in(0{,}1)$.

We observe that the number of approximation points $n$ is related to
the time horizon of the discrete observations $t_n$ by $n\asymp
t_n^\gamma$ with $\gamma= 1/(1-\mathfrak{a})>1$.
\end{hyp}
We remark that $\epsilon_{n}\asymp n^{-(\gamma-1)/\gamma}$. The
forthcoming discussions will be held by means of $\gamma$ instead of
$\mathfrak{a}$ because they lead to more readable expressions.
%
\subsection{Preliminaries on fractional Brownian motion}
It is difficult to work directly with the fBm $B^H$ because it is not a
semimartingale. Hence, we introduce some related
processes that will have nicer properties. For this purpose, let $w_H$
be the function defined by
%
\begin{equation}
\label{w} w_H(t,s)  = c_H s^{1/2-H}(t-s)^{1/2-H}
\1_{(0{,}t)}(s),
\end{equation}
where $c_H = (2H \Gamma(3/2-H)\Gamma(H+1/2))^{-1}$. Thanks to \cite
{kl,nvv}, the process $M^H=(M^H_t)_{t\ge0}$ defined by
%
\begin{equation}
\label{M} M^H_t = \int_0^t
w_H(t,s) \,\d B_s^H
\end{equation}
is a centered Gaussian process with independent increments. Its
variance function is given by
\[
\mathbf E \bigl(\bigl(M^H_t\bigr)^2 \bigr) =
\frac{\Gamma(3/2-H)}{2H\Gamma(3-2H)\Gamma(H+1/2)} t^{2-2H} := \lambda_H t^{2-2H} .
\]
Thus, $(M^H_t)_{t\ge0}$ is a martingale (called the fundamental
martingale in \cite{kl}).
The natural filtration of the martingale $M^H$ coincides with the
natural filtration of the fBm $B^H$. Finally, the process
$B=(B_t)_{t\ge0}$ defined by
\[
B_t  = \frac{1}{\sqrt{\lambda_H(2-2H)}} \int_0^t
s^{H-1/2} \,\d M^H_s
\]
is a standard Brownian motion that generates the same filtration as
$B^H$ and $M^H$.
The inverse relationship will also be helpful:
%
\begin{equation}
\label{B} M_t^H  =\bigl(\lambda_H(2-2H)
\bigr)^{1/2} \int_0^t s^{1/2-H}
\,\d B_s .
\end{equation}
We introduce the observable process $Y=(Y_t)_{t\ge0}$ defined by
%
\begin{eqnarray}
\label{Y} Y_t & =&x_0+ \int_0^t
w_H(t,s)\,\d X_s
\nonumber
\\[-8pt]
\\[-8pt]
\nonumber
& = &x_0 + \int_0^t
w_H(t,s)b(X_s) \,\d s + \int_0^tw_H(t,s)\,\d B_s^H.
\end{eqnarray}
By \eqref{M} and \eqref{B}, we have the following alternative expressions:
%
\begin{eqnarray}\label{YB}
Y_t& =& x_0 + \int_0^t
w_H(t,s)b(X_s) \,\d s + M_t^H
\nonumber
\\[-8pt]
\\[-8pt]
\nonumber
&= &x_0 + \int_0^t
w_H(t,s)b(X_s) \,\d s + \bigl(\lambda _H(2-2H)
\bigr)^{1/2}\int_0^t
s^{1/2-H}\,\d B_s .
\end{eqnarray}
In order to use the martingale $M^H$, we remark that
\[
w_H(t,s) \,\d s = \frac{c_H}{(2-2H) \lambda_H} (t-s)^{1/2-H}s^{H-1/2}
\,\d \bigl\langle M^H\bigr\rangle_s,
\]
thus if we let
\[
\tilde w(t,s)  = \frac{c_H}{(2-2H) \lambda_H} (t-s)^{1/2-H}s^{H-1/2}
\1_{(0{,}t)}(s)
\]
we may write
%
\begin{equation}
\label{YM2} Y_t = x_0 + \int_0^t
\tilde w_H(t,s)b(X_s) \,\d \bigl\langle M^H\bigr
\rangle_s + M_t^H .
\end{equation}
The above representations will be the starting point of the
construction of our estimators.
\subsection{Ergodic properties of the stochastic differential equation}

In this subsection, we give details on the ergodic properties
of the fractional SDE \eqref{eds}. We use the results of Section 4 in
\cite{kn}, and we borrow the presentation of \cite{nt}.
However, we repeat it for conciseness and we give some precisions.

Without loss of generality, we work on the canonical probability space
$(\Omega,\mathcal F,\P)$
associated to a fBm $B^H:=(B_t^H)_{t\in\R}$ defined on $\R$
entirely. This means that $B^H$ is a zero mean
Gaussian process having the variance function equals to $\E
(|B_t^H-B_s^H|^2) = |t-s|^{2H}$ for any $s,t\in\R$.
The Wiener space $\Omega$ is the topological space $ C_0(\R;\R)$
equipped with the compact open topology and $\mathcal F$ is the
associated Borel $\sigma$-algebra.
The measure $\P$ is the distribution of the fBm $B^H$ which now
corresponds to the evaluation process $B_t^H(\omega) = \omega(t)$ for
$t\in\R$.
The law of the two-sided fBm is invariant to the shift operators with
increment $t\in\R$. In other word, the operator
$\theta_t$ defined from $\Omega$ to $\Omega$ by $\theta_t \omega
(\cdot) = \omega(\cdot+t)-\omega(\cdot)$ is such that
the shifted process $(B_s(\theta_t\cdot))_{s\in\R}$ is again a fBm.

Moreover for all integrable real valued random variable $F$ it holds
%
\begin{equation}
\label{err}  \lim_{t\to\infty} \frac{1}T \int
_0^T F \bigl( \theta_t(\omega )
\bigr) \,\d t = \E(F)\qquad \mbox{$\P$-almost surely.}
\end{equation}

The ergodic properties of \eqref{eds} will hold under the assumption
that the drift $b$
satisfies the polynomial growth condition \ref{hypb}(b) and the
one-sided dissipative Lipschitz condition \ref{hypb}(c).
Under these hypotheses, there exists a random variable $\bar X$ with
finite moments of any order, and such that
%
\begin{equation}
\label{err2} \lim_{t\to\infty} \bigl| X_t(\omega) - \bar X
\bigl(\theta_t(\omega) \bigr) \bigr| = 0
\end{equation}
for $\P$-almost-all $\omega\in\Omega$. Thus the solution of
equation \eqref{eds} converges when $t$ goes to infinity to a
stationary and ergodic process $(\bar X_t)_{t\ge0}$ defined by $\bar
X_t(\omega)=\bar X(\theta_t(\omega))$.
By \cite{h,h-pillai} the law of $(\bar X_t)_{t\ge0}$ coincides with
the attracting invariant measure for the solution of \eqref{eds}.
The next proposition will be crucial when we will study consistency of
our estimators.
%
\begin{prop}\label{ergo}
Assume that Hypotheses \textup{\ref{hypb}(b)} and \textup{(c)} are true.
Consider a continuously differentiable function
$\ffi$ such that
%
\begin{equation}
\label{hyp-ffi}  \bigl|\ffi(y)\bigr|+\bigl|\ffi'(y)\bigr| \le c_\ffi
\bigl(1+|y|^\ppp\bigr) ,\qquad  y\in{\mathbf R}
\end{equation}
for some $c_\ffi>0$ and $\ppp\in\mathbf{N}$.
\begin{longlist}[(ii)]
\item[(i)]We have
%
\begin{equation}
\label{ergo-lim1} \lim_{T\to\infty} \frac{1}T \int
_0^T \ffi(X_s)\,\d s =\E \bigl( \ffi (
\bar X) \bigr) \qquad\mbox{$\P$-a.s.}
\end{equation}
\item[(ii)]If $\gamma> 1+(\mm^2+\ppp)  H$ and
$\gamma> \ppp+1$ then
%
\begin{equation}
\label{ergo-lim2} \lim_{n\to\infty} \frac{1}{t_n} \int
_0^{t_n} \Biggl\{ \sum
_{k=0}^{n-1} \ffi(X_{t_k})
\1_{[t_k{,}t_{k+1})}(s) \Biggr\} \,\d s  = \E \bigl( \ffi(\bar X) \bigr)\qquad \mbox{$\P$-a.s.},
\end{equation}
where the observation times are defined in Hypothesis \ref{tk}.
\end{longlist}
%
\end{prop}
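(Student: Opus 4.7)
The plan is to deduce (i) from the shift-ergodic identity \eqref{err} combined with the pathwise approximation \eqref{err2}, and then to obtain (ii) by writing the discrete Riemann sum as the integral appearing in (i) plus a time-step error that can be controlled via the polynomial growth of $b$, the local H\"older regularity of $B^H$ on $[0,t_n]$, and uniform-in-$t$ moment estimates on $X_t$.

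For part \ref{ergo}.i), I would first observe that $\E|\ffi(\bar X)|<\infty$ since $\bar X$ has finite moments of every order and $\ffi$ has polynomial growth of degree $\ppp$, and then apply \eqref{err} with $F=\ffi(\bar X)$ to get $\frac{1}{T}\int_0^T \ffi(\bar X_s)\,ds \to \E(\ffi(\bar X))$ almost surely. It then suffices to show that
\[
\frac{1}{T}\int_0^T\big(\ffi(X_s)-\ffi(\bar X_s)\big)\,ds\xrightarrow[T\to\infty]{}0\qquad\P\text{-a.s.}
\]
The mean-value theorem and the bound $|\ffi'(y)|\le c_\ffi(1+|y|^\ppp)$ give
\[
|\ffi(X_s)-\ffi(\bar X_s)|\le c_\ffi\big(1+|X_s|^\ppp+|\bar X_s|^\ppp\big)\,|X_s-\bar X_s|,
\]
so combining the a.s.\ decay \eqref{err2} with uniform-in-$s$ bounds on $\E|X_s|^q$ (which follow from Hypotheses \ref{hypb}.b)--c)) and on $\E|\bar X_s|^q$ (stationarity), a Ces\`aro/dominated-convergence argument yields the conclusion.

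For part \ref{ergo}.ii), the decomposition
\[
\frac{1}{t_n}\!\int_0^{t_n}\!\Big\{\sum_{k=0}^{n-1}\ffi(X_{t_k})\1_{[t_k,t_{k+1})}(s)\Big\}ds = \frac{1}{t_n}\!\int_0^{t_n}\!\ffi(X_s)\,ds \ + \ \frac{1}{t_n}\sum_{k=0}^{n-1}\!\int_{t_k}^{t_{k+1}}\!\!\!\big(\ffi(X_{t_k})-\ffi(X_s)\big)\,ds
\]
reduces everything, by (i), to showing that the discretization error tends to $0$ almost surely. I would bound $|\ffi(X_{t_k})-\ffi(X_s)|\le c_\ffi(1+|X_{t_k}|^\ppp+|X_s|^\ppp)|X_{t_k}-X_s|$ and then use \eqref{eds} together with $|b(x)|\le c_b(1+|x|^\mm)$ to obtain
\[
|X_{t_k}-X_s|\le c_b\,\epsilon_n\Big(1+\sup_{u\in[t_k,t_{k+1}]}|X_u|^\mm\Big)+C_\h(\omega,t_n)\,\epsilon_n^\h
\]
for any $\h\in(0,H)$, where $C_\h(\omega,t_n)$ denotes the $\h$-H\"older seminorm of $B^H$ on $[0,t_n]$, which the Fernique-type lemma of Appendix \ref{app1} shows has finite moments of all orders and grows at most polynomially in $t_n$. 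Plugging these bounds in, using uniform moment control on $X_t$, and recalling $\epsilon_n\asymp t_n^{1-\gamma}$, produces an error of order $\epsilon_n^\h$ times a polynomial in $t_n$ whose degree depends on $\mm$ and $\ppp$; picking $\h$ close to $H$ makes the error tend to $0$ precisely under the stated conditions $\gamma>1+(\mm^2+\ppp)H$ and $\gamma>\ppp+1$.

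The hardest step is the bookkeeping in the discrete error of (ii): because $\sum_k\sup_{[t_k,t_{k+1}]}|X_u|^{\mm+\ppp}$ must be handled by Cauchy--Schwarz or H\"older, the resulting moment estimates on $\sup_{[0,t_n]}|X|$ pick up an extra factor $\mm$ compared with the purely local Lipschitz estimate on $\ffi$, which is precisely where the $\mm^2$ in the threshold originates. The secondary condition $\gamma>\ppp+1$ emerges when one separately controls the pure drift contribution coming from the first term in the bound on $|X_{t_k}-X_s|$; without it the term of order $\epsilon_n\cdot|X|^{\mm+\ppp}$ summed over $k$ and divided by $t_n$ does not vanish.
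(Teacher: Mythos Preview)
Your overall strategy matches the paper's proof: the same triangle-inequality decomposition in (i), and the same split into continuous ergodic average plus discretization error in (ii), with the increment bound $|X_s-X_{t_k}|\le c_b\epsilon_n(1+\sup|X|^\mm)+\|B^H\|_{0,t_n,\h}\epsilon_n^\h$ and the pathwise estimate $\sup_{[0,t_n]}|X|\le c(1+|x_0|+(\sup|B^H|)^\mm)$ from the dissipative condition.

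Two technical points deserve sharpening. First, in (i) the appeal to ``uniform-in-$s$ bounds on $\E|X_s|^q$'' together with Ces\`aro/dominated convergence does not by itself yield almost-sure convergence of $\frac{1}{T}\int_0^T(1+|X_s|^\ppp+|\bar X_s|^\ppp)|X_s-\bar X_s|\,ds$: moment control on the weight gives convergence in $L^1$, not a.s. The paper stays pathwise throughout, writing $|X_s|^\ppp\le c(|X_s-\bar X_s|^\ppp+|\bar X_s|^\ppp)$ and then applying Cauchy--Schwarz pathwise so that the weight factor becomes $\frac{1}{T}\int_0^T|\bar X_s|^{2\ppp}ds$, which converges a.s.\ by the ergodic identity \eqref{err} again. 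Second, in (ii) your bound on the discretization error is not of the form $C(\omega)\times(\text{deterministic null sequence})$, since $\|B^H\|_{0,t_n,\h}$ grows with $t_n$; the paper closes this by a Borel--Cantelli argument, taking $q$-th moments of the pathwise bound (for $q$ large), using \eqref{momo} to estimate $\E\|B^H\|_{0,t_n,\h}^p\le c\,t_n^{p(H-\h)}$, and checking summability of the resulting four series in $n$. It is precisely this summability check, with $\h$ chosen just below $H$, that produces the two constraints $\gamma>1+(\mm^2+\ppp)H$ and $\gamma>\ppp+1$.
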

Let us make the following remarks and comments about the above proposition.

\begin{remi}
The proof of this result is partially contained in Proposition 2.3 and
Lemma 3.1 in~\cite{nt}.
But in our result, we have a condition on the number of approximation
points that depends on $H$ and on the
degrees of polynomial growth $\mm$ and $\ppp$. We think that it is
not possible to get rid of the fact that
$n\asymp t_n^\gamma$ with $\gamma>1+\max ( (\mm^2 +\ppp)H ,  \ppp )$.
\end{remi}
A proof of this result is proposed in Appendix \ref{ergo-proof}.
%
\begin{remi}\label{talk}
The above result is valid for a wide class of drift function since $\mm
\in\{0,1,2,\ldots\}$.
We cover the case of bounded function as well than the case of linear
and polynomial growing functions. Such a
remark is valid for the function $\ffi$.

Assume that $\ffi$ is bounded together wit hits derivative ($\ppp=0$).
The condition on $\gamma$ becomes $\gamma> 1+\mm^2 H$ and thus the
number of approximation points is related to the polynomial growth
order $\mm$
and to the Hurst parameter $H$. When $\mm$ is fixed, we need more
points in the time discretization when $H$ grows.
This is intuitively correct since the trajectories becomes more regular
when $H$ increases. Thus, the process is less oscillating and it is
necessary to observe more often the diffusion in order to insure that
$X$ visits very often any
neighbourhood of any fixed point.

When $H$ is fixed, the number of approximation points is growing with
the polynomial growth coefficients $\mm$ and $\ppp$.
It is surely related to the speed of convergence of the process $X$ to
the stationary ergodic process $\bar X$.
It is intuitive to think that when we let the drift coefficient behaves
like of polynomial function,
the convergence must be slower when the degree is big. To our
knowledge, such investigation has not yet been carried out.
\end{remi}
%
\section{The Nadaraya--Watson type estimator}
\label{nada}
Our first method for estimating the value of the drift $b$ in a fixed
point $x\in\R$ is inspired of the Nadaraya--Watson
kernel regression. We construct this estimator in the following
subsection. Thereafter, some deviation probability bounds are given and
finally, the consistency will be stated under the ergodicity assumption.
%
\subsection{Construction and decomposition of the Nadaraya--Watson estimator}
\label{nada11}
First of all, we assume that the whole trajectory $(X_t)_{0\le t\le T}
$ is observed between the times $0$ and~$T$.
We will discuss a discretized version of our estimator in a moment.

The construction of a Nadaraya--Watson estimator is based on a simple
idea. First, we think that the drift $b$ is a constant function, that
is $b(x)=\theta$ for any $x$.
Hence an estimator of $\theta$ is an estimator of $b(x)$. We denote
\[
X_t^\theta= x_0 +\int_0^t
\theta \,\d s + B_t^H .
\]
Similarly to \eqref{YM2}, we introduce the observable process
$Y^\theta=(Y^\theta_t)_{t\ge0}$
\[
Y^\theta_t  = x_0 + \int_0^t
\tilde w_H(t,s) \theta \,\d \bigl\langle M^H\bigr
\rangle_s + M^H_t .
\]
The unknown parameter $\theta$ can be estimated by the least squares
method (see, e.g., \cite{lbm}).
The least squares estimator of $\t$ obtained at time $t$ is given by
\[
\hat\theta(t) = \frac{\int_0^t \tilde w_H(t,s)\,\d Y_s^\t}{\int_0^t
(\tilde w_H(t,s) )^2 \,\d \langle M^H\rangle_s} .
\]
We denote
%
\begin{equation}
\label{alfa}  \alpha_H=\frac{c_H}{\sqrt{\lambda_H (2-2H)}} ,
\end{equation}
and we remark that
\[
\int_0^t \bigl(\tilde w_H(t,s)
\bigr)^2 \,\d \bigl\langle M^H\bigr\rangle_s =
\int_0^t {\alpha}_H^2
(t-s)^{1-2H} \,\d s .
\]
Thus, we obtain the alternative representation of $\hat\theta(t)$:
\[
\hat\theta(t) = \frac{\int_0^t \tilde w_H(t,s)\,\d Y_s^\t}{\int_0^t
{\alpha}_H^2 (t-s)^{1-2H} \,\d s} .
\]
In the context of our fractional diffusion \eqref{eds}, the drift
$b$ is not constant. Hence, we approximate it by a constant function
$\theta$ in a neighbourhood $[x-h,x+h]$ of the
point $x$. For this purpose, we consider a kernel function $N$
satisfying Hypothesis \ref{kernel}.
The above discussion leads to the following definition of an estimator
of $b(x)$
by means of the observable process $Y$.
%
\begin{defi}\label{def1}
The Nadaraya--Watson estimator of the drift $b$ in a point $x$ with the
bandwidth $h$ is defined at time $t$ by
%
\begin{eqnarray}
\label{defnw} \hat b^{\mathrm{\sc{nw}}}_{{t,h}}(x)  = \frac{\int_0^t ({\alpha_H^2}/{c_H}) (t-s)^{1/2-H} s^{H-1/2} N
 ({(X_s -x)}/h  ) \,\d Y_s}%
{\int_0^t \alpha_H^2
(t-s)^{1-2H} N ( {(X_s -x)}/h ) \,\d s}
\end{eqnarray}
with the convention that $a/0:=0$. Equivalently, the more classical
expression holds
%
\begin{eqnarray}
\label{defnw2} \hat b^{\mathrm{\sc{nw}}}_{{t,h}}(x)  = \frac{\int_0^t \alpha_H^2
(t-s)^{1-2H} N  ( {(X_s -x)}/h  ) \,\d X_s}%
{\int_0^t \alpha_H^2
(t-s)^{1-2H} N ( {(X_s -x)}/h ) \,\d s} .
\end{eqnarray}
\end{defi}
Using the representation \eqref{YB} and the fact that $N\le1$, we
notice that the stochastic integral in~\eqref{defnw} is well defined.
The integral with respect to the process $X$ in \eqref{defnw2} is just
an alternative writing of the one with respect to $Y$ in \eqref
{defnw}. Moreover, starting from \eqref{defnw} and using \eqref{YB},
\eqref{w} and~\eqref{alfa} we may express our estimator as
\begin{eqnarray*}
\hat b^{\mathrm{\sc{nw}}}_{{t,h}}(x) &=&  \frac{\int_0^t \alpha_H^2 (t-s)^{1-2H} N  ( {(X_s -x)}/h
) b(X_s) \,\d s}%
{\int
_0^t \alpha_H^2
(t-s)^{1-2H} N ( {(X_s -x)}/h ) \,\d s} \\
&&{}+
\frac{\int_0^t \alpha_H (t-s)^{1/2-H} N  (
{(X_s -x)}/h  ) \,\d B_s}%
{\int_0^t
\alpha_H^2 (t-s)^{1-2H} N (
{(X_s -x)}/h ) \,\d s}.
\end{eqnarray*}
Then we obtain the following decomposition of the error:
%
\begin{equation}
\label{decomp-nw} \hat b^{\mathrm{\sc{nw}}}_{{t,h}}(x) = b(x) +
\xi_{x,h}(X_t) + r_{x,h}^{\mathrm{loc}}(X_t),
\end{equation}
where
\begin{eqnarray*}
\xi_{x,h}(X_t) & = &\frac{\int_0^t \alpha_H (t-s)^{1/2-H}
N  ( {(X_s -x)}/h  ) \,\d B_s}%
{\int
_0^t \alpha_H^2
(t-s)^{1-2H} N ( {(X_s -x)}/h ) \,\d s} ,
\\
r_{x,h}^{\mathrm{loc}}(X_t)& =& \frac{\int_0^t (t-s)^{1-2H} N  ( {(X_s -x)}/h  )  [ b(X_s) -b(x)  ] \,\d s}%
{\int_0^t (t-s)^{1-2H} N (
{(X_s -x)}/h ) \,\d s}.
\end{eqnarray*}
There are two kinds of errors in \eqref{decomp-nw}. The first one is a
stochastic one (the term $\xi_{x,h}(X_t)$).
The second one ($r_{x,h}^{\mathrm{loc}}(X_t)$) represents the accuracy
of the
local approximation of $b$ by a constant function in a neighbourhood of
the point $x$.

From a practical point of view, the real interest is the case when the
observed data are discrete.
So we provide now an effective estimation procedure. We assume that the
process $(X_t)_{0\le t\le T}$ is observed at
times $(t_k)_{0\le k\le n}$ (see Hypothesis \ref{tk}).
We discretize the expression of $\hat b^{\mathrm{\sc
{nw}}}_{{t,h}}(x)$ given in~\eqref{defnw2} by Riemann sums as
\begin{eqnarray*}
\hat b^{\mathrm{\sc{nw}}}_{t_n,h}(x)  = \frac{
\sum_{k=0}^{n-1} (t_n-t_k)^{1-2H} N  ( {(X_{t_k}
-x)}/h  ) ( X_{t_{k+1}}-X_{t_k} )}%
{ \sum
_{k=0}^{n-1} (t_n-t_k)^{1-2H}
N ( {(X_{t_k} -x)}/h ) ( t_{k+1}-t_k )}.
\end{eqnarray*}
In order to have a decomposition of the error, we consider the simple
process $(Q^n_s)_{s\ge0}$
defined by
\[
Q^n_s = \sum_{k=0}^{n-1}
(t_n-t_k)^{1-2H} N \biggl( \frac
{X_{t_k} -x}h
\biggr) \1_{[t_k{,}t_{k+1})}(s) .\
\]
With the help of equations \eqref{Y} and \eqref{YB}, we may rewrite
$\hat b^{\mathrm{\sc{nw}}}_{t_n,h}(x)$ as
\begin{eqnarray*}
\hat b^{\mathrm{\sc{nw}}}_{t_n,h}(x) & =& \frac{1}{\int_0^{t_n}
Q^n_s \,\d s} \int
_0^{t_n} Q^n_s
\,\d X_s
\\
&= &\frac{1}{\int_0^{t_n} Q^n_s \,\d s} \int_0^{t_n}
\frac{Q^n_s}{w_H(t,s)} \,\d Y_s
\\
& =& \frac{1}{\int_0^{t_n} Q^n_s \,\d s} \biggl( \int_0^{t_n}
Q^n_s b(X_s) \,\d s+\bigl(\lambda_H(2-2H)
\bigr)^{1/2}\int_0^{t_n}
\frac
{Q^n_s}{w_H(t,s)} s^{1/2-H} \,\d B_s \biggr).
\end{eqnarray*}
Thus, we obtain a similar decomposition than \eqref{decomp-nw}
%
\begin{equation}
\label{decomp-n} \hat b^{\mathrm{\sc{nw}}}_{t_n,h}(x) - b(x) =
\xi_{x,h}\bigl(X^n_{t_n}\bigr) +
r_{x,h}^{\mathrm{loc}}(X_{t_n}) +r_{x,h}^{\mathrm{traj}}(X_{t_n})
\end{equation}
with
\begin{eqnarray*}
\xi_{x,h}(X_{t_n}) & =& \frac{\int_0^{t_n} \alpha_H
(t-s)^{H-1/2}  Q^n_s \,\d B_s}{\int_0^{t_n} \alpha_H^2  Q^n_s \,\d s} ,
\\
r_{x,h}^{\mathrm{loc}}(X_{t_n}) & =&
\frac{\int_0^{t_n} \sum_{k=0}^{n-1} (t_n-t_k)^{1-2H} N  ( {(X_{t_k} -x)}/h
)   ( b(X_{t_k})-b(x) )\1_{[t_k{,}t_{k+1})}(s) \,\d s}{\int_0^{t_n} Q^n_s \,\d s} ,
\\
r_{x,h}^{\mathrm{traj}}(X_{t_n}) & =&
\frac{\int_0^{t_n} \sum_{k=0}^{n-1} (t_n-t_k)^{1-2H} N  ( {(X_{t_k} -x)}/h
)   ( b(X_{s})-b(X_{t_k}) )\1_{[t_k{,}t_{k+1})}(s) \,\d s}{\int_0^{t_n} Q^n_s \,\d s} .
\end{eqnarray*}
We remark that $r_{x,h}^{\mathrm{loc}}(X_{t_k})$ represents again the
accuracy of the
local approximation of $b$ by a constant function in a neighbourhood of
the point $x$, but only in the discrete times
$(t_k)_{0\le k\le n}$. It is worth to notice that a new term is
involved: $r_{x,h}^{\mathrm{traj}}(X_{t_n})$. It represents the error made
when one proceed to the discretization of the continuous process
$(X_s)_{s\ge0}$.

In the next subsection, we study deviation probability bounds for $\hat
b^{\mathrm{\sc{nw}}}_{t}(x)$ and
$\hat b^{\mathrm{\sc{nw}}}_{t,n}(x)$.

\subsection{Deviation probability}\label{ji}
In order to study the error from a probabilistic point of view, we need
to introduce
for some $\rho>0$ and $\beta>0$ the random sets
\begin{eqnarray*}
\A^{\mathrm{\sc{nw}}}_{t,h} & =& \biggl\{ \int_0^t
\alpha _H (t-s)^{1-2H} N \biggl( \frac{X_s -x}h \biggr)
\,\d s\ge\rho t^{1-H+\beta} \biggr\} \quad\mbox{and}
\\
\A^{\mathrm{\sc{nw}}}_{t_n,h} & =& \Biggl\{ \int_0^{t_n}
\alpha_H\sum_{k=0}^{n-1}
(t_n-t_k)^{1-2H} N \biggl( \frac{X_{t_k} -x}h
\biggr) \1_{[t_k{,}t_{k+1})}(s)\,\d s \ge\rho t_n^{1-H+\beta} \Biggr\} .
\end{eqnarray*}
Some properties of the Nadaraya--Watson estimator are stated in the
following theorem conditionally on the above events.
%
\begin{theo}\label{th-dev}
Under Hypothesis \textup{\ref{hypb}(a)}, when the trajectory is continuously
observed, we have for any $\zeta>0$:
%
\begin{eqnarray}
\label{poo2}  \mathbf P \bigl( \bigl| \hat b^{\mathrm{\sc{nw}}}_{t,h}(x)- b(x)
\bigr|\ge L_x h^\bb+ \zeta , \A^{\mathrm{\sc
{nw}}}_{t,h}
\bigr) \le2\exp \bigl( -\rho^2 (1-H) \zeta^2
t^{2\beta} \bigr) .
\end{eqnarray}
We assume that $b$ satisfies Hypotheses \textup{\ref{hypb}(b)} and
\textup{(c)}.
There exists $\tau_0\ge1$, $c_{x,h,L}>0$\footnote{See \eqref
{tau0}, \eqref{cxhl} in the proof for an explicit expression of $\tau
_0$ and $c_{x,h,L}$.}
and a constant $c_{H,\gamma}$ such that for any $t_n\ge\tau_0$, the
following conditional deviation probability bound holds:
%
\begin{eqnarray}
\label{eqdidi} &&\mathbf P \bigl( \bigl| \hat b^{\mathrm{\sc{nw}}}_{t_n,h}(x)-
b(x)\bigr|\ge L h+c_{x,h,L} \epsilon_n + \zeta ,
\A^{\mathrm{\sc
{nw}}}_{t_n,h} \bigr)
\nonumber
\\[-8pt]
\\[-8pt]
\nonumber
&&\quad \le2\exp \biggl( -\frac{\rho^2  (1-H)  \zeta^2}{16
\alpha_H^2} t_n^{2\beta} \biggr)+
c_{H,\gamma} t_n^{{H(\gamma-1)}/{(\gamma+1)}} \exp \biggl( -\frac{\zeta}{4L}
t_n^{{(2H(\gamma-1))}/{(\gamma+1)}} \biggr) .
\end{eqnarray}
\end{theo}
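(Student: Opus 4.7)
The plan is to treat the continuous and discrete statements in sequence, using decompositions \eqref{decomp-nw} and \eqref{decomp-n} respectively. In both cases the local error $r_{x,h}^{\text{loc}}$ will be handled deterministically: since $N$ is supported in $[-1,+1]$, only times $s$ with $|X_s - x|\le h$ contribute to the integrals, so Hypothesis \ref{hypb}.a) in the continuous case and the Lipschitz property implied by Hypothesis \ref{hypb}.c) in the discrete case give the pointwise bounds $|r_{x,h}^{\text{loc}}(X_t)|\le L_x h^{\bb}$ and $|r_{x,h}^{\text{loc}}(X_{t_n})|\le L h$ respectively, directly accounting for the terms $L_x h^{\bb}$ and $L h$ that appear on the left-hand sides of \eqref{poo2} and \eqref{eqdidi}.

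For the stochastic error in the continuous case, the plan is to set $M_t=\int_0^t \alpha_H(t-s)^{1/2-H} N((X_s-x)/h)\,dB_s$ and $D_t=\int_0^t \alpha_H^2 (t-s)^{1-2H} N((X_s-x)/h)\,ds$, so that $\xi_{x,h}(X_t)=M_t/D_t$. The key observation is that $0\le N\le 1$ implies $N^2\le N$, hence the \emph{deterministic} upper bound $\langle M\rangle_t\le \alpha_H^2 \int_0^t (t-s)^{1-2H}\,ds=\alpha_H^2 t^{2-2H}/(2-2H)$, while on $\A^{\text{\sc{nw}}}_{t,h}$ the denominator satisfies $D_t\ge \alpha_H \rho\, t^{1-H+\beta}$. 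A Chernoff estimate applied to the exponential martingale $\exp(\lambda M_u-\lambda^2\langle M\rangle_u/2)$, where $\langle M\rangle_t$ is replaced by its deterministic ceiling and $\lambda$ is tuned proportionally to $\zeta$, then yields
\begin{align*}
\mathbf P\bigl(|M_t|\ge \zeta\alpha_H \rho\, t^{1-H+\beta}\bigr)\le 2\exp\Bigl(-\tfrac{(\zeta\alpha_H\rho t^{1-H+\beta})^2}{2\alpha_H^2 t^{2-2H}/(2-2H)}\Bigr)=2\exp\bigl(-\rho^2(1-H)\zeta^2 t^{2\beta}\bigr),
\end{align*}
giving \eqref{poo2} after combining with the deterministic local bound.

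For the discrete case the error decomposes into three pieces: $\xi_{x,h}(X_{t_n})+r_{x,h}^{\text{loc}}+r_{x,h}^{\text{traj}}$. The stochastic piece is to be treated exactly as in the continuous case, with $Q^n_s$ replacing the continuous weight: I would reuse the deterministic quadratic-variation ceiling of order $t_n^{2-2H}$ and the lower bound $\ge \alpha_H \rho\, t_n^{1-H+\beta}$ on $\A^{\text{\sc{nw}}}_{t_n,h}$, after distributing a fraction of $\zeta$ (of order $\zeta/4$) to this piece; this is what produces the factor $16\alpha_H^2$ in the denominator of the exponent of the first summand of \eqref{eqdidi}, together with a small technical constant needed to bound $\langle \cdot \rangle_{t_n}$ after discretization.

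The hard part will be the trajectory term $r_{x,h}^{\text{traj}}$. My plan is to start from the bound $|b(X_s)-b(X_{t_k})|\le c_b(1+|X_s|^{\mm}+|X_{t_k}|^{\mm})|X_s-X_{t_k}|$ supplied by Hypothesis \ref{hypb}.b), then write $X_s-X_{t_k}=\int_{t_k}^s b(X_u)\,du+(B_s^H-B_{t_k}^H)$, which splits $r^{\text{traj}}$ into a drift contribution of order $\epsilon_n$ times a polynomial in $\sup_{u\le t_n}|X_u|$, and an fBm contribution controlled by the maximal increment of $B^H$ over meshes of size $\epsilon_n$. On a large-probability event where $\sup_{u\le t_n}|X_u|$ is polynomially bounded in $t_n$—a consequence of the dissipative property \ref{hypb}.c) together with the moment estimates behind Proposition \ref{ergo}—the trajectory error is at most the deterministic constant $c_{x,h,L}\epsilon_n$ appearing in \eqref{eqdidi}. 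The residual tail, combining the supremum of $|X_u|$ and the maximal Gaussian increment of $B^H$ over $n\asymp t_n^{\gamma}$ sub-intervals, will be estimated via a union bound and the Fernique-type lemma of Appendix \ref{app1}; the exact exponent $2H(\gamma-1)/(\gamma+1)$ will be dictated by the balance between the mesh size $\epsilon_n\asymp t_n^{-(\gamma-1)/\gamma}$, the number of increments $n$, and the required polynomial envelope on $\sup_{u\le t_n}|X_u|$. A union bound over the three sub-events will then yield \eqref{eqdidi}.
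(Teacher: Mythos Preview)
Your treatment of the continuous case and of the stochastic term $\xi_{x,h}$ in both cases matches the paper's argument essentially verbatim: deterministic bound on the local error via the kernel support, a deterministic ceiling on the bracket $\langle \cdot\rangle_t\le \alpha_H^2 t^{2-2H}/(2-2H)$, and the exponential inequality of Lemma~\ref{exp}. The $16\alpha_H^2$ in \eqref{eqdidi} indeed comes from halving $\zeta$ and from the factor of $4$ needed to dominate the discretized bracket when $H<1/2$; that part of your sketch is correct.

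The gap is in your plan for $r_{x,h}^{\mathrm{traj}}$. You propose to use $|b(X_s)-b(X_{t_k})|\le c_b(1+|X_s|^{\mm}+|X_{t_k}|^{\mm})|X_s-X_{t_k}|$ and then control $\sup_{u\le t_n}|X_u|$ on a high-probability event. But under the dissipative condition~\ref{hypb}.c), $\sup_{u\le t_n}|X_u|$ is only bounded by a polynomial in $t_n$ (cf.\ the proof of Proposition~\ref{ergo}), so your drift contribution would be $\epsilon_n$ times a \emph{power of $t_n$}, not the fixed constant $c_{x,h,L}$ that the statement requires (see \eqref{cxhl}: $c_{x,h,L}=L\{Lh+c_b(1+|x|^{\mm})\}$ depends only on $x,h,L,c_b,\mm$). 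Your approach would prove a weaker inequality with $c_{x,h,L}$ replaced by something growing in $t_n$.

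The paper avoids this by exploiting the kernel support \emph{again} for the trajectory term. Since $N((X_{t_k}-x)/h)\neq 0$ forces $|X_{t_k}-x|\le h$, one has $|b(X_{t_k})|\le |b(X_{t_k})-b(x)|+|b(x)|\le Lh+c_b(1+|x|^{\mm})$, a bound independent of $t_n$. Writing $|X_s-X_{t_k}|\le \int_{t_k}^s\{|b(X_r)-b(X_{t_k})|+|b(X_{t_k})|\}\,dr+|B_s^H-B_{t_k}^H|$ and applying the Lipschitz property of $b$ together with Gronwall on $[t_k,t_{k+1})$ yields \eqref{nx}, hence $|r_{x,h}^{\mathrm{traj}}|\le c_{x,h,L}\epsilon_n+L\epsilon_n^{\mathfrak{h}}\|B^H\|_{0,t_n,\mathfrak{h}}$. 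The second summand of \eqref{eqdidi} then comes from Chebyshev combined with Lemma~\ref{fernique} and the specific choice $\mathfrak{h}_0=H(\gamma+3)/\bigl(2(\gamma+1)\bigr)$, which balances the two competing exponents and produces exactly $2H(\gamma-1)/(\gamma+1)$. No control on $\sup_{u\le t_n}|X_u|$ is ever needed.
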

It is interesting that the above results about the quality of our
estimation are non-asymptotic and do not require any ergodic or mixing
properties of the observed process. Clearly the event $\A^{\mathrm
{\sc{nw}}}_{t_n,h}$ is completely
determined by the observed values of the trajectory of $X$. It is
therefore always possible to check whether the path belongs or not to
this set. If it is not the case, we are not able to guarantee a
reasonable quality for the estimation of $b(x)$.

In the following remark, we discuss the rate of our approximation.
%
\begin{remi}\label{rrr} %
\begin{enumerate}
\item If we choose a time dependent bandwidth $h_t$ such that $h_t^\bb
\asymp L_x^{-1} t^{-\beta/2}$,
then the rate of estimation is of order $t^{-\beta/2}$:
\[
 \mathbf P \bigl( \bigl| \hat b^{\mathrm{\sc{nw}}}_{t,h_t}(x)- b(x) \bigr| \succeq
t^{-\beta/2} , \A^{\mathrm{\sc
{nw}}}_{t,h_t} \bigr) \le2\exp \bigl( -
\rho^2 (1-H) t^{\beta} \bigr) .
\]
\item In the discrete case, for a fixed $\beta>0$, we consider:
\begin{itemize}
\item$h_{n}$ a time dependant bandwidth with $h_{n} \asymp L^{-1}
t_n^{-\beta/2}$;\vadjust{\goodbreak}
\item$\gamma=(4H+\beta)/(4H-\beta)$;
\item$\epsilon_n=t_n/n\asymp t_n^{-(\gamma-1)}$ with $\gamma-1 =
2\beta/(4H-\beta)>\beta/2$.
\end{itemize}
Then the approximation rate is again of order
$t_n^{-\beta/2}$ since \eqref{eqdidi} implies
\begin{eqnarray*}
\mathbf P \bigl( \bigl| \hat b^{\mathrm{\sc{nw}}}_{t_n,h}(x)- b(x)\bigr|\succeq
t^{-\beta/2} , \A^{\mathrm{\sc{nw}}}_{t_n,h_n} \bigr)  \preceq\exp \bigl(
-C_{\rho,L,H} t_n^{\beta} \bigr)+ t_n^{\mu_1}
\exp \biggl( -\frac{t_n^{\mu_2}}{4L} \biggr) ,
\end{eqnarray*}
with $\mu_1,\mu_2>0$.
\end{enumerate}
\end{remi}
The stochastic integral that appears in the expression of $\xi
_{x,h}(X_t)$ is a
fractional martingale (so called in \cite{hns}). In order to
study the asymptotic behaviour of the Nadaraya--Watson estimators, we
need asymptotic properties of this
fractional martingale. This will be done thanks to a straightforward
exponential inequality for
this kind of stochastic integral. One refers to \cite{s-expo}
for related results on exponential inequalities for fractional martingales.
%
\begin{lemme}
\label{exp}
We consider $K = (K_s)_{s\ge0}$, an adapted process
such that for a positive function $v$
\[
\sup_{0\le u\le t}\int_0^u
(t-s)^{1-2H} |K_s|^2\,\d s \le v(t) .
\]
Then for any $\zeta\ge0$ it holds that
%
\begin{equation}
\label{inegexp} \mathbf P \biggl( \biggl\llvert \int_0^t
(t-s)^{1/2-H} K_s \,\d B_s \biggr\rrvert \ge \zeta
\biggr)  \le2\exp \biggl( - \frac{\zeta^2}{2  v(t)} \biggr).
\end{equation}
\end{lemme}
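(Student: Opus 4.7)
The plan is to reduce the claim to the standard Bernstein-type exponential inequality for continuous martingales with bounded quadratic variation. Fix $t>0$ and, viewing $t$ as a parameter, introduce the continuous local martingale
\begin{align*}
M_u & = \int_0^u (t-s)^{1/2-H} K_s\, dB_s\ ,\qquad u\in [0,t],
\end{align*}
which is defined with respect to the filtration $(\mathcal F_u)_{u\ge 0}$ generated by $B$. Its quadratic variation is
\begin{align*}
\langle M\rangle_u & = \int_0^u (t-s)^{1-2H}\, |K_s|^2\, ds\ ,
\end{align*}
and the hypothesis on $K$ reads precisely $\sup_{0\le u\le t}\langle M\rangle_u \le v(t)$.

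First I would justify that $M$ is a bona fide square-integrable martingale on $[0,t]$: the deterministic bound $\langle M\rangle_t\le v(t)$ implies $\E(\langle M\rangle_t)<\infty$, so $M$ is a true martingale. For any $\lambda\in\r$, consider the Dol\'eans exponential
\begin{align*}
Z^\lambda_u & = \exp\!\left ( \lambda M_u - \frac{\lambda^2}{2}\langle M\rangle_u\right )\ .
\end{align*}
Because $\langle M\rangle$ is bounded by the constant $v(t)$, Novikov's condition is trivially satisfied and $Z^\lambda$ is a positive martingale starting from $1$.

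Next I would exploit this to get the one-sided bound. Using $\langle M\rangle_t\le v(t)$,
\begin{align*}
\P\big(M_t\ge \zeta\big)  & = \P\!\left ( Z^\lambda_t \ge \exp\!\big (\lambda \zeta - \mbox{$\frac{\lambda^2}{2}$}\langle M\rangle_t\big )\right )
 \le \P\!\left ( Z^\lambda_t \ge \exp\!\big (\lambda \zeta - \mbox{$\frac{\lambda^2}{2}$}v(t)\big )\right ).
\end{align*}
A Markov inequality together with $\E(Z^\lambda_t)=1$ then yields $\P(M_t\ge \zeta)\le \exp(-\lambda\zeta + \lambda^2 v(t)/2)$, and optimizing in $\lambda>0$ gives $\lambda=\zeta/v(t)$ and the bound $\exp(-\zeta^2/(2v(t)))$. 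Applying the same reasoning to $-M$, adding the two estimates produces the two-sided inequality \eqref{inegexp}.

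The argument above is essentially routine once the deterministic bound on $\langle M\rangle$ is in hand; there is no genuine obstacle, only the mild care needed to check that $Z^\lambda$ is a true martingale before applying Markov's inequality, which is where the hypothesis $\sup_{0\le u\le t}\int_0^u (t-s)^{1-2H}|K_s|^2 ds\le v(t)$ (and not just a bound for $u=t$) plays its role through Novikov's criterion.
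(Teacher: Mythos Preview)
Your proof is correct and follows the same approach as the paper: fix $t$, treat $u\mapsto \int_0^u (t-s)^{1/2-H}K_s\,dB_s$ as a true martingale with quadratic variation bounded by $v(t)$, and invoke the classical exponential inequality (the paper cites Revuz--Yor, Chapter~4, Exercise~3.16, while you spell out the exponential-martingale-plus-Markov argument explicitly). One small remark: since $\langle M\rangle_u$ is non-decreasing in $u$, the hypothesis with the supremum is equivalent to just $\langle M\rangle_t\le v(t)$, so your final comment slightly overstates the role of the $\sup$ --- a bound at $u=t$ already gives what Novikov needs.
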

\begin{pf}
For a fixed time $t$, we consider the true martingale $(Z^t_u)_{0\le
u\le t}$ defined by
\[
Z^t_u = \int_{0}^u
(t-s)^{1/2-H} K_s \,\d B_s .
\]
Here $t$ is consider as a fixed parameter for the martingale $Z^t$.
It holds that
\[
\bigl\langle Z^t\bigr\rangle_u  = \int
_0^u (t-s)^{1-2H} |K_s|^2\,\d s
\le v(t) .
\]
The classical exponential inequality (see \cite{ry}, Exercice 3.16,
Chapter 4) implies the result.
\end{pf}
Now we can prove Theorem \ref{th-dev}.
\begin{pf*}{Proof of Theorem \ref{th-dev}}
The proof is divided in several steps.

\textit{Step \textup{1:} Proof of \textup{\protect\eqref{poo2}}}.
We use the decomposition \eqref{decomp-nw}. Obviously we have the
following estimation
%
\begin{equation}
\label{rh} \bigl|r_{x,h}^{\mathrm{loc}}\bigr|  \le L_x
h^\bb .
\end{equation}
Thanks to the exponential inequality \eqref{inegexp} of Lemma \ref
{exp} we have for any $\zeta>0$
%
\begin{eqnarray}
\label{xi-nw}
&&\mathbf P \bigl( \bigl|\xi_{x,h}(X_t)\bigr|\ge\zeta ,
\A^{\mathrm{\sc
{nw}}}_{t,h} \bigr) \nonumber\\
&&\quad \le\mathbf P \biggl( \int
_0^t (t-s)^{1/2-H} N \biggl(
\frac{X_s -x}h \biggr) \,\d B_s \ge\rho \zeta t^{1-H+\beta} \biggr)
\\
&&\quad \le2\exp \bigl( -\rho^2 (1-H) \zeta^2 t^{2\beta}
\bigr).\nonumber
\end{eqnarray}
By \eqref{decomp-nw}, \eqref{rh} and \eqref{xi-nw} we obtain
\begin{eqnarray*}
\mathbf P \bigl( \bigl| \hat b^{\mathrm{\sc{nw}}}_{t,h}(x)- b(x)\bigr|\ge
L_xh^\bb+ \zeta , \A^{\mathrm{\sc{nw}}}_{t,h}
\bigr) & \le&\mathbf P \bigl( \bigl|\xi_{x,h}(X_t)\bigr|+\bigl|r_{x,h}^{\mathrm{loc}}(X_t)\bigr|
\ge L_x h^\bb+ \zeta , \A^{\mathrm{\sc{nw}}}_{t,h}
\bigr)
\\
&\le&\mathbf P \bigl( \bigl|\xi_{x,h}(X_t)\bigr|\ge\zeta
, \A^{\mathrm{\sc
{nw}}}_{t,h} \bigr)
\\
& \le&2\exp \bigl( -\rho^2 (1-H) \zeta^2
t^{2\beta} \bigr) ,
\end{eqnarray*}
and \eqref{poo2} is proved.

\textit{Step \textup{2:} Proof of \textup{\protect\eqref{eqdidi}}}.
We analyse separately the three terms in the decomposition \eqref
{decomp-n}. We begin with $\xi_{x,h}(X_{t_n})$ and we
write
\begin{eqnarray*}
&& \P \bigl( \bigl|\xi_{x,h}(X_{t_n})\bigr|\ge\zeta , \mathcal
A^{\mathrm
{\sc{nw}}}_{t_n,h} \bigr)
\\
& &\quad= \P \biggl( \biggl\llvert \int_0^{t_n}
\alpha_H (t-s)^{H-1/2} Q^n_s
\,\d B_s \biggr\rrvert \ge\zeta\int_0^{t_n}
\alpha_H^2 Q^n_s \,\d s , \mathcal
A^{\mathrm{\sc{nw}}}_{t_n,h} \biggr)
\\
&&\quad \le\P \biggl( \biggl\llvert \int_0^{t_n}
\alpha_H (t-s)^{H-1/2} Q^n_s
\,\d B_s \biggr\rrvert \ge\zeta \rho t_n^{1-H+\beta}
\biggr).
\end{eqnarray*}
We fix $t_n$ and we consider the martingale $Z^n:=(Z^n_r)_{0\le r\le
t_n}$ defined by
\[
Z^n_r = \int_0^r
\alpha_H (t_n-s)^{H-1/2} Q^n_s
\,\d B_s .
\]
Since $0\le N\le1$, the quadratic variation of the martingale $Z^n$ satisfies:
\begin{eqnarray*}
\bigl\langle Z^n \bigr\rangle_r & =&
\alpha_H^2 \int_0^r
(t_n-s)^{2H-1} \Biggl\{ \sum_{k=0}^{n-1}
(t_n-t_k)^{2-4H} N^2 \biggl(
\frac{X_{t_k} -x}h \biggr) \1_{[t_k{,}t_{k+1})}(s) \Biggr\} \,\d s
\\
& \le& \alpha_H^2 \sum
_{k=0}^{n-1} \int_0^{t_n}
(t_n-s)^{2H-1} (t_n-t_k)^{2-4H}
\1_{[t_k{,}t_{k+1})}(s) \,\d s .
\end{eqnarray*}
When $H>1/2$, it holds $(t_n-s)^{2H-1}\le(t_n-t_k)^{2H-1}$ for $t_k\le
s< t_{k+1}$. Hence
\begin{eqnarray*}
\bigl\langle Z^n \bigr\rangle_{t_n} & \le&
\alpha_H^2 \sum_{k=0}^{n-1}
\int_0^{t_n} (t_n-t_k)^{1-2H}
\1_{[t_k{,}t_{k+1})}(s) \,\d s
\\
& \le&\alpha_H^2 \int_0^{t_n}
(t_n-s)^{1-2H} \,\d s
\\
&\le&\frac{\alpha_H^2}{2-2H} {t_n}^{2-2H}.
\end{eqnarray*}
For the second case when $H<1/2$, the inequality
$t_n-s>t_n-t_{k+1}=t_n-t_k-\Delta$ (valid for $t_k\le s< t_{k+1}$)
implies that
\[
(t_n-t_k)^{2-4H} \le(t_n-s)^{2-4H}
\biggl( 1 + \frac{\Delta
}{t_n-s} \biggr)^{2+4H} \le4 (t_n-s)^{2-4H}
.
\]
Therefore, we obtain
\[
\bigl\langle Z^n \bigr\rangle_{t_n} \le4
\alpha_H^2 \int_0^{t_n}
(t_n-s)^{1-2H} \,\d s = \frac{2  \alpha_H^2}{1-H} t_n^{2-2H}.
\]
By Lemma \ref{exp}, we conclude that
%
\begin{equation}
\label{arghh} \P \bigl( \bigl|\xi_{x,h}(X_{t_n})\bigr|\ge\zeta ,
\mathcal A^n_{t_n,h} \bigr) \le2\exp \biggl( -
\frac{\rho^2  (1-H)  \zeta
^2}{4  \alpha_H^2} t_n^{2\beta} \biggr) .
\end{equation}

Now we study the error term $r_{x,h}^{\mathrm{loc}}(X_{t_n})$ from
\eqref{decomp-n}.
The drift $b$ is Lipschitz by Hypothesis \ref{hypb}(c). So
we have
%
\begin{equation}
\label{r1} \bigl|r_{x,h}^{\mathrm{loc}}(X_{t_n})\bigr|  \le L h .
\end{equation}
The last term $r_{x,h}^{\mathrm{traj}}(X_{t_n})$ is more difficult to
handle. At first, we write
\begin{eqnarray*}
&&\bigl|r_{x,h}^{\mathrm{traj}}(X_{t_n})\bigr|
\\
&&\quad\le \frac{\int_0^{t_n} \sum_{k=0}^{n-1} (t_n-t_k)^{1-2H} N  ( {(X_{t_k} -x)}/h
)   L |X_{s}-X_{t_k}|  \1_{[t_k{,}t_{k+1})}(s) \,\d s}{\int_0^{t_n} Q^n_s \,\d s} .
\end{eqnarray*}
By equation \eqref{eds},
\[
X_s-X_{t_k} = \int_{t_k}^s
b(X_r)\,\d r + B_s^H-B_{t_k}^H
,
\]
and when $|X_{t_k}-x|\le h$ we may write for $t_k\le s< t_{k+1}$:
\begin{eqnarray*}
|X_s-X_{t_k}| & \le &\int_{t_k}^s
\bigl\{ \bigl|b( X_r)-b(X_{t_k})\bigr| + \bigl|b(X_{t_k})-b(x)\bigr|+\bigl|b(x)\bigr|
\bigr\} + \bigl|B_s^H-B_{t_k}^H \bigr|
\\
& \le&\epsilon_n \bigl\{ L h+c_b\bigl(1+|x|^\mm
\bigr) \bigr\} + L \int_{t_k}^s |
X_r-X_{t_k}| \,\d r + \epsilon_n^\h \bigl\|
B^H \bigr\| _{0,t_n,\mathfrak{h}} ,
\end{eqnarray*}
where we have denoted for $0<\h<H$:
\[
\bigl\| B^H \bigr\|_{0,t_n,\mathfrak{h}}=\sup_{0\le r,s\le t_n}
\frac
{|B_s^H-B_r^H|}{|s-r|^\mathfrak{h}} .
\]
When $\epsilon_n$ is small, the Gronwall inequality implies that for
any $t_k\le s< t_{k+1}$:
%
\begin{equation}
\label{nx} |X_s-X_{t_k}|  \le\epsilon_n \bigl
\{ L h+c_b\bigl(1+|x|^\mm\bigr) \bigr\}+
\epsilon_n^\h \bigl\| B^H \bigr\|_{0,t_n,\mathfrak{h}} .
\end{equation}
Therefore,
%
\begin{equation}
\label{rdeux} \bigl|r_{x,h}^{\mathrm{traj}}(X_{t_n}) \bigr|\le
c_{x,h,L} \epsilon_n + L \epsilon_n^\h
\bigl\| B^H \bigr\|_{0,t_n,\mathfrak{h}} ,
\end{equation}
with
%
\begin{equation}
\label{cxhl} c_{x,h,L}=L \bigl\{ L h+c_b
\bigl(1+|x|^\mm\bigr) \bigr\} .
\end{equation}

Now we are able to end the proof of \eqref{eqdidi}.
Starting from the decomposition \eqref{decomp-n}, using the
estimations \eqref{arghh}, \eqref{r1} and \eqref{rdeux}, we obtain
%
\begin{eqnarray}
\label{yu}
&&\mathbf P \bigl( \bigl| \hat b^{\mathrm{\sc{nw}}}_{t_n,h}(x)- b(x)\bigr|
\ge Lh+C_{x,h,L} \epsilon_n + \zeta , \A^{\mathrm{\sc{nw}}}_{t_n,h}
\bigr)
\nonumber
\\
&&\quad \le\mathbf P \bigl( \bigl|\xi_{x,h}(X_{t_n})\bigr|+\bigl|r_{x,h}^{\mathrm
{loc}}(X_{t_n})\bigr|+\bigl|r_{x,h}^{\mathrm{traj}}(X_{t_n})\bigr|
\ge Lh + C_{x,h,L} \epsilon_n+ \zeta , \A^{\mathrm{\sc{nw}}}_{t_n,h}
\bigr)
\nonumber
\\
&&\quad \le\mathbf P \bigl( \bigl|\xi_{x,h}(X_{t_n})\bigr|+L
h+C_{x,h,L} \epsilon _n + L \epsilon_n^\h \bigl\| B^H
\bigr\|_{0,t_n,\mathfrak{h}} \ge Lh + C_{x,h,L} \epsilon_n+ \zeta,\A^{\mathrm{\sc
{nw}}}_{t_n,h} \bigr)\qquad
\\
&&\quad\le\mathbf P \bigl( \bigl|\xi_{x,h}(X_{t_n})\bigr|\ge
\zeta/2 , \A ^{\mathrm{\sc{nw}}}_{t_n,h} \bigr) + \mathbf P \bigl( L
\epsilon_n^\h \bigl\| B^H \bigr\|_{0,t_n,\mathfrak
{h}}\ge
\zeta/2 \bigr)
\nonumber
\\
& &\quad\le2\exp \biggl( -\frac{\rho^2  (1-H)  \zeta^2}{16
\alpha_H^2} t_n^{2\beta}
\biggr)+ \mathbf P \biggl( \bigl\| B^H \bigr\| _{0,t_n,\mathfrak{h}}\ge
\frac{\zeta}{2 L \epsilon_n^\h
} \biggr) .\nonumber
\end{eqnarray}
We treat the last term in the right-hand side of the above inequality.
We need a Fernique's type lemma for the exponential moment of the H\"
{o}lder norm of the trajectories of the fBm $B^H$.
Such a result is stated in Lemma \ref{fernique} in the Appendix \ref{app1}.
Chebyshev's exponential inequality yields
%
\begin{eqnarray}
\label{yu2} \mathbf P \biggl( \bigl\| B^H \bigr\|_{0,t_n,\mathfrak{h}}\ge
\frac
{\zeta}{2 L \epsilon_n^\h} \biggr) & \le&\exp \biggl( -\frac{\zeta}{2 L \epsilon_n^\h} \biggr) \E \bigl(
\exp \bigl(\bigl\| B^H \bigr\|_{0,t_n,\mathfrak{h}} \bigr) \bigr)
\nonumber
\\[-8pt]
\\[-8pt]
\nonumber
& \le& c_{H,\mathfrak{h}} \bigl(1+t_n^{H-\mathfrak{h}} \bigr) \exp
\biggl( \frac{128 H^2}{\mathfrak{h}^2} t_n^{2(H-\mathfrak{h})} -\frac{\zeta}{2 L \epsilon_n^\h}
\biggr) ,
\end{eqnarray}
where we have used \eqref{esti-exp} from Lemma \ref{fernique}.
We recall that $ \epsilon_n\asymp n^{-(\gamma-1)/\gamma}$ where $n$
is the number of approximation points satisfying
$n=t_n^\gamma$ with $\gamma>0$.
We may write \eqref{yu2} as
\begin{eqnarray*}
&&\mathbf P \biggl( \bigl\| B^H \bigr\|_{0,t_n,\mathfrak{h}}\ge\frac
{\zeta}{2 L \epsilon_n^\h}
\biggr)\\
&&\quad \le c_{H,\mathfrak{h}} \bigl(1+t_n^{H-\mathfrak{h}} \bigr)\exp \biggl( -\frac{\zeta}{2 L} t_n^{\h
(\gamma-1)} \biggl(
1 - \frac{256 H^2  L}{\zeta \mathfrak{h}^2} t_n^{2(H-\mathfrak{h})} t_n^{\h(1-\gamma)}
\biggr) \biggr). 
\end{eqnarray*}
If we choose $\h$ such that $H>\h> 2H/(\gamma+1)$, then
$2H-\mathfrak{h}-\gamma\h<0$.
For simplicity, we fix
\[
\h_0 =\frac{H}2 + \frac{H}{\gamma+1} = \frac{H(\gamma
+3)}{2(\gamma+1)}
.
\]
When $t_n$ is large, more precisely:
%
\begin{equation}
\label{tau0} t_n\ge\tau_0:= \biggl( \frac{4608 L}{\zeta}
\biggr)^{{2}/{(H(\gamma-1))}}\vee1 ,
\end{equation}
we have
\[
\biggl( 1 - \frac{256 H^2  L}{\zeta \mathfrak{h}_0^2} t_n^{2(H-\mathfrak{h}_0)}
t_n^{\h_0(1-\gamma)} \biggr) \ge\frac
{1}2
\]
and \eqref{yu2} yields
%
\begin{equation}
\label{yu3} \mathbf P \biggl( \bigl\| B^H \bigr\|_{0,t_n,\mathfrak{h}_0}\ge
\frac
{\zeta}{2 L \epsilon_n^{\h_0}} \biggr)  \le2 c_{H,\mathfrak{h}_0} t_n^{H-\mathfrak{h}_0}
\exp \biggl( -\frac{\zeta}{4 L} t_n^{\h_0(\gamma-1)} \biggr) .
\end{equation}
With $\gamma>1$ we have
\[
\h_0(\gamma-1)= \frac{H(\gamma-1)(\gamma+3)}{2(\gamma+1)}\ge \frac{2H(\gamma-1)}{\gamma+1} .
\]
We report \eqref{yu3} in \eqref{yu} and we deduce \eqref{eqdidi}.

The proof of Theorem \ref{th-dev} is now complete.
\end{pf*}

In the end, we get rid of the conditional result in the next subsection.
\subsection{Consistency of the Nadaraya--Watson estimators}
We start the investigation of the consistency of our estimators by the
following proposition that
is anecdotal but interesting in itself.

The strong consistency of $\hat b^{\mathrm{\sc{nw}}}_{t,h}(x)$ is
naturally related to the almost-sure convergence to $0$
of $\xi_{x,h}(X_t)$ (see the decomposition \eqref{decomp-nw}). If
$H=1/2$, by the strong law of large numbers for martingales,
this convergence will holds as soon as $\int_0^\infty N^2  ( \frac
{X_{t_k} -x}h  )\,\d s =+\infty$ almost-surely.
When $H<1/2$, such a condition will also ensure us the convergence of
the fractional
stochastic term $\xi_{x,h}(X_t)$.
%
\begin{prop}\label{topp}
Under the Hypothesis \textup{\ref{hypb}(a)}, when $H<1/2$ and
%
\begin{equation}
\label{fff} \int_0^{\infty}N^2 \biggl(
\frac{X_s -x}h \biggr) \,\d s=+\infty\qquad \mbox{$\mathbf{P}$-a.s.}
\end{equation}
then the Nadaraya--Watson estimator is strongly consistent:
%
\begin{equation}
\label{s1} \hat b^{\mathrm{\sc{nw}}}_{t,h}(x) \mathop{{\hbox to 50pt{
\rightarrowfill}}}^{\mathbf{P}\mbox{-}a.s.}_{t\to\infty,h\to 0} b(x) .
\end{equation}
\end{prop}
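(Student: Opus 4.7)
By the decomposition \eqref{decomp-nw}, $\hat b^{\text{\sc{nw}}}_{t,h}(x)-b(x) = \xi_{x,h}(X_t) + r^{\mathrm{loc}}_{x,h}(X_t)$, and Hypothesis \ref{hypb}.a) yields $|r^{\mathrm{loc}}_{x,h}(X_t)|\le L_x h^\bb$, which vanishes as $h\to 0$. It is therefore enough to prove that, under \eqref{fff}, the stochastic term $\xi_{x,h}(X_t)=\mathcal{N}_t/D_t$ tends to $0$ almost surely as $t\to\infty$, where I set $K_s = N((X_s-x)/h)$, $\mathcal{N}_t = \alpha_H\!\int_0^t (t-s)^{1/2-H}K_s\,dB_s$ and $D_t = \alpha_H^2\!\int_0^t (t-s)^{1-2H}K_s\,ds$.

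First I would establish the qualitative behavior of $D_t$. Direct differentiation gives $\tfrac{d}{dt}D_t=\alpha_H^2(1-2H)\!\int_0^t(t-s)^{-2H}K_s\,ds\ge 0$, so $t\mapsto D_t$ is nondecreasing. Restricting the integration to $s\le t/2$ and using $K_s\ge K_s^2$ yields $D_t\ge \alpha_H^2(t/2)^{1-2H}V_{t/2}$, where $V_t=\int_0^t K_s^2\,ds$. Assumption \eqref{fff} gives $V_{t/2}\to\infty$ a.s., while $(t/2)^{1-2H}/\log t\to\infty$ (this is where $H<1/2$ is crucial); hence $D_t/\log t\to\infty$ almost surely.

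Next I would handle $\mathcal{N}_t$ through a refined form of Lemma \ref{exp}. For fixed $t$, the process $u\mapsto \alpha_H\!\int_0^u(t-s)^{1/2-H}K_s\,dB_s$ is a Brownian martingale whose bracket at $u=t$ equals $\alpha_H^2\!\int_0^t(t-s)^{1-2H}K_s^2\,ds \le D_t$ (since $K_s^2\le K_s$). A stopping-time truncation of the bracket in Lemma \ref{exp} thus provides, for any $v>0$,
\begin{equation*}
\P\big(|\mathcal{N}_t|\ge z,\ D_t\le v\big)\le 2\exp\!\big(-z^2/(2v)\big).
\end{equation*}
Decomposing the event $\{D_t\ge M\}$ into dyadic shells $\{D_t\in [M 2^k,M 2^{k+1})\}$ and summing yields
\begin{equation*}
\P\big(|\xi_{x,h}(X_t)|\ge \eta,\ D_t\ge M\big)\le \sum_{k\ge 0} 2\exp\!\big(-\eta^2 M\, 2^{k-2}\big)\le C\exp\!\big(-\eta^2 M/4\big).
\end{equation*}

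To conclude, I would fix $\eta>0$ and $c>4/\eta^2$, take $M(t)=c\log t$, and evaluate the bound along the geometric sequence $t_n=2^n$: the first contribution becomes $C\,t_n^{-c\eta^2/4}$ which is summable, while $\{D_{t_n}<c\log t_n\}$ occurs only for finitely many $n$ by the denominator control, so Borel--Cantelli delivers $\xi_{x,h}(X_{t_n})\to 0$ a.s.; the extension to all $t\to\infty$ then follows from the monotonicity of $D_t$ combined with a denser refining grid. The main obstacle is the randomness of the denominator: Theorem \ref{th-dev} cannot be invoked directly because the event $\A^{\text{\sc{nw}}}_{t,h}$ requires the polynomial lower bound $D_t\ge \rho\, t^{1-H+\beta}$, for which \eqref{fff} gives no rate whatsoever. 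The dyadic-layering trick above replaces this polynomial requirement by the much weaker condition $D_t/\log t\to\infty$, which in turn is delivered, only because $H<1/2$, by the amplification factor $(t/2)^{1-2H}$ that boosts the rate-free divergence $V_{t/2}\to\infty$ into an almost-polynomial growth.
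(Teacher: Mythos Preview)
Your concentration-plus-Borel--Cantelli strategy is different from the paper's and is correct up to the geometric grid $t_n=2^n$: the dyadic layering of the bracket is a nice device, and the conclusion $\xi_{x,h}(X_{t_n})\to 0$ a.s.\ is valid. The difficulty is the last sentence. The passage ``the extension to all $t\to\infty$ then follows from the monotonicity of $D_t$ combined with a denser refining grid'' is not a proof, and in fact this is exactly where the real work lies. Monotonicity of $D_t$ controls only the denominator; the numerator $\mathcal N_t=\alpha_H\int_0^t(t-s)^{1/2-H}K_s\,dB_s$ is neither monotone nor a martingale in the variable $t$ (the kernel itself depends on $t$), so there is no off-the-shelf maximal inequality for $\sup_{t\in[t_n,t_{n+1}]}|\mathcal N_t|$. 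And almost-sure convergence along a countable dense set does not propagate to the full half-line for a merely continuous function: think of $t\mapsto \sin(\pi t)g(t)$ which vanishes on integers. To close the gap you would need a genuine uniform control of $\mathcal N_t$ between grid points --- e.g.\ a chaining argument based on the increment bound $\langle \mathcal N_t-\mathcal N_{t'}\rangle\le \alpha_H^2|t-t'|^{1-2H}V_{n+1}$ together with a layer decomposition in $V_{n+1}$ --- and this is substantially more delicate than what you wrote.

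The paper sidesteps this issue entirely by a pathwise argument. Using stochastic Fubini it writes $\mathcal N_t=\alpha_H\alpha\int_0^t(t-s)^{\alpha-1}W_s\,ds$ with $\alpha=1/2-H>0$ and $W_s=\int_0^sK_r\,dB_r$, so that $\xi_{x,h}(X_t)$ factorises as a product $T^1\times T^2$: $T^1$ is a $(t-s)^{\alpha-1}$-weighted average of the ratios $W_s/V_s$, which tend to $0$ a.s.\ by the classical strong law of large numbers for Brownian martingales (this is where \eqref{fff} enters), and a fractional Toeplitz lemma then forces the weighted average to $0$; $T^2$ is shown to stay bounded by Cauchy--Schwarz. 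Because this reasoning is valid for every $t$ along almost every path, no interpolation from a discrete grid is needed. If you want to keep your exponential-inequality route, you must supply the missing uniform-in-$t$ estimate; otherwise the Toeplitz argument is the clean way to obtain continuous-time almost-sure convergence here.
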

Of course when the bandwidth is time dependent with $\lim_{t\to\infty
}h_t=0$, we have
$\lim_{t\to\infty} \hat b^{\mathrm{\sc{nw}}}_{t,h_t}(x)=b(x)$
almost-surely.
The proof Proposition \ref{topp} is based on the following fractional
version of the integral Toeplitz lemma.
%
\begin{lemme}\label{toeplitz}
Let ${\alpha}>0$. Let $(x_t)_{t\ge0}$ be a continuous real function
such that $\lim_{t\to\infty}x_t = x$ and let $(\gamma_t)_{t\ge0}$
be a measurable, positive and bounded function. Then it holds that
\[
\frac{\int_0^t (t-s)^{\alpha-1}  ( \int_0^s\gamma_r \,\d r
) x_s \,\d s }{\int_0^t (t-s)^{\alpha-1}  ( \int_0^s\gamma_r \,\d r
) \,\d s } \mathop{\longrightarrow}_{t\to\infty}x,
\]
provided that $\lim_{t\to\infty} \int_0^t \gamma_s\,\d s =+\infty$.
\end{lemme}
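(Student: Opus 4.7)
Write $\Gamma_s := \int_0^s \gamma_r\,dr$ and denote the denominator by
$D(t) := \int_0^t (t-s)^{\alpha-1} \Gamma_s\,ds$. The identity
$$ \frac{\int_0^t (t-s)^{\alpha-1}\Gamma_s\, x_s\,ds}{D(t)} - x
= \frac{1}{D(t)} \int_0^t (t-s)^{\alpha-1}\Gamma_s\,(x_s - x)\,ds $$
reduces the claim to showing that the right-hand side tends to $0$ as $t\to\infty$.
My plan is the usual Toeplitz-type splitting followed by a quantitative lower bound on $D(t)$.

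Fix $\varepsilon>0$. By continuity and $x_s\to x$, there exists $T_0\ge 0$ such that
$|x_s - x|\le \varepsilon$ for all $s\ge T_0$, and moreover $x_s$ is bounded on
$[0,T_0]$ by some constant $M_0$; note also that $\Gamma_s\le \Gamma_{T_0}$ on $[0,T_0]$
since $\gamma$ is positive. Split $\int_0^t = \int_0^{T_0}+\int_{T_0}^t$. On
$[T_0,t]$, positivity of $(t-s)^{\alpha-1}\Gamma_s$ together with the bound
$|x_s - x|\le \varepsilon$ gives
$$ \int_{T_0}^t (t-s)^{\alpha-1}\Gamma_s |x_s - x|\,ds \;\le\; \varepsilon\, D(t). $$
On $[0,T_0]$, the integrand is bounded by $\Gamma_{T_0}(M_0+|x|)(t-s)^{\alpha-1}$, so a direct computation yields
$$ \int_0^{T_0}(t-s)^{\alpha-1}\Gamma_s|x_s-x|\,ds
\;\le\; \frac{\Gamma_{T_0}(M_0+|x|)}{\alpha}\bigl[t^\alpha - (t-T_0)^\alpha\bigr]
\;=\; O(t^{\alpha-1}) \quad (t\to\infty), $$
the last equality following from the mean value theorem (valid for any $\alpha>0$, since
$(t-\xi)^{\alpha-1}\sim t^{\alpha-1}$ for $\xi\in(0,T_0)$ as $t\to\infty$, in both cases $\alpha\ge 1$ and $\alpha<1$).

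The key remaining step, which is where the hypothesis $\Gamma_t\to\infty$ is used, is to show
$D(t)/t^{\alpha-1}\to\infty$. For any $M>0$, pick $T_M$ with $\Gamma_s\ge M$ for $s\ge T_M$; then
$$ D(t) \;\ge\; M\int_{T_M}^t (t-s)^{\alpha-1}\,ds \;=\; \frac{M}{\alpha}(t-T_M)^\alpha, $$
so $\liminf_{t\to\infty} D(t)/t^\alpha \ge M/\alpha$. Since $M$ is arbitrary, $D(t)/t^\alpha\to\infty$
if $\alpha<1$ and a fortiori $D(t)/t^{\alpha-1}\to\infty$ in all cases. Combining the two
estimates,
$$ \limsup_{t\to\infty}\left|\frac{\int_0^t(t-s)^{\alpha-1}\Gamma_s(x_s-x)\,ds}{D(t)}\right|
\;\le\; \varepsilon + \lim_{t\to\infty}\frac{O(t^{\alpha-1})}{D(t)} \;=\; \varepsilon, $$
and letting $\varepsilon\downarrow 0$ finishes the proof.

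The only delicate point is handling the two regimes $\alpha\ge 1$ and $\alpha<1$ uniformly in the
estimate of $\int_0^{T_0}(t-s)^{\alpha-1}\,ds$, which is cleanly done by the mean value theorem;
everything else is a routine Toeplitz-type argument.
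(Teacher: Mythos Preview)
Your proof is correct and follows the same overall Toeplitz-type strategy as the paper: split the integral at a point $T_0$ beyond which $|x_s-x|\le\varepsilon$, and show the contribution from $[0,T_0]$ is negligible relative to the denominator. The technical handling of that last step differs, however. The paper applies Fubini's theorem to rewrite $D(t)=\alpha^{-1}\int_0^t (t-s)^\alpha\gamma_s\,ds$, then bounds the ratio $\int_0^{T_0}/\int_0^t$ using the boundedness of $\gamma$ in the numerator and the divergence of $\int_0^{t/2}\gamma_r\,dr$ in the denominator. You instead bound the $[0,T_0]$ piece directly via the mean value theorem as $O(t^{\alpha-1})$ and show $D(t)/t^{\alpha-1}\to\infty$ from the monotonicity $\Gamma_s\ge M$ for large $s$. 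Your route is arguably cleaner and, notably, never uses the assumption that $\gamma$ is bounded---only that $\Gamma_s$ is nondecreasing and tends to infinity---so it proves a slightly stronger statement than the paper does.

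One small wording issue: the clause ``so $D(t)/t^\alpha\to\infty$ if $\alpha<1$'' is misleading, since your argument $\liminf D(t)/t^\alpha\ge M/\alpha$ for every $M$ already gives $D(t)/t^\alpha\to\infty$ for \emph{all} $\alpha>0$, from which $D(t)/t^{\alpha-1}=t\cdot D(t)/t^\alpha\to\infty$ follows immediately. This is cosmetic; the mathematics is sound.
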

\begin{pf}
Let $\eps>0$ and $A$ be such that $|x_s-x|<\eps$ for $s>A$. We denote
$C_A=\sup_{s\le A}|x_s-x|$.
By Fubini's theorem
\[
\int_0^t (t-s)^\alpha
\gamma_s \,\d s =\alpha\int_0^t
(t-s)^{\alpha
-1} \biggl( \int_0^s
\gamma_r \,\d r \biggr) \,\d s ,
\]
and we write for $t>A$
%
\begin{eqnarray}
\label{ioio} \biggl\llvert \frac{\int_0^t (t-s)^{\alpha-1}  ( \int_0^s\gamma_r \,\d r
) x_s \,\d s }{\int_0^t (t-s)^{\alpha-1}  ( \int_0^s\gamma
_r \,\d r  ) \,\d s } - x \biggr\rrvert & \le&
\frac{\int_0^t (t-s)^{\alpha
-1}  ( \int_0^s\gamma_r \,\d r  ) | x_s-x | \,\d s }{\int_0^t
(t-s)^{\alpha-1}  ( \int_0^s\gamma_r \,\d r  ) \,\d s }
\nonumber
\\[-8pt]
\\[-8pt]
\nonumber
& \le&\eps+ C_A \frac{\int_0^A (t-s)^{\alpha-1}  ( \int_0^s\gamma_r \,\d r  ) \,\d s }{\int_0^t (t-s)^{\alpha-1}  ( \int_0^s\gamma_r \,\d r  ) \,\d s } .
\end{eqnarray}
Another application of Fubini's theorem implies that
\begin{eqnarray*}
\frac{\int_0^A  ( \int_r^A(t-s)^{\alpha-1} \,\d s  ) \gamma
_r\,\d r }{\int_0^t  ( \int_r^t(t-s)^{\alpha-1} \,\d s  ) \gamma
_r\,\d r } & =& \frac{\int_0^A \gamma_r  [ (t-r)^{\alpha
}-(t-A)^{\alpha} ] \,\d r }{\int_0^t (t-r)^{\alpha}\gamma_r\,\d r}
\\
& \le& \frac{\int_0^A (t-r)^{\alpha}\gamma_r \,\d r }{\int_0^t
(t-r)^{\alpha}\gamma_r\,\d r}
\\
& \le& \frac{A t^{\alpha} ( \sup_{s\ge0}|\gamma_s|  )
}{\int_0^{t/2} (t-r)^{\alpha}\gamma_r \,\d r }
\\
& \le& \frac{A t^{\alpha} ( \sup_{s\ge0}|\gamma_s|
)}{t^{\alpha}\int_0^{t/2} \gamma_r \,\d r }
\end{eqnarray*}
and the last term tends to $0$ as $t\to\infty$. We report this
convergence in \eqref{ioio} and we obtain the result.
\end{pf}
Now we prove \eqref{s1}.
\begin{pf*}{Proof of equation (\ref{s1})}
By \eqref{decomp-nw} and \eqref{rh}, we have
\[
\bigl| \hat b^{\mathrm{\sc{nw}}}_{{t,h}}(x) -b(x)\bigr | \le L_x
h^\bb+ \bigl|\xi_{x,h}(X_t) \bigr| .
\]
Let ${\alpha}=1/2-H>0$. By the stochastic Fubini theorem,
\begin{eqnarray*}
&&\int_0^t(t-s)^{\alpha}N \biggl(
\frac{X_s -x}h \biggr)\,\d B_s\\
&&\quad ={\alpha}\int_0^t
(t-s)^{{\alpha}-1} \biggl( \int_0^s N
\biggl( \frac{X_r -x}h \biggr) \,\d B_r \biggr) \,\d s
\end{eqnarray*}
and we write $\xi_{x,h}(X_t) = T^1_{x,h}(X_t)\times T^2_{x,h}(X_t)$ with
\begin{eqnarray*}
T^1_{x,h}(X_t)  &=& \biggl(\alpha\int_0^t (t-s)^{{\alpha}-1} \biggl(
\int_0^s N^2  ( {(X_r -x)}/h  ) \,\d r \biggr)
\biggl({\biggl(\int_0^s N  ( {(X_r -x)}/h  ) \,\d B_r\biggr)}\\
&&\hspace*{185pt}{}\Big/{\biggl(\int_0^s N^2  ( {(X_r -x)}/h  ) \,\d r\biggr)}\biggr) \,\d s\biggr) \\
&&{}\Big/{\biggl(\int_0^t
\alpha_H (t-s)^{{\alpha}-1} \biggl( \int_0^s N^2  (
{(X_r -x)}/h  )\,\d r \biggr) \,\d s\biggr)}
\end{eqnarray*}
and
\begin{eqnarray*}
T^2_{x,h}(X_t)  = \frac{\int_0^t (t-s)^{\alpha}N^2  (
{(X_s -x)}/h  ) \,\d s}{\int_0^t (t-s)^{2{\alpha}} N  ( {(X_s -x)}/h  ) \,\d s } .
\end{eqnarray*}
Since $\int_0^{\infty} N^2  ( \frac{X_s -x}h
) \,\d s = +\infty$ almost-surely,
\[
\frac{\int_0^s N   ( {(X_r -x)}/h  ) \,\d B_r}{\int_0^s N^2   ( {(X_r -x)}/h  ) \,\d r} \mathop{\longrightarrow}^{\mathrm{a.s.}}_{t\to\infty}0
\]
and the generalized Toeplitz Lemma \ref{toeplitz} yields that $\lim_{t\to\infty} T^1_{x,h}(X_t) =0$ almost-surely.
Now we prove that the second term $T^2_{x,h}(X_t)$ is bounded when $t$
is large.
Since the kernel function $N$ satisfies $0\le N^2 \le N$, we have
\begin{eqnarray*}
\bigl\llvert T^2_{x,h}(X_t) \bigr\rrvert & \le&
\frac{\int_0^t (t-s)^{\alpha
}N  ( {(X_s -x)}/h  ) \,\d s}{\int_0^t
(t-s)^{2{\alpha}} N   ( {(X_s -x)}/h  ) \,\d s }
\\
& \le&\frac{ (\int_0^t (t-s)^{2{\alpha}} N   (
{(X_s -x)}/h  ) \,\d s )^{1/2} (\int_0^t N  ( {(X_s -x)}/h  ) \,\d s )^{1/2}}{\int_0^t
(t-s)^{2{\alpha}} N   ( {(X_s -x)}/h  ) \,\d s },
\end{eqnarray*}
where we have used the Cauchy--Schwarz inequality. For $t$ big enough in
such a way that
$\int_0^{t-1} N   ( \frac{X_s -x}h  ) \,\d s \ge2$,
we may write
\begin{eqnarray*}
\bigl\llvert T^2_{x,h}(X_t) \bigr\rrvert
^2 & \le&\frac{\int_0^t N   ( {(X_s -x)}/h
) \,\d s}{\int_0^{t-1} (t-s)^{2{\alpha}} N   ( {(X_s
-x)}/h  ) \,\d s }
\\
& \le&\frac{\int_0^{t-1} N   ( {(X_s -x)}/h
) \,\d s+\int_{t-1}^t N   ( {(X_s -x)}/h
) \,\d s}{\int_0^{t-1} N   ( {(X_s -x)}/h  ) \,\d s }
\\
& \le& 1 +\frac{1}2 .
\end{eqnarray*}
Therefore, $\lim_{t\to\infty}\xi_{x,h}(X_t)=0$ almost-surely and
the proof is completed.
\end{pf*}
%
\begin{remi}Let $M^{(\alpha)}=(M_t^{(\alpha)})_{t\ge0}$ with $\alpha
=1/2-H>0$ be the fractional martingale
(as so called in \cite{hns})
defined by $ M_t^{({\alpha})} = \int_0^t (t-s)^{\alpha} N ( \frac
{X_s -x}h ) \,\d B_s$.
We have seen in the previous proof that \eqref{fff} insures us that
the fractional martingale
$M^{({\alpha})}$ satisfies the strong law of large numbers:
\[
\frac{M_t^{({\alpha})}}{ \prec M^{({\alpha})}\succ_t} \mathop{{\hbox to 25pt{\rightarrowfill}}}^{\mathbf{ P}\mbox{-}\mathrm{a.s.}}_{t\to\infty}0
.
\]
with a ``fractional bracket'' defined by $ \prec M^{({\alpha})}\succ_t
= \int_0^t (t-s)^{2{\alpha}} N^2 ( \frac{X_s -x}h ) \,\d s$.
This is to our knowledge the first result of asymptotic behaviour for
fractional martingales. Unfortunately the technics we employed
to prove this convergence are not adapted to prove a similar result for
a fractional martingale with ${\alpha}<0$. See also \cite{s-expo} for
further discussions on this topic.
\end{remi}

Now we will work under the one-sided dissipative Lipschitz condition
that ensures the ergodic properties of the observation process $X$.
Before stating the main result of this paper, we make the following
non-degeneracy assumption of the stationary solution.
%
\begin{hyp}\label{reg-inv}
The law of $\bar X$ is non-degenerate in a neighbourhood of $x$: for
any small bandwidth $h$ it holds
\[
{\mathbf E} \biggl[N \biggl( \frac{\bar
X-x}{h} \biggr) \biggr] >0 .
\]
\end{hyp}
%
\begin{remi}\label{rem-reg-inv}
It seems important to understand when the law of the stationary
solution is non-degenerate. This will be certainly the subject of
future works.
It is obviously true if the distribution of $\bar X$ has full support.
Nevertheless, the above hypothesis is satisfied in the case of the
ergodic fractional Ornstein--Uhlenbeck process.
\end{remi}
%
\begin{theo}\label{as}
We assume that Hypotheses \textup{\ref{hypb}(b)} and \textup{(c)} hold true.

When the whole trajectory is observed, the Nadaraya--Watson estimator is
consistent:
%
\begin{equation}
\label{sc1} \hat b^{\mathrm{\sc{nw}}}_{t,h}(x) \mathop{{\hbox to 50pt{
\rightarrowfill}}}_{t\to\infty,h\to
0} b(x) ,\qquad \cases{\mbox{almost-surely when
$H<1/2$;}\vspace*{2pt}\cr
\mbox{in probability when $H>1/2$.} }
\end{equation}
%
Its discretized version is also consistent when we assume that the
number of approximation points satisfies $n\asymp t_n^\gamma$ with
$\gamma>1+\mm^2 H$:
%
\begin{equation}
\label{sc2}  \hat b^{\mathrm{\sc{nw}}}_{t_n,h}(x) \mathop{{\hbox to 50pt{
\rightarrowfill}}}^{\mathrm{in\ probability}}_{n\to\infty,h\to
0} b(x) .
\end{equation}
\end{theo}
We observe that the number of approximation points depends on the
regularity of $b$ in the above result. This has already been
discussed in Remark \ref{talk}.
\begin{pf*}{Proof of Theorem \ref{as}}
In the following arguments, we will make use of Proposition
\ref{ergo} with
$\ffi=N$ or $\ffi= N^2$. Obviously, \eqref{hyp-ffi} is satisfied
with $\ppp=0$.

\textit{Step \textup{1:} Proof of \textup{\eqref{sc1}}}.
When $H<1/2$, by \eqref{ergo-lim1} we obtain
\[
\frac{1}t \int_0^t N^2
\biggl( \frac{X_s -x}h \biggr) \,\d s \mathop{{\hbox to 25pt{\rightarrowfill}}}^{\P\mbox{-}\mathrm{a.s.}}_{t\to\infty}
{\mathbf E} \biggl(N^2 \biggl( \frac{\bar X-x}{h} \biggr) \biggr) .
\]
By Cauchy--Schwarz inequality and Hypothesis \ref{reg-inv}
\[
{\mathbf E} \biggl[N^2 \biggl( \frac{\bar X-x}{h} \biggr) \biggr]\ge
\biggl( {\mathbf E} \biggl[N \biggl( \frac{\bar X-x}{h} \biggr) \biggr]
\biggr)^{1/2} >0 ,
\]
and \eqref{fff} is satisfied. Thereby
\eqref{sc1} is a consequence of \eqref{s1} from Proposition \ref{topp}.

Now $H>1/2$ and let $\eps>0$. We use the probability deviation bound
\eqref{poo2} with $\zeta=t^{-\beta/2}$. For $t$ large enough and
small $h$, we have
\begin{eqnarray*}
\mathbf P \bigl( \bigl| \hat b^{\mathrm{\sc{nw}}}_{t,h}(x)- b(x) \bigr|\ge \eps
\bigr) &\le&\mathbf P \bigl( \bigl| \hat b^{\mathrm{\sc{nw}}}_{t,h}(x)- b(x) \bigr|\ge
Lh^\bb+ t^{-\beta/2} \bigr)
\\
& \le&\mathbf P \bigl( \bigl| \hat b^{\mathrm{\sc{nw}}}_{t,h}(x)- b(x) \bigr|\ge
Lh^\bb+ t^{-\beta/2} , \A^{\mathrm{\sc
{nw}}}_{t,h} \bigr) +
\P \bigl( \Omega\setminus\A^{\mathrm{\sc
{nw}}}_{t,h} \bigr)
\\
&\le&2\exp \bigl( -\rho^2 (1-H) t^{\beta} \bigr) + \P \bigl(
\Omega\setminus\A^{\mathrm{\sc{nw}}}_{t,h} \bigr).
\end{eqnarray*}
The consistency \eqref{sc1} will be proved as soon as
%
\begin{eqnarray}
\label{ant} \P \bigl( \Omega\setminus\A^{\mathrm{\sc{nw}}}_{t,h} \bigr)=\P
\biggl( \int_0^t \alpha_H
(t-s)^{1-2H} N \biggl( \frac{X_s -x}h \biggr) \,\d s \le\rho
t^{1-H+\beta} \biggr) \mathop{{\hbox to 50pt{\rightarrowfill}}}_{t\to\infty,h\to0}0
.
\end{eqnarray}
Since $(t-s)^{1-2H}\ge t^{1-2H}$,
\begin{eqnarray*}
\Omega\setminus\A^{\mathrm{\sc{nw}}}_{t,h} &\subseteq& \biggl\{ \int
_0^t N \biggl( \frac{X_s -x}h \biggr) \,\d s
\le \frac{\rho}{\alpha_H} t^{H+\beta} \biggr\}\\
&\subseteq& \biggl\{
\frac{1}t \int_0^t N \biggl(
\frac{X_s -x}h \biggr) \,\d s \le \frac{\rho}{\alpha_H} t^{H+\beta-1} \biggr\} :=
\mathcal A_{t,h} .
\end{eqnarray*}
By the ergodic result \eqref{ergo-lim1},
\[
\frac{1}t \int_0^t N \biggl(
\frac{X_s -x}h \biggr) \,\d s \mathop{{\hbox to 25pt{\rightarrowfill}}}^{\P\mbox{-}\mathrm{a.s.}}_{t\to\infty}
{\mathbf E} \biggl[N \biggl( \frac{\bar X-x}{h} \biggr) \biggr] >0
\]
thus $\1_{\mathcal A_{t,h}}$ tends to $0$ almost-surely when we choose
$\beta$ such that $H+\beta-1<0$. This implies~\eqref{ant} and~\eqref
{sc1} is proved.

\textit{Step \textup{2:} Consistency under discrete observations}.
When $H>1/2$, the proof is identical to the above one. We use the
probability deviation bound \eqref{eqdidi} instead of \eqref{poo2}
and the discrete ergodic property \eqref{ergo-lim2} is invoked in
place of \eqref{ergo-lim1}.

When $H<1/2$, we use the deviation bound \eqref{eqdidi} and it remains
to prove that
\begin{eqnarray*}
\P \bigl( \Omega\setminus\A^{\mathrm{\sc{nw}}}_{t_n,h} \bigr)=\P \Biggl(
\frac{\alpha_H}n \sum_{k=0}^{n-1}
(t_n-t_k)^{1-2H} N \biggl( \frac{X_{t_k} -x}h
\biggr) \le \rho t_n^{1-H+\beta} \Biggr) \mathop{{\hbox to 50pt{
\rightarrowfill}}}_{n\to\infty,h\to0}0 .
\end{eqnarray*}
Let $m$ be such that $t_{m-1}<\frac{t_n}2\le t_m$. Since $s\mapsto
(t_n-s)^{1-2H}$ is a decreasing function,
$(t_n-t_k)^{1-2H}\ge(t_n-\frac{t_n}2)^{1-2H} = (\frac{t_n}2)^{1-2H}$
for $k\le m-1$ and we deduce
\begin{eqnarray*}
&&\int_{0}^{t_n} \sum_{k=0}^{n-1}(t_n-t_k)^{1-2H}
N  \biggl( \frac{X_{t_k} -x}h \biggr) \1_{[t_k{,}t_{k+1})}(s) \,\d s
\\
&&\quad \ge \biggl( \frac{t_n}2 \biggr)^{1-2H} \int
_{0}^{t_n} \sum_{k=0}^{m-1}
N \biggl( \frac{X_{t_k} -x}h \biggr) \1 _{[t_k{,}t_{k+1})}(s) \,\d s
\\
&&\quad \ge \biggl( \frac{t_n}2 \biggr)^{1-2H} \int
_{0}^{ {t_n}/2} \sum_{k=0}^{m-1}
N \biggl( \frac{X_{t_k} -x}h \biggr) \1_{[t_k{,}t_{k+1})}(s) \,\d s
\\
&&\quad \ge \biggl( \frac{t_n}2 \biggr)^{1-2H} \int
_{0}^{{t_n}/2} \sum_{k=0}^{m-1}
N \biggl( \frac{X_{\tilde t_k}
-x}h \biggr) \1_{[\tilde t_k{,}\tilde t_{k+1})}(s) \,\d s ,
\end{eqnarray*}
where $\tilde t_k = t_k $ for $k\le m-1$ and $\tilde t_m = t_n/2$. We
notice that the mesh size of this new time
discretization is less that $\epsilon_n$. The observation times are no
more equally spaced but it is easy to convince ourselves
that it does not affect the results of Proposition \ref{ergo}. Thereby
\begin{eqnarray*}
\Omega\setminus\A^{\mathrm{\sc{nw}}}_{t_n,h} &\subseteq& \Biggl\{ \biggl(
\frac{t_n}2 \biggr)^{1-2H} \int_{0}^{{t_n}/2}
\sum_{k=0}^{m-1} N \biggl(
\frac{X_{\tilde t_k} -x}h \biggr) \1_{[\tilde t_k{,}\tilde t_{k+1})}(s) \,\d s \le \frac{\rho
}{\alpha_H}
t_n^{1-H+\beta} \Biggr\}
\\
& \subseteq& \Biggl\{ \frac{1}{\tilde t_m} \int_{0}^{\tilde t_m}
\sum_{k=0}^{m-1} N \biggl(
\frac{X_{\tilde t_k} -x}h \biggr) \1_{[\tilde t_k{,}\tilde t_{k+1})}(s) \,\d s \le \frac{4\rho
}{\alpha_H} (\tilde
t_m)^{H+\beta-1} \Biggr\}
\end{eqnarray*}
and accordingly of the inequality \eqref{ergo-lim2} from Proposition
\ref{ergo}, we may conclude as in the first step.
\end{pf*}
%
\section{The locally linear estimate}\label{loclin}
We follow the same structure than Section \ref{nada}. Nevertheless,
since the notation become more heavy,
we distinguish the case of continuous observations from the discrete one.
\subsection{Heuristic approach}
The idea is similar to the one used Section \ref{nada} and also
follows the one developed in \cite{spo}.
At first we discuss the case of a linear drift coefficient $b$ of the form
$b_{\theta_0,\theta_1}(z) = \theta_0 + \theta_1(z-x)/h$. Hence, $b$
depends on two
parameters $\theta_0$ and $\theta_1$ ($h>0$ is fixed). Since
$b_{\theta_0,\theta_1}(x) = \theta_0$,
an estimator of $\theta_0$ is an estimator of the value of the drift
at the point $x$.
We denote
\[
X^{\theta}_t = x_0 +\int_0^t
b_{\theta_0,\theta_1}\bigl(X^\theta_s\bigr)\,\d s +
B_t^H.
\]
Similarly to \eqref{YM2}, we introduce the observable process
$Y^\theta=(Y^\theta_t)_{t\ge0}$ defined by
\begin{eqnarray*}
Y^\theta_t & = &x_0 + \int_0^t
\tilde w_H(t,s) b_{\theta_0,\theta
_1}\bigl(X^\theta_s
\bigr) \,\d \bigl\langle M^H\bigr\rangle_s +
M^H_t
\\
& =& x_0 + \int_0^t
\rho_s^{\mathrm{T}} \times\pmatrix{
\theta_0
\vspace*{2pt}\cr
\theta_1} \,\d \bigl\langle M^H\bigr
\rangle_s + M^H_t ,
\nonumber
\end{eqnarray*}
with $\rho=(\rho_s)_{s\ge0}$ is the process with values in $\mathbf
R^2$ defined by
\[
\rho_s = \tilde w_H(t,s)\pmatrix{ 1
\vspace*{2pt}\cr
\bigl(X^\theta_s-x\bigr)/h},
\]
and for a matrix $A$, $A^{\mathrm{T}}$ denotes its transpose.
Intuitively, the values $\theta_0$ and $\theta_1$ can
be estimated by the least squares method (see, e.g., \cite{lbm}).
If the $2\times2$-matrix
\[
\Pi_t=\int_0^t \rho_s
\rho_s^{\mathrm{T}} \,\d \bigl\langle M^H\bigr
\rangle_s
\]
is not singular,
the least squares estimator of $(\theta_0,\theta_1)^{\mathrm{T}}$
obtained at time $t$ is given by
\[
\hat\theta(t) = \pmatrix{ \hat
\theta_0(t)
\vspace*{2pt}\cr
\hat\theta_1(t)} = \Pi_t^{-1} \int
_{0}^t \rho_s \,\d Y^\theta_s
.
\]
With the constant $\alpha_H$ defined in \eqref{alfa}, we may write
\begin{eqnarray*}
\Pi_t & =& \int_0^t \bigl( \tilde
w_H (t,s) \bigr)^2 \pmatrix{ 1 & \bigl(X^\theta_s-x\bigr)/h
\vspace*{2pt}\cr
\bigl(X^\theta_s-x\bigr)/h & \bigl(X^\theta_s-x
\bigr)^2/h^2} \,\d \bigl\langle M^H\bigr
\rangle_s
\\
& =& \int_0^t
\alpha_H^2(t-s)^{1-2H} \pmatrix{ 1 & \bigl(X^\theta_s-x\bigr)/h
\vspace*{2pt}\cr
\bigl(X^\theta_s-x\bigr)/h & \bigl(X^\theta_s-x
\bigr)^2/h^2} \,\d s %
\end{eqnarray*}
and we obtain the following expression of $\hat\theta_0(t)$:
\[
\hat\theta_0(t)  = \frac{m_2(t)}{\delta(t)} \int_0^t
\tilde w_H(t,s) \,\d Y^\theta_s - \frac{m_{1}(t)}{\delta(t)}
\int_0^t \tilde w_H(t,s) \biggl(
\frac{X_s^\theta-x}h \biggr) \,\d Y^\theta_s , 
\]
where for $i\in\{0,1,2\}$:
\[
m_i(t) = \int_0^t
\alpha_H^2(t-s)^{1-2H} \biggl( \frac
{X_s^\theta-x}h
\biggr)^i \,\d s
\]
and
\[
\delta(t) = m_0(t)m_2(t)-m_1^2(t).
\]
In the context of the fractional diffusion \eqref{eds}, the drift
$b$ is not linear. Hence, we approximate it by a linear function
$\theta_0 + \theta_1(z-x)/h$ in a neighbourhood $[x-h,x+h]$ of the
point $x$. For this purpose, we use a kernel function $N$ that
satisfies Hypothesis \ref{kernel}.
The above discussion is the starting point of the construction of a
locally linear estimator of $b$ under continuous observations.
\subsection{Observations based on the whole trajectory}
\subsubsection{Construction and decomposition of the error}
We give the following definition of a locally linear estimator of $b(x)$
by means of the observable processes $Y$ and $X$.
%
\begin{defi}\label{def2}
The locally linear estimators at time $t$ of $b(x)$ with the kernel $N$
and a bandwidth $h$ is defined by
%
\begin{eqnarray}
\label{Yhatb} \hat b^{\mathrm{ll}}_{{t,h}}(x) & =& \frac{\v_2(t)}{\d(t)}
\int_0^t \tilde w_H(t,s) N \biggl(
\frac{X_s -x}h \biggr) \,\d Y_s
\nonumber
\\[-8pt]
\\[-8pt]
\nonumber
&&{} - \frac{\v_{1}(t)}{\d(t)} \int
_0^t \tilde w_H(t,s) \biggl(
\frac{X_s -x}h \biggr) N \biggl( \frac{X_s -x}h \biggr) \,\d Y_s
\\
\label{defhatb} & =& \int_0^t \biggl[
\frac{\v_2(t)}{\d(t)} - \frac{\v_{1}(t)}{\d
(t)} \biggl( \frac{X_s -x}h \biggr) \biggr]
\alpha_H^2 (t-s)^{1-2H} N \biggl(
\frac{X_s -x}h \biggr) \,\d X_s ,
\end{eqnarray}
where for $j=0,1,2$:
%
\begin{equation}
\label{vv}  \cases{
\displaystyle\v_j(t)  = \int_0^t
\alpha_H^2 (t-s)^{1-2H} \biggl( \frac{X_s -x}h
\biggr)^j N \biggl( \frac
{X_s -x}h \biggr) \,\d s,
\vspace*{2pt}\cr
\d(t)  = \v_0(t) \v_2(t) - \bigl(\v_{1}(t)
\bigr)^2 . }
\end{equation}
\end{defi}
The alternative expression \eqref{defhatb} is obtained thanks to the
definition of the process $Y$ given in~\eqref{Y}
and the relation
\[
w_H(t,s) \tilde w_H(t,s) = \alpha_H^2
(t-s)^{1-2H}\qquad \mbox{for $t>s$} .
\]

Moreover, the representation \eqref{YB}, the facts that $N\le1$ and
that for all $z\in\mathbf R$, $|zN(z)|\le1$, we notice that the
stochastic integrals in \eqref{Yhatb} are well defined.
Moreover, we remark that $\d(t)>0$ by the Cauchy--Schwarz inequality.

In order to understand what kind of quantities will appear in the
deviation probability bound, we write a
decomposition of the error $ \hat b^{\mathrm{ll}}_{{t,h}}(x)-b(x)$.
Using \eqref{YB} and \eqref{Yhatb}, we rewrite $\hat b^{\mathrm
{ll}}_{t,h}(x)$ as
\begin{eqnarray*}
\hat b^{\mathrm{ll}}_{t,h}(x) & = &\frac{\v_2(t)}{\d(t)} \int
_0^t \alpha_H (t-s)^{1/2-H}
N \biggl( \frac{X_s -x}h \biggr) \,\d B_s
\\
&&{} - \frac{\v_{1}(t)}{\d(t)} \int_0^t
\alpha_H (t-s)^{1/2-H} \biggl( \frac{X_s -x}h \biggr) N
\biggl( \frac{X_s -x}h \biggr) \,\d B_s
\\
&&{} + \int_0^t \biggl[\frac{\v_2(t)}{\d(t)} -
\frac{\v_{1}(t)}{\d
(t)} \biggl( \frac{X_s -x}h \biggr) \biggr]
\alpha_H^2 (t-s)^{1-2H} N \biggl(
\frac{X_s -x}h \biggr) b(X_s) \,\d s .
\end{eqnarray*}
Now we consider the local error functions $\delta_{x,h}$ defined by
\[
\delta_{x,h}(z)  = b(z)- \bigl( b(x)+ b'(x)\times(z-x)
\bigr).
\]
By the definitions of the functions $\v_j$ it holds that
\begin{eqnarray*}
&& \int_0^t \biggl[\frac{\v_2(t)}{\d(t)} -
\frac{\v
_{1}(t)}{\d(t)} \biggl( \frac{X_s -x}h \biggr) \biggr]
\alpha_H^2 (t-s)^{1-2H} N \biggl(
\frac{X_s -x}h \biggr) b(X_s) \,\d s
\\
&&\quad = b(x) + \int_0^t \biggl[\frac{\v_2(t)}{\d(t)}
- \frac{\v_{1}(t)}{\d(t)} \biggl( \frac{X_s -x}h \biggr) \biggr]
\alpha_H^2 (t-s)^{1-2H} N \biggl(
\frac{X_s -x}h \biggr) \delta_{x,h}(X_s) \,\d s .
\end{eqnarray*}
Now for $j=0,1$ we denote
\begin{eqnarray*}
\nu_j(t,s) & = &\alpha_H (t-s)^{1/2-H} \biggl(
\frac{X_s -x}h \biggr)^j N \biggl( \frac{X_s -x}h \biggr),
\\
\tilde\nu_j(t,s) & =& \alpha_H^2
(t-s)^{1-2H} \biggl( \frac
{X_s -x}h \biggr)^j N \biggl(
\frac{X_s -x}h \biggr)
\end{eqnarray*}
and $(\mathrm{V}_t)_{t\ge0}$ is the process with values in $\mathbf
R^{2\times2}$ defined by
%
\begin{eqnarray}
\label{def-v} \mathrm{V}_t = \pmatrix{ \v_0(t) & \v_{1}(t)
\vspace*{2pt}\cr
\v_{1}(t) & \v_2(t)} .
\end{eqnarray}
Thus, we have again the expression
%
\begin{equation}
\label{decomp} \hat b^{\mathrm{ll}}_{t,h}(x) = b(x) +
\xi_{x,h}^1(X_t) + r_{x,h}^\mathrm{loc}(X_t),
\end{equation}
where $\xi_{x,h}^1(X_t)$ and $r_{x,h}^\mathrm{loc}(X_t)$ are the
first components of the following two dimensional vectors
\begin{eqnarray*}
\Xi_{x,h}(X_t) & 
= &(\mathrm{V}_t)^{-1}
\int_0^t \pmatrix{ \nu_0(t,s)
\vspace*{2pt}\cr
\nu_1(t,s)} \,\d B_s,
\\
R_{x,h}^\mathrm{loc}(X_t) & 
= &(
\mathrm{V}_t)^{-1} \int_0^t
\pmatrix{ \tilde\nu_0(t,s)
\vspace*{2pt}\cr
\tilde\nu_1(t,s)}\delta_{x,h}(X_s)\,\d s.
\end{eqnarray*}
The interpretation of $\xi_{x,h}^1(X_t)$ and $r_{x,h}^\mathrm
{loc}(X_t)$ and is the same one than in the Nadaraya--Watson procedure.
It is important to notice that when $H=1/2$ we obtain the same
decomposition of $\hat b^{\mathrm{ll}}_{t,h}(x)-b(x)$ as the one in
\cite{spo}, equation (5.3).
\subsubsection{Deviation probability and consistency}
In view of the term $r_{x,h}^\mathrm{loc}(X_t)$ in \eqref{decomp},
the accuracy of the locally linear estimate
will be expressed thanks to the quality of the approximation of $b$ by
a linear function.
Under Hypothesis \ref{hypb}(a) it is natural to
introduce in the neighbourhood $[x-h,x+h]$ of the point $x$ the quantity
\[
\Delta_{x,h} =\sup_{|z-x|\le h} \bigl| b(z)- \bigl( b(x)+
b'(x)\times(z-x) \bigr) \bigr|.
\]
In order to study the error from a probabilistic point of view (see
$\xi_{x,h}^1(X_t)$ in \eqref{decomp}),
we make the following comments.
If the kernel function $N$ satisfies $N^2=N$ and if $H=1/2$, the
process $V=(\mathrm{V}_t)_{t\ge0}$
defined in \eqref{def-v} is the quadratic variation process of the
two-dimensional martingale $M=(M_t)_{t\ge0}$ defined by
\[
M_t = \int_0^t \pmatrix{ 1
\vspace*{2pt}\cr
\displaystyle\frac{X_s -x}h} N \biggl( \frac{X_s -x}h \biggr)
\,\d B_s .
\]
If we investigate the strong consistency of our estimator we shall
use a strong law of large numbers for multivariate martingales (see
\cite{lbm,ku-so,cp,vz}). Therefore, the strong
consistency is a consequence of asymptotic properties as $t$ goes to
infinity of the eigenvalues of
the matrix $\mathrm{V}_t$. In the fractional framework, this kind of
asymptotic has not yet been studied.
Nevertheless, the eigenvalues of $\mathrm{V}_t$ play a
crucial role in the following.

Thus, we introduce for some $\rho>0$ and $\beta>0$ the random set
\[
\A^{\mathrm{ll}}_{t,h} = \bigl\{ \lambda_m(t)\ge\rho
t^{1-H+\beta
} \bigr\},
\]
where $\lambda_m(t)$ is the smallest eigenvalue of the matrix $V_t$.
The properties of
$\hat b^{\mathrm{ll}}_{t,h}(x)$ are first studied restricted to the
event $\A^{\mathrm{ll}}_{t,h}$.
The consistency is proved under Hypotheses \ref{hypb}(b) and (c) insuring the ergodicity of \eqref{eds} and under the
following non-degeneracy condition on the law of the stationary solution.
%
\begin{hyp}\label{reg-inv-ll}
The law of $\bar X$ is (strongly) non-degenerate in a neighbourhood of
$x$: for any small bandwidth $h$ it holds:
\[
{\mathbf E} \biggl[ \biggl( \frac{\bar
X-x}{h} \biggr)^2 N \biggl(
\frac{\bar X-x}{h} \biggr) \biggr] >0 .
\]
\end{hyp}
Notice that we use the terminology ``strongly non-degenerate'' because
Hypothesis \ref{reg-inv-ll} implies Hypothesis \ref{reg-inv}.
%
\begin{theo}\label{th-dev-ll}Let $x$ be fixed.
\begin{longlist}[(ii)]
\item[(i)]
If $b$ satisfies Hypothesis \textup{\ref{hypb}(a)}, then for any $\zeta>0$, we have
%
\begin{eqnarray}
\label{poo}  \mathbf P \bigl( \bigl| \hat b^{\mathrm{ll}}_{t,h}(x)- b(x) \bigr|
\ge c_{\rho,H} \Delta_{x,h} t^{1-H-\beta} + \zeta , \A
^{\mathrm{ll}}_{t,h} \bigr) \le 4\exp \biggl( - \frac{(1-H)\rho^2}{2\alpha_H^2}
\zeta^2 t^{2\beta} \biggr),
\end{eqnarray}
with $c_{\rho,H}=\sqrt{2} c_H^2 /(\rho\lambda_H )$.
\item[(ii)]
When we assume Hypotheses \textup{\ref{hypb}(b)} and \textup{(c)} and Hypothesis~\ref
{reg-inv-ll}, the fractional diffusion is then ergodic and we have the
consistency of the locally linear estimator:
\[
 \hat b^{\mathrm{ll}}_{t,h}(x) \mathop{{\hbox to 50pt{
\rightarrowfill}}}^{\mathrm{in\ probability}}_{t\to\infty,h\to
0} b(x) .
\]
\end{longlist}
\end{theo}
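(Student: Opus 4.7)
The plan is to adapt the two-step strategy of Theorems \ref{th-dev} and \ref{as} to the $2\times 2$ matrix setting. For part \textbf{\ref{th-dev-ll}.i)}, I start from the decomposition \eqref{decomp} and control the bias and variance separately. The bias term $r_{x,h}^{\mathrm{loc}}(X_t)$ is the first component of $R_{x,h}^{\mathrm{loc}}(X_t) = (\mathrm{V}_t)^{-1} \int_0^t (\tilde\nu_0(t,s), \tilde\nu_1(t,s))^{\mathrm T} \delta_{x,h}(X_s)\, ds$. Using $|\delta_{x,h}(X_s)| \le \Delta_{x,h}$ on the support of $N((X_s-x)/h)$ and the fact that both $N(z)$ and $z N(z)$ are bounded by $1$ on $[-1,1]$, we have $|\tilde\nu_j(t,s)| \le \alpha_H^2 (t-s)^{1-2H}$ for $j=0,1$, so the Euclidean norm of the vector integral is at most $\sqrt{2}\, \alpha_H^2 \Delta_{x,h}\, t^{2-2H}/(2-2H)$. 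Combined with the operator-norm bound $\|\mathrm{V}_t^{-1}\|_{\mathrm{op}} \le 1/\lambda_m(t) \le 1/(\rho\, t^{1-H+\beta})$, valid on $\A^{\mathrm{ll}}_{t,h}$, this produces a pathwise bias bound of the claimed form $c_{\rho,H}\,\Delta_{x,h}\, t^{1-H-\beta}$.

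For the stochastic term $\xi_{x,h}^1(X_t)$, I decompose it as $(\mathrm{V}_t^{-1})_{11}\, I_0 + (\mathrm{V}_t^{-1})_{12}\, I_1$ with $I_j = \int_0^t \nu_j(t,s)\, dB_s$. Each $I_j$ is handled by Lemma \ref{exp} with $v(t) = \alpha_H^2 t^{2-2H}/(2-2H)$ (using again $|N|, |zN| \le 1$), and on $\A^{\mathrm{ll}}_{t,h}$ the entries of $\mathrm{V}_t^{-1}$ are bounded by $1/(\rho\, t^{1-H+\beta})$. A union bound on the two contributions of $\xi_{x,h}^1$ then delivers the factor-$4$ Gaussian bound \eqref{poo}.

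For part \textbf{\ref{th-dev-ll}.ii)}, I select $\zeta = \zeta_t \to 0$ with $\zeta_t^2\, t^{2\beta} \to \infty$ and $h = h_t \to 0$ slowly enough that $\Delta_{x,h_t}\, t^{1-H-\beta} \to 0$; both choices are compatible since Hypothesis \ref{hypb}.b) guarantees $\Delta_{x,h} = o(h)$ as $h \to 0$. The bound \eqref{poo} then forces the conditional error probability to $0$, so it only remains to show $\mathbf P(\Omega \setminus \A^{\mathrm{ll}}_{t,h_t}) \to 0$. Exploiting $\lambda_m(t) \ge \det(\mathrm{V}_t)/\mathrm{tr}(\mathrm{V}_t) = \d(t)/(\v_0(t) + \v_2(t))$ for a positive $2\times 2$ symmetric matrix, I rewrite $\d(t)$ via the Cauchy--Schwarz identity
\begin{equation*}
\d(t) = \tfrac{\alpha_H^4}{2} \int_0^t \int_0^t (t-r)^{1-2H}(t-s)^{1-2H}\, N\bigl(\tfrac{X_r-x}{h}\bigr)\, N\bigl(\tfrac{X_s-x}{h}\bigr)\, \bigl(\tfrac{X_r-X_s}{h}\bigr)^2 dr\, ds.
\end{equation*}
Applying the ergodic property \eqref{ergo-lim1} after splitting $[0,t]$ into $[0,t/2]$ and $[t/2,t]$ (as in the proof of Theorem \ref{as}, to treat $H<1/2$ and $H>1/2$ uniformly) yields $\v_0(t) + \v_2(t) = O(t^{2-2H})$ and $\d(t) \succeq t^{2(2-2H)}$ in probability. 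The latter asymptotic uses Hypothesis \ref{reg-inv-ll} and a strict Cauchy--Schwarz argument to ensure that the limit $\mathbf E[N(\bar Z)]\mathbf E[\bar Z^2 N(\bar Z)] - (\mathbf E[\bar Z N(\bar Z)])^2$, with $\bar Z = (\bar X-x)/h$, is strictly positive. Consequently $\lambda_m(t) \succeq t^{2-2H}$, which dominates $\rho\, t^{1-H+\beta}$ for any $\beta \in (0, 1-H)$, so $\mathbf P(\A^{\mathrm{ll}}_{t,h_t}) \to 1$ and the desired convergence in probability follows.

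The main obstacle will be the quantitative lower bound on $\d(t)$: the weighted double integral falls outside the direct scope of Proposition \ref{ergo}, so a careful splitting argument is required to cover both $H<1/2$ and $H>1/2$ uniformly, and the strict positivity of the limiting Cauchy--Schwarz gap must be extracted cleanly from Hypothesis \ref{reg-inv-ll}. Controlling a $2\times 2$ matrix rather than a scalar denominator amplifies this difficulty relative to the Nadaraya--Watson case.
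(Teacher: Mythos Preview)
For part~\textbf{i)} your argument is essentially the paper's: both start from \eqref{decomp}, bound $|r^{\mathrm{loc}}_{x,h}|$ on $\A^{\mathrm{ll}}_{t,h}$ via $\Delta_{x,h}$, the crude bound on $\v_0(t)$, and $\lambda_m(t)\ge\rho\,t^{1-H+\beta}$, then control $\xi^1_{x,h}$ by a union bound over the two stochastic integrals $I_0,I_1$ and Lemma~\ref{exp}. The only difference is bookkeeping: the paper uses $\|\mathrm V_t^{-1}y\|_\infty\le\sqrt{2}\,\max(|y_1|,|y_2|)/\lambda_m(t)$, giving threshold $\zeta\rho\,t^{1-H+\beta}/\sqrt{2}$ in each tail event and hence the constant $2\alpha_H^2$ in \eqref{poo}; your entrywise bound on $\mathrm V_t^{-1}$ combined with $|\xi^1_{x,h}|\le(|I_0|+|I_1|)/\lambda_m(t)$ yields threshold $\zeta\rho\,t^{1-H+\beta}/2$ and the slightly worse constant $4\alpha_H^2$.

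For part~\textbf{ii)} the routes diverge. The paper asserts $\lambda_m(t)\ge\v_2(t)$ (from $\v_2\le\v_0$ and \eqref{vp}) and then applies Proposition~\ref{ergo} to the \emph{single} integral $\v_2(t)$, so Hypothesis~\ref{reg-inv-ll} enters only as $\mathbf E\big[\bar Z^2 N(\bar Z)\big]>0$ with $\bar Z=(\bar X-x)/h$. Your route through $\lambda_m(t)\ge\d(t)/(\v_0(t)+\v_2(t))$ is heavier---it forces you to lower-bound the double integral for $\d(t)$---but it is arguably the sound one: since $\v_2(t)$ is a diagonal entry of the symmetric matrix $\mathrm V_t$, one always has $\lambda_m(t)\le\v_2(t)$, so the paper's inequality points the wrong way. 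Your flagged obstacle is also genuine: the bare positivity $\mathbf E[\bar Z^2 N(\bar Z)]>0$ does not by itself force a strict Cauchy--Schwarz gap (equality occurs if the law of $\bar X$ on $\{N((\cdot-x)/h)>0\}$ is a single atom away from $x$). However, Hypothesis~\ref{reg-inv-ll} is assumed for \emph{all} small $h$; an atom at distance $d>0$ from $x$ would leave zero mass once $h<d$, contradicting the hypothesis, so the gap is strictly positive for each small $h$ and your determinant argument goes through after the interval-splitting you describe.
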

The proof of this result is postponed in Section \ref{proofs}.

If we consider that $\zeta=t^{-\beta/2}$, \eqref{poo} implies that
\begin{eqnarray*}
\mathbf P \bigl( \bigl| \hat b^{\mathrm{ll}}_{t,h}(x)- b(x) \bigr|\ge
c_{\rho,H} \Delta_{x,h} t^{1-H-\beta} + t^{-\beta/2} ,
\A^{\mathrm{ll}}_{t,h} \bigr) \le4\exp \biggl( -\frac{(1-H)\rho^2}{2c_H^2}
t^{\beta} \biggr).
\end{eqnarray*}
The quality of the approximation of $b$ by a linear function is
measured by $\Delta_{x,h}$ in a vicinity of~$x$.
Under Hypothesis \ref{hypb}(a), we have $\Delta_{x,h}\le2L_xh$.
Assume also that $b$ is twice differentiable in a neighbourhood of $x$
with second derivative bounded by $L_x$,
then $\Delta_{x,h} \le L_xh^2/2$. Now we are able to choose a time
dependent bandwidth $h_t$.
Clearly if $h_t^2 \asymp L_x^{-1} t^{H-1+\beta/2}$ (remind that the
symbol $\asymp$ means that the
ratio of the functions are bounded), we obtain that the rate of
estimation is of
order $t^{-\beta/2}$:
\begin{eqnarray*}
 \mathbf P \bigl( \bigl| \hat b^{\mathrm{ll}}_{t,h}(x)- b(x) \bigr|\ge\bar
c_{\rho,H} t^{-\beta/2} , \A^{\mathrm{ll}}_{t,h} \bigr)
\le4\exp \biggl( -\frac{(1-H)\rho^2}{2c_H^2} t^{\beta} \biggr),
\end{eqnarray*}
where $\bar c_{\rho,H}=c_{\rho,H}/2 +1$ and of course $\beta$
has been chosen such that $\beta<2(H-1)$.

Since $b$ is unknown, we have no reason to have information about
$L_x$. As usual in such a
non-parametric context, we have two choices. On the one hand, we can
restrict our problem to a class of drift
function $b$ satisfying the above hypotheses with constants $L_x$ such
that $L_x\in(L_{\mathrm{min}}{,}L_{\mathrm{max}})$.
On the other hand, an adaptive (data-driven) choice of the bandwidth
may be considered (see \cite{spo} and the references therein).
Unfortunately, the analysis of the error given in Theorem \ref{th-dev}
seems to be not adapted
to this powerful method of bandwidth's choice.

To continue the comparison with the work of Spokoiny (see \cite{spo}),
we may relate our random set
and the one that appears in \cite{spo}. Indeed, very simple
calculations allow us to write
the exact expression of the smallest eigenvalue of the matrix $\mathrm
{V}_t$ as
%
\begin{equation}
\label{vp} \lambda_{m}(t) = \tfrac{1}2 \bigl(
\v_0(t)+ \v_2(t) - \bigl( \bigl( \v _0(t)+
\v_2(t)\bigr)^2 -4 \d(t) \bigr)^{1/2} \bigr) .
\end{equation}
The above expression employs analogous quantities that the one
appearing in the random set $\mathcal A_h$ (see \cite{spo}, page 819).
Despite these analogies, it is not easy to compare the two events.
Moreover, the discussion about the accuracy of the approximation and
the ``stochastic error'' is different from the one
made in \cite{spo}. This is due to the fact that the stochastic error
is hidden in the random set $\A^{\mathrm{ll}}_{t,h}$ whereas
it appears explicitly as a ``conditional variance'' in the work of Spokoiny.

Now we give an effective way to estimate the drift when we consider
discrete observations.

\subsection{Discrete observations}
We consider the discretization of the quantities that are defined in
\eqref{vv}. For $j=0,1,2$,
\begin{eqnarray*}
 \cases{%
\displaystyle \v_j^n(t_n)
 = \int_0^{t_n} \sum
_{k=1}^{n-1}\alpha_H^2
(t_n-t_k)^{1-2H} \biggl( \frac{X_{t_k}
-x}h
\biggr)^j N \biggl( \frac{X_{t_k} -x}h \biggr) \1 _{[t_k,t_{k+1})}(s)\,\d
s,
\vspace*{2pt}\cr
\d^n(t_n)  = \v^n_0(t_n)
\v^n_2(t_n) - \bigl(\v^n_{1}(t_n)
\bigr)^2 }
\end{eqnarray*}
and we denote $V_{t_n}^n$ the matrix
\[
\mathrm{V}^n_{t_n}=\pmatrix{ \v^n_0(t_n) & \v^n_{1}(t_n)
\vspace*{2pt}\cr
\v^n_{1}(t_n) & \v^n_2(t_n)}.
\]
Considering \eqref{defhatb}, we propose the following estimator of
$b(x)$ based on discrete observations.
%
\begin{defi}\label{hatn}
The discretized locally linear estimator at time $t_n$ with a bandwidth
$h$ is
\begin{eqnarray*}
\hat b^{\mathrm{ll}}_{t_n,h}(x)  = \sum_{k=1}^{n-1}
\biggl[\frac{\v
^n_2(t_n)}{\d^n(t_n)} - \frac{\v^n_{1}(t_n)}{\d^n(t_n)} \biggl( \frac{X_{t_k} -x}h \biggr)
\biggr] \alpha_H^2 (t_n-t_k)^{1-2H}
N \biggl( \frac{X_{t_k} -x}h \biggr) (X_{t_{k+1}}-X_{t_k}) .
\end{eqnarray*}
\end{defi}
As in \eqref{decomp-n}, We will decompose $\hat b^{\mathrm
{ll}}_{t_n,h}(x)$ into a sum of three terms:
%
\begin{equation}
\label{decompn} 
\hat b^{\mathrm{ll}}_{t_n,h}(x)-b(x) =
\xi_{x,h}(X_{t_n}) + r_{x,h}^{\mathrm{loc}}(X_{t_n})+
r_{x,h}^{\mathrm{traj}}(X_{t_n}) .
\end{equation}
For this purpose, we consider the simple process
\[
T_s^n=\sum_{k=1}^{n-1}
\biggl[\frac{\v^n_2(t_n)}{\d^n(t_n)} - \frac
{\v^n_{1}(t_n)}{\d^n(t_n)} \biggl( \frac{X_{t_k} -x}h \biggr)
\biggr] \alpha_H^2 (t_n-t_k)^{1-2H}
N \biggl( \frac{X_{t_k} -x}h \biggr)\1 _{[t_k,t_{k+1})}(s) .
\]
Since $\hat b^{\mathrm{ll}}_{t_n,h}(x) = \int_0^{t_n} T^n_s\,\d X_s$, we
use \eqref{Y} and \eqref{YB} to write that
\begin{eqnarray*}
\hat b^{\mathrm{ll}}_{t_n,h}(x) & =& \int
_0^{t_n} \frac
{T^n_s}{w_H(t_n,s)} \,\d Y_s
\\
& =& \int_0^{t_n} b(X_s)
T^n_s \,\d s+\bigl(\lambda_H(2-2H)
\bigr)^{1/2} \int_0^{t_n}
\frac{T^n_s}{w_H(t_n,s)} s^{1/2-H} \,\d B_s
\\
& =& \int_0^{t_n} b(X_s)
T^n_s \,\d s+ {\alpha}_H^{-1}\int
_0^{t_n} T^n_s
(t_n-s)^{H-1/2} \,\d B_s .
\end{eqnarray*}
As in the case of continuous observations,
\[
\xi_{x,h}(X_{t_n}) = {\alpha}_H^{-1}
\int_0^{t_n} T^n_s
(t_n-s)^{H-1/2} \,\d B_s
\]
is the first component of the two dimensional random vector
\[
\Xi^n_{x,h} = \bigl(\mathrm{V}^n_{t_n}
\bigr)^{-1} \int_0^{t_n} \pmatrix{ \mu^0(t_n,s)
\vspace*{2pt}\cr
\mu^1(t_n,s)} \,\d B_s ,
\]
where for $j=0,1$:
\[
\mu^j(t_n,s)  = (t_n-s)^{H-1/2}
\sum_{k=1}^{n-1}\alpha_H
(t_n-t_k)^{1-2H} \biggl( \frac{X_{t_k} -x}h
\biggr)^j N \biggl( \frac{X_{t_k} -x}h \biggr)\1_{[t_k,t_{k+1})}(s) .
\]
The last two terms in \eqref{decompn} come from the equality:
\begin{eqnarray*}
\int_0^{t_n} T^n_s
b(X_s) \,\d s & = &\int_0^{t_n}
T^n_s \bigl( b(X_s)- b(X_{t_k})
\bigr) \,\d s + \int_0^{t_n} T^n_s
\bigl(b(X_{t_k})-b(x) \bigr) \,\d s
\\
&:=& r_{x,h}^{\mathrm{traj}}(X_{t_n})+r_{x,h}^{\mathrm{loc}}(X_{t_n}).
\end{eqnarray*}
Easy but tedious computations yield that $ r_{x,h}^{\mathrm
{traj}}(X_{t_n})$, respectively, $r_{x,h}^{\mathrm{loc}}(X_{t_n})$, is
the first component of
\[
R^{\mathrm{traj}}_{x,h}(X_{t_n})=\bigl(\mathrm{V}^n_{t_n}
\bigr)^{-1} \int_0^{t_n}\pmatrix{ \varpi^0_{\mathrm{traj}}(t_n,s)
\vspace*{2pt}\cr
\varpi^1_{\mathrm{traj}}(t_n,s)} \,\d s ,
\]
respectively,
\[
R^{\mathrm{loc}}_{x,h}(X_{t_n})=\bigl(\mathrm{V}^n_{t_n}
\bigr)^{-1} \int_0^{t_n} \pmatrix{ \varpi^0_{\mathrm{loc}}(t_n,s)
\vspace*{2pt}\cr
\varpi^1_{\mathrm{loc}}(t_n,s)} \,\d s ,
\]
where we have denoted for $j=0,1$:
\begin{eqnarray*}
\varpi^j_{\mathrm{traj}}(t_n,s) &=& \sum
_{k=1}^{n-1}\alpha_H (t_n-t_k)^{1-2H}
\biggl( \frac
{X_{t_k} -x}h \biggr)^j N \biggl( \frac{X_{t_k} -x}h
\biggr) \bigl(b(X_s)-b(X_{t_k}) \bigr)\1_{[t_k,t_{k+1})}(s),
\\
\varpi^j_{\mathrm{loc}}(t_n,s) &=& \sum
_{k=1}^{n-1}\alpha_H (t_n-t_k)^{1-2H}
\biggl( \frac
{X_{t_k} -x}h \biggr)^j N \biggl( \frac{X_{t_k} -x}h
\biggr) \delta_{x,h}(X_{t_k})\1_{[t_k,t_{k+1})}(s) .
\end{eqnarray*}

\begin{remi}
As it has been already noticed in Nadaraya--Watson's type estimation
procedure (see Section \ref{nada11}), three terms
appear in the decomposition \eqref{decompn}. Each of them have a
precise meaning:
\begin{itemize}
\item the first term, $\xi_{x,h}(X_t)$, is a ``stochastic error term'';
\item$r_{x,h}^{\mathrm{loc}}(X_{t_k})$) represents again the accuracy
of the
local approximation of $b$ by a constant function in a neighbourhood of
the point $x$ in the discrete times
$(t_k)_{0\le k\le n}$;
\item$r_{x,h}^{\mathrm{traj}}(X_{t_n})$ is the error due to the
discretization of the continuous process $(X_s)_{s\ge0}$.
\end{itemize}
\end{remi}
The next result establishes the probability deviation bound for the
discrete locally linear estimator of $b(x)$.
%
\begin{theo}\label{didill}
We assume that $b$ is Lipschitz with Lipschitz's constant $L$. There
exists $\mathfrak{u}_1,\mathfrak{u}_2>0$ such that for any $t_n$
large enough, we have the conditional deviation probability bound:
%
\begin{eqnarray}
\label{eqll} &&\mathbf P \bigl(\bigl | \hat b^{\mathrm{ll}}_{t_n,h}(x)- b(x)\bigr|
\ge c_{\rho,H} \Delta_{x,h} t_n^{1-H-\beta}+c_1
\epsilon_n t_n^{1-H-\beta} + \zeta ,
\A^{\mathrm{\sc{nw}}}_{t_n,h} \bigr)
\nonumber
\\[-8pt]
\\[-8pt]
\nonumber
&&\quad \le4\exp \biggl( -\frac{\rho^2  (1-H)  \zeta^2}{16
\alpha_H^2} t_n^{2\beta} \biggr) +
c_{H} t_n^{\mathfrak{u}_1} \exp \biggl( -\frac{\zeta
}{4 c_2}
t_n^{\mathfrak{u}_2} \biggr) ,
\end{eqnarray}
where we have set:
\begin{itemize}[$-$]
\item[$-$] $\mathcal A^{\mathrm{ll}}_{t_n,h}= \{\lambda
_m^n(t_n)\ge\rho t_n^{1-H+\beta}  \}$ with $\beta$ a positive
real number such that $1-\beta<\gamma H$;
\item[$-$] $\lambda_m^n(t_n)$ denotes
the smallest eigenvalue of the matrix $\mathrm{V}^n_{t_n}$;
\item[$-$] $c_1>0$ depends on $\rho$, $L$, $x$, $h$ and $H$;
\item[$-$] $c_2>0$ depends on $\rho$, $H$ and $L$.
\end{itemize}
\end{theo}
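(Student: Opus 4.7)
The plan is to mimic the approach of Theorem~\ref{th-dev}, using the decomposition \eqref{decompn} of $\hat b^{\mathrm{ll}}_{t_n,h}(x)-b(x)$ into the three terms $\xi_{x,h}(X_{t_n})$, $r_{x,h}^{\mathrm{loc}}(X_{t_n})$ and $r_{x,h}^{\mathrm{traj}}(X_{t_n})$, and to control each one separately on the event $\A^{\mathrm{ll}}_{t_n,h}$. The key observation is that on $\A^{\mathrm{ll}}_{t_n,h}$ the matrix $(\mathrm{V}^n_{t_n})^{-1}$ has operator norm at most $1/(\rho\, t_n^{1-H+\beta})$, so each coordinate of $\Xi^n_{x,h}$, $R^{\mathrm{loc}}_{x,h}(X_{t_n})$ and $R^{\mathrm{traj}}_{x,h}(X_{t_n})$ can be bounded by $(\rho\, t_n^{1-H+\beta})^{-1}$ times the Euclidean norm of the corresponding vector integral.

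For the stochastic term, I would fix $t_n$ and apply Lemma~\ref{exp} to each of the two fractional martingales $\int_0^{t_n}\mu^j(t_n,s)\,dB_s$, $j=0,1$. Since $N\le 1$ and $|zN(z)|\le 1$, the quadratic variation computation is identical to the one carried out for $Z^n$ in the proof of Theorem~\ref{th-dev} (splitting cases $H>1/2$ and $H<1/2$), and yields $\int_0^{t_n}(\mu^j(t_n,s))^2\,ds\le C_H\,\alpha_H^2\,t_n^{2-2H}$. Combined with the eigenvalue bound and a union bound over $j=0,1$, this produces the first term of \eqref{eqll}, namely $4\exp(-\rho^2(1-H)\zeta^2 t_n^{2\beta}/(16\alpha_H^2))$.

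For $r_{x,h}^{\mathrm{loc}}(X_{t_n})$, I would use that on the support of $N((X_{t_k}-x)/h)$ one has $|X_{t_k}-x|\le h$, hence $|\delta_{x,h}(X_{t_k})|\le \Delta_{x,h}$ by definition of $\Delta_{x,h}$. Using $|zN(z)|^j\le 1$ gives $|\varpi^j_{\mathrm{loc}}(t_n,s)|\le \Delta_{x,h}\alpha_H\sum_k (t_n-t_k)^{1-2H}\1_{[t_k,t_{k+1})}(s)$, which after integration is comparable to $\Delta_{x,h}\,\alpha_H\,t_n^{2-2H}$ (handling the two cases $H\gtrless 1/2$ exactly as in the proof of Theorem~\ref{th-dev}). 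Division by $\lambda_m^n(t_n)\ge \rho\,t_n^{1-H+\beta}$ yields the deterministic bound $c_{\rho,H}\,\Delta_{x,h}\,t_n^{1-H-\beta}$.

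The trajectory error $r_{x,h}^{\mathrm{traj}}(X_{t_n})$ is handled analogously: by the Lipschitz property of $b$ and the pathwise estimate \eqref{nx} (obtained from the SDE and Gronwall's inequality), one has $|b(X_s)-b(X_{t_k})|\le L\bigl(c_{x,h,L}\epsilon_n+\epsilon_n^{\h}\|B^H\|_{0,t_n,\h}\bigr)$ for $t_k\le s<t_{k+1}$. The same computation as above then yields $|r_{x,h}^{\mathrm{traj}}(X_{t_n})|\le c_1\,\epsilon_n\,t_n^{1-H-\beta}+c_2\,\epsilon_n^{\h}\,t_n^{1-H-\beta}\,\|B^H\|_{0,t_n,\h}$ on $\A^{\mathrm{ll}}_{t_n,h}$. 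The random contribution is then absorbed via Chebyshev's exponential inequality and the Fernique-type bound of Lemma~\ref{fernique}, copying verbatim the argument \eqref{yu2}--\eqref{yu3}; choosing $\h_0=H(\gamma+3)/(2(\gamma+1))$ as there produces the second term of \eqref{eqll} with explicit $\mathfrak{u}_1,\mathfrak{u}_2>0$ involving $H$ and $\gamma$.

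The main obstacle is bookkeeping: contrary to the Nadaraya--Watson case where a single scalar denominator appears, here one must uniformly control a $2\times 2$ matrix inverse. Everything rests on converting the spectral bound $\lambda_m^n(t_n)\ge \rho\,t_n^{1-H+\beta}$ into an operator-norm bound for $(\mathrm{V}^n_{t_n})^{-1}$, and then ensuring that each vector integral whose first coordinate we wish to bound has Euclidean norm dominated by $t_n^{2-2H}$ up to the relevant factor ($\Delta_{x,h}$, $\epsilon_n$, or $\|B^H\|_{0,t_n,\h}\epsilon_n^{\h}$). The assumption $1-\beta<\gamma H$ is precisely what ensures $\epsilon_n\,t_n^{1-H-\beta}\to 0$ as $t_n\to\infty$ (i.e.\ that the discretization bias is asymptotically negligible), while the technical lower bound on $t_n$ comes from the same smallness condition as in \eqref{tau0}.
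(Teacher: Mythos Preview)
Your overall strategy matches the paper's proof: decompose via \eqref{decompn}, use the eigenvalue bound on $\A^{\mathrm{ll}}_{t_n,h}$ to control $(\mathrm{V}^n_{t_n})^{-1}$, apply Lemma~\ref{exp} to each $\int_0^{t_n}\mu^j(t_n,s)\,dB_s$ for the stochastic term, bound $r^{\mathrm{loc}}$ deterministically by $c_{\rho,H}\Delta_{x,h}t_n^{1-H-\beta}$, and treat $r^{\mathrm{traj}}$ via \eqref{nx} plus the Fernique--Chebyshev argument. All of this is correct.

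There is however one genuine gap in the Fernique step. You propose to copy \eqref{yu2}--\eqref{yu3} \emph{verbatim} with the Nadaraya--Watson choice $\h_0=H(\gamma+3)/(2(\gamma+1))$. This does not work here: because $|r_{x,h}^{\mathrm{traj}}(X_{t_n})|$ carries the extra factor $t_n^{1-H-\beta}$, the threshold in the Chebyshev step becomes $\zeta/(2c_2\,\epsilon_n^{\h}\,t_n^{1-H-\beta})$ rather than $\zeta/(2L\,\epsilon_n^{\h})$. The two exponents you must force to have the right sign are therefore
\[
\h(\gamma-1)+H-1+\beta>0\qquad\text{and}\qquad -\h(\gamma+1)+H+1-\beta<0,
\]
i.e.\ $\h>\max\bigl((1-H-\beta)/(\gamma-1),\,(1+H-\beta)/(\gamma+1)\bigr)$. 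The NW value $\h_0$ fails the second inequality in general (try $H=0.1$, $\gamma=10$, $\beta=0.1$). The paper instead picks any $\h$ in the interval
\[
\max\Bigl(\tfrac{1-H-\beta}{\gamma-1},\,\tfrac{1+H-\beta}{\gamma+1}\Bigr)<\h<H,
\]
and the hypothesis $1-\beta<\gamma H$ is exactly what makes this interval nonempty (both upper constraints reduce to $1-\beta<\gamma H$). So the role you assign to this hypothesis---ensuring that the deterministic bias $\epsilon_n t_n^{1-H-\beta}$ vanishes---is not the operative one; its real purpose is to guarantee an admissible $\h$ in the Fernique argument. Once you adjust the choice of $\h$ accordingly, your proof goes through and coincides with the paper's.
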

The proof of this result is done in Section \ref{proofs}.

All the constants in \eqref{eqll} are known explicitly. The interested
reader shall find them in the proof.
The next theorem is one of the most important result of this work since
it sets convergence of the discretized estimator toward
the unknown value $b(x)$.
%
\begin{theo}\label{consist-ll}
Assume that Hypotheses \textup{\ref{hypb}(b)}, \textup{(c)} and Hypothesis~\ref
{reg-inv-ll} hold.
If moreover the number of approximation points satisfies $n\asymp
t_n^\gamma$ with $\gamma> \max ( 1+(\mm^2+2)  H;3)$,
then the locally linear estimator of $b(x)$ is consistent:
\[
 \hat b^{\mathrm{ll}}_{t_n,h}(x) \mathop{{\hbox to 50pt{
\rightarrowfill}}}^{\mathrm{in\ probability}}_{t_n\to\infty,h\to
0} b(x) .
\]
\end{theo}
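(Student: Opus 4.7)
The plan is to follow the template of Step~2 in the proof of Theorem~\ref{as}. Fix $\eps>0$ and write
\begin{align*}
\P\bigl(|\hat b^{\mathrm{ll}}_{t_n,h}(x)-b(x)|\ge\eps\bigr)
&\le \P\bigl(|\hat b^{\mathrm{ll}}_{t_n,h}(x)-b(x)|\ge\eps,\ \mathcal A^{\mathrm{ll}}_{t_n,h}\bigr) + \P\bigl(\Omega\setminus\mathcal A^{\mathrm{ll}}_{t_n,h}\bigr).
\end{align*}
The first summand I will control by the deviation bound \eqref{eqll} of Theorem~\ref{didill}; the second will be shown to vanish thanks to the ergodic properties established in Proposition~\ref{ergo}.

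For the first summand I would fix $\beta\in\bigl(\max(0,1-\gamma H),\,1-H\bigr)$, choose a time--dependent bandwidth $h=h_n\to 0$ with $h_n\,t_n^{1-H-\beta}\to 0$, and take $\zeta=\eps/2$. Under Hypotheses~\ref{hypb}.b) and~\ref{hypb}.c), $b$ is $C^1$ and Lipschitz, so $\Delta_{x,h_n}\le (L+|b'(x)|)\,h_n$, whence $c_{\rho,H}\,\Delta_{x,h_n}\,t_n^{1-H-\beta}\to 0$; the discretization bias $c_1\,\epsilon_n\,t_n^{1-H-\beta}\asymp t_n^{2-H-\beta-\gamma}$ also tends to $0$ since $\gamma>3>2-H-\beta$. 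For $t_n$ large the event occurring in \eqref{eqll} therefore contains $\{|\hat b^{\mathrm{ll}}_{t_n,h}-b(x)|\ge\eps\}\cap\mathcal A^{\mathrm{ll}}_{t_n,h}$, and both terms on the right-hand side of \eqref{eqll} decay exponentially in $t_n$.

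The main obstacle is the second summand, which requires showing that the smallest eigenvalue $\lambda_m^n(t_n)$ of $\mathrm V^n_{t_n}$ exceeds $\rho\,t_n^{1-H+\beta}$ with probability tending to $1$. Using the formula analogous to \eqref{vp} and the identity $\lambda_m^n(t_n)\lambda_M^n(t_n)=\d^n(t_n)$, the task reduces to the asymptotics
\begin{align*}
t_n^{2H-2}\,\v_j^n(t_n)\ \xrightarrow[n\to\infty]{\P\text{-a.s.}}\ \frac{\alpha_H^2}{2-2H}\,\E\bigl[\ffi_j(\bar X)\bigr],\qquad j\in\{0,1,2\},
\end{align*}
where $\ffi_j(z)=\bigl((z-x)/h\bigr)^j\,N\bigl((z-x)/h\bigr)$. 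These functions are $C^1$ with polynomial growth of order $\ppp=2$ (attained at $j=2$), which is precisely why the hypotheses $\gamma>1+(\mm^2+2)H$ and $\gamma>\ppp+1=3$ are needed when invoking Proposition~\ref{ergo}.ii. Hypothesis~\ref{reg-inv-ll} guarantees $\E[\ffi_2(\bar X)]>0$, and Cauchy--Schwarz (strict since $\bar X$ is not a.s. constant in a neighborhood of $x$) yields that $t_n^{4H-4}\,\d^n(t_n)$ converges to a strictly positive limit; consequently $\lambda_m^n(t_n)\succeq t_n^{2-2H}$, which dominates $\rho\,t_n^{1-H+\beta}$ since $\beta<1-H$.

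The genuinely delicate point is to couple the discrete ergodic theorem \eqref{ergo-lim2} with the fractional weight $(t_n-t_k)^{1-2H}$ appearing inside $\v_j^n(t_n)$. When $H<1/2$ I would proceed as in Step~2 of the proof of Theorem~\ref{as}: restrict the integration to $[0,t_n/2]$, on which the weight is bounded below by $(t_n/2)^{1-2H}$, and apply Proposition~\ref{ergo}.ii on the truncated (no longer equispaced) grid, whose non-uniformity does not affect the conclusion as noted in Theorem~\ref{as}. When $H>1/2$ the weight is integrable and an adaptation of the fractional Toeplitz Lemma~\ref{toeplitz} applied to the Riemann sum representation of $\v_j^n(t_n)$ extracts the required asymptotic constant $\alpha_H^2/(2-2H)$. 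Putting these ingredients together finishes the argument.
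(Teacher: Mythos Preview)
Your overall template matches the paper's short proof exactly: invoke \eqref{eqll} with some $\beta\in(1-\gamma H,\,1-H)$ and then establish $\P(\Omega\setminus\mathcal A^{\mathrm{ll}}_{t_n,h})\to 0$ by the discrete ergodic theorem, noting that the relevant test function $z\mapsto((z-x)/h)^2N((z-x)/h)$ has $\ppp=2$, which is precisely the origin of the constraint $\gamma>\max\bigl(1+(\mm^2+2)H,\,3\bigr)$.

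The genuine difference is in how you lower-bound $\lambda_m^n(t_n)$. The paper (via the proof of Theorem~\ref{th-dev-ll}.ii, transported to the discrete grid as in Step~2 of Theorem~\ref{as}) writes $\lambda_m^n(t_n)\ge\v_2^n(t_n)$ directly and then only needs a \emph{lower} bound on the single sum $\v_2^n(t_n)$; for that the crude inequality $(t_n-t_k)^{1-2H}\ge t_n^{1-2H}$ when $H>1/2$, or the restriction to $[0,t_n/2]$ when $H<1/2$, suffices --- no Toeplitz lemma and no identification of limiting constants. Your determinant route instead requires \emph{exact} limits for all three rescaled sums $t_n^{2H-2}\v_j^n(t_n)$ in order to conclude $c_0c_2-c_1^2>0$; the $[0,t_n/2]$ truncation gives only a one-sided bound with the wrong constant, and the strict Cauchy--Schwarz step silently assumes that the law of $\bar X$ is not a Dirac mass inside $(x-h,x+h)$, which is plausible but not contained in Hypothesis~\ref{reg-inv-ll} as stated. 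That said, the paper's inequality $\lambda_m\ge\v_2$ is itself suspect (the smallest eigenvalue of a symmetric $2\times2$ matrix is at most, not at least, its smallest diagonal entry, with equality only when the off-diagonal vanishes), so your approach --- once the limits and the strict inequality are properly justified --- may in fact be the sounder way to close the argument.
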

\begin{pf}
Since we apply Proposition \ref{ergo} with $z\mapsto z^2 N(z)$ that
satisfies \eqref{hyp-ffi} with $\ppp=2$, the condition
on $\gamma$ is justified. Then we just have to use \eqref{eqll} with
$1-\gamma H<\beta< 1-H$ and we argue as in the proof of \eqref{sc2}
form Theorem \ref{as}.
\end{pf}
%
\section{Proofs}\label{proofs}
\subsection{\texorpdfstring{Proof of Theorem \protect\ref{th-dev-ll}}
{Proof of Theorem 7}}
We split its proof into separate steps. The starting point is the
decomposition \eqref{decomp}. We treat each term of $ \hat b^{\mathrm
{ll}}_{t_n,h}(x)- b(x)$ separately.

We recall some basic facts from linear algebra. We denote for a
vector ${\mathrm z}=(z_1,z_2)^{\mathrm{T}}\in\mathbf R^2$, $\|
{\mathrm z} \|_\infty= |z_1|\vee|z_2|$ and $\|{\mathrm z}\|_2 $ its
Euclidian norm. For any $t\ge0$, $0<\lambda_m(t)\le\lambda_M(t)$
are the eigenvalues of the symmetric matrix $\mathrm{V}_t$. For $
{\mathrm y}=(y_1{,}y_2)^{\mathrm{T}}\in\mathbf R^2$, we denote $
{\mathrm z}=(z_1,z_2)^{\mathrm{T}} = (\mathrm{V}_t)^{-1} {\mathrm y}
$ and it holds
%
\begin{equation}
\label{norm} \|{\mathrm z}\|_\infty \le \| {\mathrm z}\|_2
= \biggl( \frac{|y_1|^2}{\lambda_m(t)^2}+\frac{|y_2|^2}{\lambda
_M(t)^2} \biggr)^{1/2} \le
\sqrt{2} \biggl( \frac{|y_1|}{\lambda_m(t)}\vee\frac
{|y_2|}{\lambda_M(t)} \biggr) .%
\end{equation}
%
\subsubsection{\texorpdfstring{Proof of Theorem~\textup{\protect\ref{th-dev-ll}(i)}}
{Proof of Theorem 7(i)}}
We study $r_{x,h}^\mathrm{loc}(X_t)$. Since for any real $z$, $0\le
N(z)\le1 $ and $z^2N(z)\le N(z)$, we have the inequality $\v_2(t)\le
\v_0(t)$.
By the Cauchy--Schwarz inequality, we obtain
\begin{eqnarray*}
\biggl\llvert \int_0^t \alpha_H^2
(t-s)^{1-2H} \biggl( \frac{X_s
-x}h \biggr)N \biggl(
\frac{X_s -x}h \biggr) \delta _{x,h}(X_s) \,\d s\biggr
\rrvert %
\le \Delta_{x,h} \bigl( \v_0(t)
\v_2(t) \bigr)^{1/2} \le \Delta_{x,h}
\v_0(t) ,
\end{eqnarray*}
and thus
%
\begin{equation}
\label{nunu} \biggl\llvert \int_0^t \tilde
\nu_1(t,s) \delta_{x,h}(X_s)\,\d s\biggr\rrvert \le
\Delta_{x,h} \v_0(t) .
\end{equation}
The relation \eqref{norm} yields
\begin{eqnarray*}
\bigl|r_{x,h}^\mathrm{loc}(X_t) \bigr| & \le&\bigl\|
R_{x,h}(X_t) \bigr\|_\infty
\\
& \le&\sqrt{2} \biggl( \frac{|\int_0^t \tilde\nu_0(t,s)
\delta_{x,h}(X_s)\,\d s | }{\lambda_m(t)} \vee\frac{ |\int_0^t \tilde\nu_1(t,s) \delta_{x,h}(X_s)\,\d s| }{\lambda_M(t)}
\biggr)
\\
& \le& \sqrt{2} \Delta_{x,h} \biggl( \frac{\v_0(t)}{\lambda
_m(t)} \vee
\frac{\v_0(t)}{\lambda_M(t)} \biggr)
\end{eqnarray*}
and consequently
\[
\bigl|r_{x,h}^\mathrm{loc}(X_t) \bigr| \le \sqrt{2}
\Delta_{x,h} \frac{\v
_0(t)}{\lambda_m(t)} .
\]
Since $\v_0(t) \le( c_H^2/\lambda_H ) t^{2-2H}$, we deduce the
following bound on the random set $\mathcal A^{\mathrm{ll}}_{t,h}$
%
\begin{equation}
\label{bound-r} \bigl|r_{x,h}^\mathrm{loc}(X_t) \bigr| \le
c_{\rho,H} \Delta_{x,h} t^{1-H-\beta}
\end{equation}
with $c_{\rho,H}=\sqrt{2} c_H^2 /(\rho\lambda_H )$.

For the analysis of $\xi_{x,h}^1(X_t)$, we consider $ \zeta>0$ and by
\eqref{norm} we may write
\begin{eqnarray*}
&& \mathbf P \bigl(\bigl |\xi_{x,h}^1(X_t)\bigr|\ge\zeta
, \A^{\mathrm
{ll}}_{t,h} \bigr)
\\
&&\quad \le \mathbf P \bigl( \bigl\|\Xi_{x,h}^1(X_t)
\bigr\|_\infty\ge \zeta , \A^{\mathrm{ll}}_{t,h} \bigr)
\\
&&\quad \le \mathbf P \biggl( \frac{\llvert \int_0^t
\nu_0(t,s) \,\d B_s \rrvert }{\lambda_m(t)} \vee\frac{ \llvert \int_0^t \nu_1(t,s) \,\d B_s\rrvert  }{\lambda_M(t)}
\ge\zeta/\sqrt {2} , \A^{\mathrm{ll}}_{t,h} \biggr)
\\
&&\quad \le \mathbf P \biggl( \frac{\llvert \int_0^t
\nu_0(t,s) \,\d B_s\rrvert  }{\lambda_m(t)} \ge\zeta/\sqrt{2} , \A
^{\mathrm{ll}}_{t,h} \biggr) + \mathbf P \biggl( \frac{\llvert \int_0^t \nu_1(t,s) \,\d B_s\rrvert  }{\lambda_M(t)}
\ge\zeta/\sqrt {2} , \A^{\mathrm{ll}}_{t,h} \biggr).
\end{eqnarray*}
Since $\lambda_M(t)\ge\lambda_m(t) \ge \rho t^{1-H+\beta}$ on
the random set $\A^{\mathrm{ll}}_{t,h}$, it follows that:
%
\begin{eqnarray}
\label{diag} \mathbf P \bigl( \bigl|\xi_{x,h}^1(X_t)\bigr|
\ge\zeta , \A^{\mathrm
{ll}}_{t,h} \bigr) &\le& \mathbf P \biggl( \biggl
\llvert \int_0^t \nu_0(t,s)
\,\d B_s\biggr\rrvert \ge\frac{\rho\zeta}{\sqrt{2}} t^{1-H+\beta} \biggr)
\nonumber
\\[-8pt]
\\[-8pt]
\nonumber
&&{} + \mathbf P \biggl( \biggl\llvert \int
_0^t \nu_1(t,s) \,\d B_s
\biggr\rrvert \ge\frac{\rho\zeta}{\sqrt{2}} t^{1-H+\beta} \biggr) .
\end{eqnarray}
For $j=0,1$, $ |( \frac{X_s -x}h )^j N ( \frac{X_s -x}h ) | \le1$.
Then we may apply the exponential inequality \eqref{inegexp} and we obtain
%
\begin{equation}
\label{bound-xi} \mathbf P \bigl( \bigl|\xi_{x,h}^1(X_t)\bigr|
\ge\zeta , \A^{\mathrm
{ll}}_{t,h} \bigr) \le 4\exp \biggl( -
\frac{(1-H)\rho^2}{2\alpha_H^2} \zeta^2 t^{2\beta} \biggr) .
\end{equation}

Thanks to the decomposition \eqref{decomp} and the bounds \eqref
{bound-r} and \eqref{bound-xi}, we deduce that
\begin{eqnarray*}
&& \mathbf P \bigl( \bigl| \hat b^{\mathrm{ll}}_{t,h}(x)- b(x)\bigr|\ge
c_{\rho
,H} \Delta_{x,h} t^{1-H-\beta} + \zeta ,
\A^{\mathrm
{ll}}_{t,h} \bigr) \\ %
&&\quad \le\mathbf P
\bigl( \bigl|\xi _{x,h}^1(X_t)\bigr|+\bigl|r_{x,h}^\mathrm{loc}(X_t)\bigr|
\ge c_{\rho,H} \Delta _{x,h} t^{1-H-\beta} + \zeta ,
\A^{\mathrm{ll}}_{t,h} \bigr)
\\
&&\quad \le\mathbf P \bigl(\bigl |\xi_{x,h}^1(X_t)\bigr|
\ge\zeta , \A^{\mathrm{ll}}_{t,h} \bigr)
\\
&&\quad \le 4\exp \biggl( - \frac{(1-H)\rho
^2}{2\alpha_H^2}
\zeta^2 t^{2\beta} \biggr)
\end{eqnarray*}
and the proof of \eqref{poo} is now completed. 

\subsubsection{\texorpdfstring{Proof of Theorem~\textup{\protect\ref{th-dev-ll}(ii)}}
{Proof of Theorem 7(ii)}}
We follow the same arguments that the ones used in the proof of Theorem
\ref{as}. With $\beta<1-H$, we need to show that
%
\begin{equation}
\label{ii} \P \bigl(\Omega\setminus \A^{\mathrm{ll}}_{t,h} \bigr) = \P
\bigl( \lambda_m(t)\le\rho t^{1-H+\beta} \bigr) \mathop{
\longrightarrow}_{t\to\infty} 0 .
\end{equation}
Since $0\le N\le1$ and $z^2N(z)\le N(z)$ for any real $z$, $\lambda
_m(t)\ge\mathrm{v}_2(t)$ by \eqref{vp}.
Thus,
\begin{eqnarray*}
\P \bigl(\Omega\setminus \A^{\mathrm{ll}}_{t,h} \bigr) & \le& \P
\bigl( \mathrm{v}_2(t)\le\rho t^{1-H+\beta} \bigr)
\\
& \le&\P \biggl( \int_0^t
\alpha_H^2 (t-s)^{1-2H} \biggl( \frac{X_s -x}h
\biggr)^2 N \biggl( \frac{X_s -x}h \biggr) \,\d s\le\rho
t^{1-H+\beta} \biggr) .
\end{eqnarray*}
When $H>1/2$, $(t-s)^{1-2H}\ge t^{1-2H}$ and consequently
\begin{eqnarray*}
&&\biggl\{ \int_0^t \alpha_H^2
(t-s)^{1-2H} \biggl( \frac{X_s
-x}h \biggr)^2 N \biggl(
\frac{X_s -x}h \biggr) \,\d s\le \rho t^{1-H+\beta} \biggr\}
\\
&&\quad \subseteq \biggl\{ \frac{1}t\int_0^t
\biggl( \frac{X_s -x}h \biggr)^2 N \biggl( \frac{X_s -x}h
\biggr) \,\d s \le\frac{\rho}{\alpha_H^2} t^{H+\beta-1} \biggr\}.
\end{eqnarray*}
Since Hypothesis \ref{reg-inv-ll} holds, we may apply Proposition \ref
{ergo}. So \eqref{ii} is true and the result is proved for $H>1/2$.
When $H<1/2$, we use $(t-s)^{1-2H}\ge(t-\frac{t}2)^{1-2H} = (\frac
{t}2)^{1-2H}$ and we write
\begin{eqnarray*}
&&\biggl\{ \int_0^t \alpha_H^2
(t-s)^{1-2H} \biggl( \frac{X_s
-x}h \biggr)^2 N \biggl(
\frac{X_s -x}h \biggr)  \,\d s\le \rho t^{1-H+\beta} \biggr\}
\\
&&\quad \subseteq \biggl\{ \frac{1}{t/2} \int_0^{{t}/2}
\biggl( \frac{X_s -x}h \biggr)^2 N \biggl( \frac{X_s -x}h
\biggr) \,\d s \le\frac{4\rho}{\alpha
_H^2} \biggl( \frac{t}2
\biggr)^{H+\beta-1} \biggr\}.
\end{eqnarray*}
The concluding arguments are unchanged.
%
\subsection{\texorpdfstring{Proof of Theorem \protect\ref{didill}}
{Proof of Theorem 8}}
We treat separately each term of the decomposition \eqref{decompn}.
Repeating the arguments that led us to~\eqref{diag} yields that on
$\mathcal A^{\mathrm{ll}}_{t_n,h}$
\begin{eqnarray*}
\mathbf P \bigl( \bigl|\xi_{x,h}(X_{t_n})\bigr|\ge\zeta ,
\A^{\mathrm
{ll}}_{t_n,h} \bigr) &\le& \mathbf P \biggl( \biggl\llvert \int
_0^{t_n}\mu^0(t_n,s)
\,\d B_s\biggr\rrvert \ge\frac{\rho\zeta}{\sqrt{2}} t_n^{1-H+\beta}
\biggr)
\\
&&{} + \mathbf P \biggl( \biggl\llvert \int
_0^{t_n} \mu ^1(t_n,s)
\,\d B_s\biggr\rrvert \ge\frac{\rho\zeta}{\sqrt{2}} t_n^{1-H+\beta}
\biggr) .
\end{eqnarray*}
We follow the arguments that led us to \eqref{arghh}.
We fix $t_n$ and for $j=0,1$, we consider the martingales
$Z^{n,j}:=(Z^{n,j}_r)_{0\le r\le t_n}$ defined by
\[
Z^{n,j}_r = \int_0^r
\mu^j(t_n,s) \,\d B_s .
\]
Since $ |( \frac{X_s -x}h )^j N ( \frac{X_s -x}h ) | \le1$, the
quadratic variations of the martingales $Z^{n,j}$ satisfy
\[
\bigl\langle Z^{n,j} \bigr\rangle_r  \le
\frac{2  \alpha_H^2}{1-H} t_n^{2-2H}.
\]
Then by Lemma \ref{exp} we obtain that
%
\begin{equation}
\label{arghhn} \mathbf P \bigl( \bigl|\xi_{x,h}(X_{t_n})\bigr|\ge\zeta
, \A^{\mathrm
{ll}}_{t_n,h} \bigr) \le4\exp \biggl( -\frac{\rho^2
(1-H)  \zeta^2}{8  \alpha_H^2}
t_n^{2\beta} \biggr) .
\end{equation}
Now we deal with $r_{x,h}^{\mathrm{traj}}(X_{t_n})$ and
$r_{x,h}^{\mathrm{loc}}(X_{t_n})$.
For $j=0,1$ we have a discrete version of~\eqref{nunu}
\[
\biggl\llvert \int_0^t \varpi^j_{\mathrm{loc}}(t_n,s)
\,\d s\biggr\rrvert \le\Delta _{x,h} \v^n_0(t_n)
.
\]
Arguing as in the proof of \eqref{bound-r},
%
\begin{equation}
\label{bound-rn} \bigl|r_{x,h}^{\mathrm{loc}}(X_{t_n}) \bigr| \le
c_{\rho,H} \Delta_{x,h} t_n^{1-H-\beta}
\end{equation}
on the random set $\mathcal A^{\mathrm{ll}}_{t_n,h}$ (we recall that
$c_{\rho,H}=\sqrt{2} c_H^2 /(\rho\lambda_H )$).
We use \eqref{nx} and we obtain similarly,
\[
\biggl\llvert \int_0^t \varpi^j_{\mathrm{traj}}(t_n,s)
\,\d s\biggr\rrvert \le \bigl( c_{x,h,L} \epsilon_n + L
\epsilon_n^\h \bigl\| B^H\bigr \| _{0,t_n,\mathfrak{h}}
\bigr) \v^n_0(t_n) ,
\]
with $c_{x,h,L}=L  \{ L  h+c_b(1+|x|^\mm) \}$. Finally on
the set $\mathcal A^{\mathrm{ll}}_{t_n,h}$ it holds
%
\begin{equation}
\label{bound-rdeuxn} \bigl|r_{x,h}^{\mathrm{traj}}(X_{t_n}) \bigr| \le
\bigl( c_1 \epsilon_n + c_2
\epsilon_n^\h \bigl\| B^H \bigr\|_{0,t_n,\mathfrak{h}}
\bigr) t_n^{1-H-\beta}
\end{equation}
with $c_1=c_{\rho,H}  c_{x,h,L}$ and $c_2=c_{\rho,H}  L$.
Like in the proof of \eqref{eqdidi} (see Section \ref{ji}), we
combine the inequalities \eqref{arghhn}, \eqref{bound-rn} and
\eqref{bound-rdeuxn} and we deduce:
\begin{eqnarray}
\label{yun} &&\mathbf P \bigl( \bigl| \hat b^{\mathrm{ll}}_{t_n,h}(x)- b(x)\bigr|
\ge c_{\rho,H} \Delta_{x,h} t_n^{1-H-\beta}+c_1
\epsilon_n t_n^{1-H-\beta} + \zeta ,
\A^{\mathrm{\sc{nw}}}_{t_n,h} \bigr)
\nonumber
\\[-8pt]
\\[-8pt]
\nonumber
& &\quad \le4\exp \biggl( -
\frac{\rho^2  (1-H)  \zeta^2}{16
\alpha_H^2} t_n^{2\beta} \biggr)+ \mathbf P \biggl( \bigl\|
B^H \bigr\| _{0,t_n,\mathfrak{h}}\ge\frac{\zeta}{2 c_2 \epsilon_n^\h
t_n^{1-H-\beta}} \biggr) .
\end{eqnarray}\eject\noindent
By Lemma \ref{fernique}, we have
\begin{eqnarray*}
&&\mathbf P \biggl( \bigl\| B^H \bigr\|_{0,t_n,\mathfrak{h}}\ge\frac
{\zeta}{2 c_2 \epsilon_n^\h t_n^{1-H-\beta}}
\biggr)\\
&&\quad  \le c_{H,\mathfrak{h}} \bigl(1 +t_n^{H-\mathfrak{h}} \bigr)\exp \biggl( -\frac{\zeta}{2 c_2}
t_n^{\h(\gamma-1)+H-1+\beta} \biggl( 1 - \frac{256 H^2  c_2}{\zeta \mathfrak{h}^2}
t_n^{2(H-\mathfrak{h})} t_n^{-\h(\gamma-1)-H+1-\beta} \biggr) \biggr)
\\
&&\quad \le c_{H,\mathfrak{h}} \bigl(1 +t_n^{H-\mathfrak{h}} \bigr)\exp \biggl( -\frac{\zeta}{2 c_2}
t_n^{\h(\gamma-1)+H-1+\beta} \biggl( 1 - \frac{256 H^2  c_2}{\zeta \mathfrak{h}^2}
t_n^{-\h
(\gamma+1)+H+1-\beta} \biggr) \biggr). 
\end{eqnarray*}
With $\beta<1-\gamma H$ one may choose $\h$ such that
\[
\max \biggl( \frac{1-H-\beta}{\gamma-1} , \frac
{1+H-\beta}{\gamma+1} \biggr) <\h<H .
\]
When $t_n$ is large enough, we obtain
\begin{eqnarray*}
\mathbf P \biggl( \bigl\| B^H\bigr \|_{0,t_n,\mathfrak{h}}\ge\frac
{\zeta}{2 c_2 \epsilon_n^\h t_n^{1-H-\beta}}
\biggr) \le 2 c_{H,\mathfrak{h}} t_n^{H-\mathfrak{h}} \exp \biggl( -
\frac{\zeta}{4 c_2} t_n^{\h(\gamma-1)+H-1+\beta} \biggr) .
\end{eqnarray*}
We report the above estimation in \eqref{yun} and the proof is
complete when we set $\mathfrak{u}_1 = H-\h$ and
$\mathfrak{u}_1 = \h(\gamma-1)+H-1+\beta$.
\begin{appendix}

\section{A Fernique's type lemma}\label{app1}
The exponential moments of the H\"{o}lder norm of the trajectories of a
fBm are classical results
from the theory of Gaussian processes (see \cite{fer}, e.g.).
Nevertheless we are interested in the large time behaviour
of this moment. So we prove in this Appendix the following Fernique's
type lemma in which we give precision on the time dependence
of the estimation.
%
\begin{lemme}\label{fernique}
Let $T>0$, $0< \mathfrak{h} <H <1$. We denote
\[
\bigl\| B^H \bigr\|_{0,T,\mathfrak{h}}=\sup_{0\le s,t\le T}
\frac
{|B_t^H-B_s^H|}{|t-s|^\mathfrak{h}} .
\]
Then for any $T>T_0=(\mathfrak{h}/(8H))^{1/(H-\mathfrak{h})}$ there
exists a constant $c_{H,\mathfrak{h}}$ such that
%
\renewcommand{\theequation}{\arabic{equation}}
\setcounter{equation}{58}
\begin{equation}
\label{esti-exp} \E \bigl[ \exp \bigl( \bigl\| B^H \bigr\|_{0,T,\mathfrak{h}}
\bigr) \bigr] \leq c_{H,\mathfrak{h}} \bigl(1+T^{H-\mathfrak{h}} \bigr) \exp \biggl(
\frac{128 H^2}{\mathfrak{h}^2} T^{2(H-\mathfrak
{h})} \biggr) .
\end{equation}
\end{lemme}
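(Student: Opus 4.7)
The plan is to reduce the problem to the case $T=1$ by self-similarity, and then combine a Gaussian tail estimate for the Hölder seminorm with integration by parts. The self-similarity relation $(B^H_{Tu})_{u\in[0,1]} \stackrel{d}{=} (T^H B^H_u)_{u\in[0,1]}$ gives, via the change of variable $u=s/T$, $v=t/T$ inside the supremum,
\[
\|B^H\|_{0,T,\mathfrak{h}} \;\stackrel{d}{=}\; T^{H-\mathfrak{h}}\,\|B^H\|_{0,1,\mathfrak{h}}.
\]
Setting $Y = \|B^H\|_{0,1,\mathfrak{h}}$ and $\lambda = T^{H-\mathfrak{h}}$, it then suffices to bound $\E[\exp(\lambda Y)]$ with explicit constants.

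Next I would establish a Gaussian-type tail $\P(Y \geq x) \leq C_{H,\mathfrak{h}}\exp(-c_{H,\mathfrak{h}} x^2)$ for some explicit $c_{H,\mathfrak{h}} > 0$. The route is the Garsia-Rodemich-Rumsey inequality applied with $\psi(u) = \exp(u^2/\sigma^2)-1$ and a gauge of the form $p(u) = u^{\mathfrak{h}+\varepsilon}$, using that $B^H_t-B^H_s$ is Gaussian with variance $|t-s|^{2H}$ so that $\E[\psi(|B^H_t-B^H_s|/|t-s|^H)]$ is uniformly bounded for a well chosen $\sigma$. Tracking constants through the inversion of $\psi$ and the resulting chaining yields a rate $c_{H,\mathfrak{h}}$ of order $\mathfrak{h}^2/H^2$; tuning the arbitrary parameters so that $c_{H,\mathfrak{h}} = \mathfrak{h}^2/(512\,H^2)$ produces, after Step 3 below, exactly the exponent $1/(4c_{H,\mathfrak{h}}) = 128\,H^2/\mathfrak{h}^2$ in the statement.

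For Step 3, I use the identity $\E[e^{\lambda Y}] = 1 + \lambda \int_0^\infty e^{\lambda x}\,\P(Y\ge x)\,dx$ (valid for $Y\ge 0$), insert the Gaussian tail, and complete the square: $\lambda x - c_{H,\mathfrak{h}} x^2 = -c_{H,\mathfrak{h}}(x - \lambda/(2c_{H,\mathfrak{h}}))^2 + \lambda^2/(4c_{H,\mathfrak{h}})$. The Gaussian remainder contributes a factor $\sqrt{\pi/c_{H,\mathfrak{h}}}$, and one obtains
\[
\E[e^{\lambda Y}] \;\le\; 1 + C_{H,\mathfrak{h}}\,\lambda\,\exp\!\left(\frac{\lambda^2}{4c_{H,\mathfrak{h}}}\right).
\]
Substituting $\lambda = T^{H-\mathfrak{h}}$ gives the claimed exponential factor. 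The assumption $T > T_0 = (\mathfrak{h}/(8H))^{1/(H-\mathfrak{h})}$, i.e.\ $T^{H-\mathfrak{h}} > \mathfrak{h}/(8H)$, is what allows one to absorb the additive $1$ and any numerical constants into the prefactor $c_{H,\mathfrak{h}}(1+T^{H-\mathfrak{h}})$.

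The main obstacle is the explicit constant in Step 2. Borell-TIS alone produces a sharper exponent $\lambda^2/2$ but requires control of the mean $\E[Y]$, whose dependence on $H$ and $\mathfrak{h}$ is implicit. The GRR approach is cruder but fully explicit; the delicate point is the simultaneous choice of $\sigma$ (inside $\psi$) and of $\varepsilon$ (in the gauge) to keep $\psi^{-1}$ integrable near the diagonal while not blowing up the prefactor. The specific form $T_0 = (\mathfrak{h}/(8H))^{1/(H-\mathfrak{h})}$, which degenerates as $\mathfrak{h}\to H$, reflects the loss incurred when the chosen Hölder exponent approaches the critical one.
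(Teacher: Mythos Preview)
Your approach is sound in outline but genuinely different from the paper's. You use self-similarity to reduce to $T=1$, then aim for a Gaussian tail on $Y=\|B^H\|_{0,1,\mathfrak{h}}$ via GRR with an \emph{exponential} $\psi$, and finally integrate by parts. The paper instead works directly on $[0,T]$ with GRR applied to the \emph{power} function $\psi(u)=u^{2/(H-\mathfrak{h})}$ and gauge $p(u)=u^H$; this yields a random variable $\xi_{H,\mathfrak{h},T}$ dominating the H\"older seminorm, together with the explicit moment bound $\E[\xi^p]\le (16H/\mathfrak{h})^p T^{p(H-\mathfrak{h})}(p-1)!!$. The exponential moment is then obtained by expanding $\E[e^\xi]=\sum_p \E[\xi^p]/p!$ and summing, using $(p-1)!!/p!\le 1/(2^k k!)$; this produces $\exp(c^2/2)$ with $c=16(H/\mathfrak{h})T^{H-\mathfrak{h}}$, and the threshold $T>T_0$ is exactly the condition $c\le c^2/2$ that lets one absorb a stray $e^c$ into $e^{c^2/2}$.

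Your self-similarity reduction is cleaner conceptually, and the Laplace-transform-plus-completion-of-the-square step is standard. The weak point, which you flag yourself, is Step~2: you assert that GRR with exponential $\psi$ can be tuned to give the tail rate $c_{H,\mathfrak{h}}=\mathfrak{h}^2/(512H^2)$, but you do not carry this out, and matching the precise constant $128H^2/\mathfrak{h}^2$ in the statement requires exactly this. The paper's power-function route sidesteps this obstacle: the constant $16H/\mathfrak{h}$ drops out of a single Jensen inequality applied to the GRR integral, and the factor $128H^2/\mathfrak{h}^2=(16H/\mathfrak{h})^2/2$ then appears automatically. So while your strategy is correct, the paper's is more directly auditable for the explicit constants the lemma claims.
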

The explicit form of $c_{H,\mathfrak{h}}$ is given in \eqref{ccc}.
\begin{pf*}{Proof of Lemma \ref{fernique}}
We denote $[z]$ the integer part of a non-negative real $z$. First, we
prove that
%
\renewcommand{\theequation}{\arabic{equation}}
\setcounter{equation}{59}
\begin{equation}
\label{modul} \bigl|B^{H}_t - B^{H}_s \bigr|
\leq \xi_{H,\mathfrak{h},T} |t-s|^{\mathfrak{h}},
\end{equation}
where $\xi_{H,\mathfrak{h},T}$ is a positive random variable such
that for any $p>p_0:=[2/(H-\mathfrak{h})]$
%
\begin{equation}
\label{modul1}  \E \bigl( \xi_{H,\mathfrak{h},T}^{p} \bigr)\leq \biggl(
16 \frac{H}{\mathfrak{h}} \biggr)^p T^{p(H-\mathfrak{h})} (p-1)!! .
\end{equation}
The double factorial of a positive integer $p$ is defined by
\[
\cases{ %
\displaystyle (2k-1)!! = \prod
_{i=1}^k (2i-1)=\frac{(2k)!}{2^k  k!} ;
\vspace*{2pt}\cr
\displaystyle (2k)!!  = \prod_{i=1}^k
(2i)=2^k k! .}
\]
We remark that when $p\le p_0$, we obtain easily that
%
\begin{equation}
\label{modul1bis}  \E \bigl( \xi_{H,\mathfrak{h},T}^{p} \bigr)\leq
\biggl( 16 \frac{H}{\mathfrak{h}} \biggr)^p T^{p(H-\mathfrak{h})}
p_0!! .
\end{equation}
In order to prove \eqref{modul} and \eqref{modul1}, we proceed as
follows. With $\psi(u)= u^{2/(H-\mathfrak{h})}$ and $p(u)=u^{H}$ in
Lemma 1.1 of \cite{grr}, the Garsia--Rodemich--Rumsey inequality reads
\[
\bigl|B^{H}_t-B^{H}_s\bigr| \leq8 \int
_0^{|t-s|} \biggl( \frac{4 \mathfrak
{D}}{u^2}
\biggr)^{(H-\mathfrak{h})/2} H u^{H-1} \,\d u,
\]
where the random variable $\mathfrak{D}$ is
\[
\mathfrak{D} = \int_0^T\int
_0^T \frac
{|B^{H}_t-B^{H}_s|^{2/(H-\mathfrak{h})}}{|t-s|^{2H/(H-\mathfrak{h})}} \,\d t\,\d s .
\]
We have
\begin{eqnarray*}
\bigl|B^{H}_t-B^{H}_s\bigr| & \leq& 8 (4
\mathfrak{D})^{(H-\mathfrak{h})/2} \int_0^{|t-s|}H
u^{\mathfrak{h}-1} \,\d u
\\
& \leq& 8 \frac{H}{\mathfrak{h}} (4\mathfrak {D})^{(H-\mathfrak{h})/2}
|t-s|^{\mathfrak{h}} .
\end{eqnarray*}
We denote $\xi_{H,\mathfrak{h},T} = 8   \frac{H}{\mathfrak
{h}}   (4\mathfrak{D})^{(H-\mathfrak{h})/2}$. By Jensen's
inequality, for $p\ge2/(H-\mathfrak{h})$ it holds
\begin{eqnarray*}
\E \bigl(\xi_{H,\mathfrak{h},T}^{p} \bigr) & \leq& \biggl( 8
4^{(H-\mathfrak{h})/2} \frac
{H}{\mathfrak{h}} \biggr)^p \E \biggl( \int
_0^T\int_0^T
\frac
{|B^{H}_t-B^{H}_s|^{2/(H-\mathfrak{h})}}{|t-s|^{2H/(H-\mathfrak{h})}} \,\d t\,\d s \biggr)^{p(H-\mathfrak{h})/2}
\\
& \leq &\biggl( 16 \frac{H}{\mathfrak{h}} \biggr)^p
T^{p(H-\mathfrak{h})} \int_0^T\int
_0^T \frac{\E (
|B^{H}_t-B^{H}_s|^{2p} )}{|t-s|^{2pH}} \frac{\mathrm{d}t\,\mathrm{d}s}{T^2}
\\
& \leq& \biggl( 16 \frac{H}{\mathfrak{h}} \biggr)^p T^{p(H-\mathfrak{h})}
\E\bigl( |Z|^p\bigr) ,
\end{eqnarray*}
where $Z$ is a Gaussian random variable with zero mean and unit variance.
Since
\[
\E\bigl(|Z|^p\bigr) = \cases{ %
\sqrt{2/\pi} (p-1)!!, & \quad$\mbox{when $p$ is odd;}$
\vspace*{2pt}\cr
(p-1)!!, &\quad $\mbox{when $p$ is even,}$}
\]
we deduce \eqref{modul} and \eqref{modul1}.

What remains to be shown
can be tediously deduced from Theorem 1.3.2 in \cite{fer}. We can also
make the following direct computations. Using \eqref{modul}, \eqref
{modul1} and \eqref{modul1bis}, we have
\begin{eqnarray*}
\E \bigl( \exp \bigl( \bigl\| B^H \bigr\|_{0,T,\mathfrak{h}} \bigr) \bigr) &
\leq&
\E \bigl( \exp ( \xi_{H,\mathfrak{h},T} ) \bigr)
\\
& \leq& \sum_{p=0}^\infty\frac{\E(\xi_{H,\mathfrak
{h},T}^{p})}{p!}
\\
& \leq& p_0!! \sum_{p=0}^{p_0}
\frac{c_{H,\mathfrak{h},T}^p}{p!} + \sum_{p=p_0+1}^{\infty}
c_{H,\mathfrak{h},T}^p\frac{(p-1)!!}{p!}
\\
& \le& p_0!! \exp(c_{H,\mathfrak{h},T}) + \sum
_{p=0}^\infty c_{H,\mathfrak{h},T}^p
\frac{(p-1)!!}{p!}
\end{eqnarray*}
where we have denoted $c_{H,\mathfrak{h},T}= 16 (H/\mathfrak
{h})T^{H-\mathfrak{h}}$.
We notice that
\[
(p-1)!!/p! = \cases{ %
 1/\bigl(2^k k
\bigr)!, & \quad$\mbox{when $p=2k$;}$
\vspace*{2pt}\cr
1/(2k+1)!!, &\quad $\mbox{when $p=2k+1$.}$}
\]
Since $(2k+1)!!\ge\prod_{i=1}^k 2k =2^kk!$, we obtain
%
\begin{eqnarray}
\label{lm} \E \bigl( \exp \bigl( \bigl\| B^H \bigr\|_{0,T,\mathfrak{h}} \bigr)
\bigr) & \le& p_0!! \exp(c_{H,\mathfrak{h},T}) + \sum
_{k=0}^\infty\frac
{c_{H,\mathfrak{h},T}^{2k}}{2^kk!} + \sum
_{k=0}^\infty\frac
{c_{H,\mathfrak{h},T}^{2k+1}}{2^kk!}
\nonumber
\\
& \le &p_0!! \exp(c_{H,\mathfrak{h},T}) + \sum
_{k=0}^\infty \biggl(\frac{c_{H,\mathfrak{h},T}^2}2
\biggr)^k \frac{1}{k!} + c_{H,\mathfrak{h},T}\sum
_{k=0}^\infty \biggl( \frac
{c_{H,\mathfrak{h},T}^2}2
\biggr)^k \frac{1}{k!}\qquad
\\
& \le& p_0!! \exp(c_{H,\mathfrak{h},T})+ ( 1+c_{H,\mathfrak
{h},T} ) \exp
\bigl(c_{H,\mathfrak{h},T}^2/2\bigr) .\nonumber
\end{eqnarray}
With
%
\begin{equation}
\label{ccc} c_{H,\mathfrak{h}}  = \biggl[\frac{2}{H-\mathfrak
{h}} \biggr] +
\frac{16 H}{\mathfrak{h}} ,
\end{equation}
the lemma is proved because when $T^{H-\mathfrak{h}}\ge\mathfrak
{h}/(8H)$, $c_{H,\mathfrak{h},T}\le c_{H,\mathfrak{h},T}^2/2$.\vadjust{\goodbreak}
\end{pf*}
We remark that if $T$ do not satisfy the condition $T\ge\mathfrak
{h}/(8H)$, then one may replace \eqref{esti-exp} by~\eqref{lm}.
%
\begin{remi}\label{rem-momo}
Thanks to \eqref{modul} and \eqref{modul1}, we have obtained the
following estimation for the moments of the H\"{o}lder norm of the
trajectories of the fBm:
%
\renewcommand{\theequation}{\arabic{equation}}
\setcounter{equation}{64}
\begin{equation}
\label{momo} {\mathbf E} \bigl( \bigl\| B^H \bigr\|_{0,T,\mathfrak{h}}^p
\bigr)  \le \biggl( 16 \frac{H}{\mathfrak{h}} \biggr)^p
T^{p(H-\mathfrak
{h})} (p-1)!!
\end{equation}
for any $p>p_0:=[2/(H-\mathfrak{h})]$.
\end{remi}
%
\section{\texorpdfstring{Proof of Proposition \protect\ref{ergo}}
{Proof of Proposition 1}}\label{ergo-proof}

\textit{Step \textup{1:} Proof of \textup{\eqref{ergo-lim1}}}.
We use the inequality
%
\renewcommand{\theequation}{\arabic{equation}}
\setcounter{equation}{65}
\begin{eqnarray}
\label{uiui} \biggl\llvert \frac{1}T \int_0^T
\ffi(X_t)\,\d t - \E \bigl( \ffi(\bar X) \bigr)\biggr\rrvert & \le&
\frac{1}T \int_0^T \bigl\llvert
\ffi(X_t)-\ffi\bigl(\bar X(\theta_t)\bigr) \bigr\rrvert \,\d t
\nonumber
\\[-8pt]
\\[-8pt]
\nonumber
&&{} + \biggl\llvert \frac{1}T \int_0^T
\ffi\bigl(\bar X(\theta_t)\bigr)\,\d t - \E \bigl( \ffi(\bar X) \bigr)
\biggr\rrvert . 
\end{eqnarray}
Since $\ffi$ has polynomial growth and $\bar X$ has moment of any
order, $\ffi(\bar X)$ is an integrable random variable and
\eqref{err} implies that the second term in the right-hand side of
\eqref{uiui} tends to $0$ almost-surely.
Now we treat the first one. The inequalities
\begin{eqnarray*}
\bigl\llvert \ffi(X_t)-\ffi\bigl(\bar X(\theta_t)\bigr)
\bigr\rrvert & \le& c_\ffi \bigl( 1+|X_t|^\ppp+ \bigl|
\bar X(\theta_t)\bigr|^\ppp \bigr) \bigl|X_t-\bar X(
\theta _t)\bigr |
\\
& \le& c_{\ffi,\ppp} \bigl( 1+\bigl|X_t-\bar X(\theta_t)\bigr|^\ppp+
2\bigl|\bar X(\theta_t)\bigr|^\ppp \bigr) \bigl|X_t-\bar X(
\theta_t) \bigr|
\end{eqnarray*}
imply that
%
\begin{eqnarray}
\label{popo} \frac{1}T \int_0^T \bigl
\llvert \ffi(X_t)-\ffi\bigl(\bar X(\theta_t)\bigr) \bigr
\rrvert \,\d t & \le& \frac{c_{\ffi,\ppp}}T \int_0^T
\bigl|X_t-\bar X(\theta_t) \bigr|^{\ppp+1}\,\d t
\nonumber
\\[-8pt]
\\[-8pt]
\nonumber
&&{} + \frac{c_{\ffi,\ppp}}T \int_0^T
\bigl|\bar X(\theta _t)\bigr|^\ppp\bigl|X_t-\bar X(
\theta_t) \bigr|\,\d t .
\end{eqnarray}
By \eqref{err2}, $|X_t-\bar X(\theta_t)|^{\ppp+1}$ tends to $0$
almost-surely and an integral version of the Toeplitz lemma
implies that the first term in the right-hand side of \eqref{popo}
tends to $0$ almost-surely. For the second one, by the
Cauchy--Schwarz inequality,
\[
\biggl( \frac{1}T \int_0^T \bigl|\bar X(
\theta_t)\bigr|^\ppp\bigl|X_t-\bar X(
\theta_t) \bigr|\,\d t \biggr)^2 \le \biggl( \frac{1}T \int
_0^T \bigl|\bar X(\theta_t)\bigr|^{2\ppp}
\,\d t \biggr) \times \biggl( \frac{1}T \int_0^T
\bigl|X_t-\bar X(\theta_t) \bigr|^2\,\d t \biggr)
\]
and thus it tends to $0$ by the same arguments that we employed before.
The proof of \eqref{ergo-lim1} is complete.

\textit{Step \textup{2:} Proof of \textup{\eqref{ergo-lim2}}}.
First, we write
\[
\Biggl\llvert \frac{1}{t_n} \int_0^{t_n}
\Biggl\{ \sum_{k=0}^{n-1} \ffi
(X_{t_k}) \1_{[t_k{,}t_{k+1})}(s) \Biggr\} \,\d s - \E \bigl( \ffi(\bar X)
\bigr) \Biggr\rrvert \le I_n^1+I_n^2
\]
with
\begin{eqnarray*}
I_n^1 & =& \frac{1}{t_n} \int_0^{t_n}
\Biggl\{ \sum_{k=0}^{n-1} \bigl\llvert
\ffi(X_{t_k}) -\ffi(X_s)\bigr\rrvert \1_{[t_k{,}t_{k+1})}(s)
\Biggr\} \,\d s  \quad\mbox{and}
\\
I_n^2 & =& \biggl\llvert \frac{1}{t_n} \int
_0^{t_n} \ffi(X_s)\,\d s - \E \bigl( \ffi(
\bar X) \bigr)\biggr\rrvert .
\end{eqnarray*}
By \eqref{ergo-lim1}, $\lim_{n\to\infty}I_n^2=0$ almost-surely. We
estimate $I_n^1$ as follows. First, of all we write:
%
\begin{eqnarray}
\label{in1} I_n^1 & \le &\frac{c_\ffi}{t_n} \int
_0^{t_n} \Biggl\{ \sum
_{k=0}^{n-1} \bigl( 1+\bigl|X_{t_k}|^\ppp+
|X_s\bigr|^\ppp \bigr) \llvert X_{t_k}
-X_s\rrvert \1_{[t_k{,}t_{k+1})}(s) \Biggr\} \,\d s
\nonumber
\\[-8pt]
\\[-8pt]
\nonumber
& \le &\frac{c_\ffi}{t_n} \Bigl( 1+\sup_{0\le u\le t_n}|X_u|^\ppp
\Bigr) \int_0^{t_n} \Biggl\{ \sum
_{k=0}^{n-1} \llvert X_{t_k}
-X_s\rrvert \1_{[t_k{,}t_{k+1})}(s) \Biggr\} \,\d s . 
\end{eqnarray}
%
Since $b$ satisfies the polynomial growth condition, it holds for
$t_k\le s\le t_{k+1}$ that
%
\begin{eqnarray}
\label{zaza0} |X_s-X_{t_k}| & \le&\int_{t_k}^s
\bigl|b(X_u)\bigr|\,\d u + \bigl|B_s^H-B_{t_k}^H\bigr|
\nonumber
\\
& \le& c_b \int_{t_k}^s \bigl( 1 +
|X_u|^\mm \bigr) \,\d u + \bigl\| B^H \bigr\|
_{0,t_n,\mathfrak{h}} |s-{t_k}|^\h
\\
& \le & c_b \Bigl( 1+ \Bigl(\sup_{0\le u\le t_n}
|X_u| \Bigr)^\mm \Bigr) \epsilon_n + \bigl\|
B^H \bigr\|_{0,t_n,\mathfrak{h}} \epsilon _n^\h .\nonumber
\end{eqnarray}
%
Under the one-sided dissipative Lipschitz condition, Proposition 1 in
\cite{kn} establishes that
\begin{eqnarray*}
\sup_{0\le u\le t_n} |X_u| & \le& c_b \Bigl(
1+|x_0| + \Bigl(\sup_{0\le u \le t_n} \bigl|B_u^H\bigr|
\Bigr)^{\mathfrak{m}} \Bigr)
\\
& \le& c_b \bigl( 1+|x_0| + \bigl\| B^H
\bigr\|^\mm_{0,t_n,\mathfrak{h}} t_n^{\mm\h} \bigr) ,
\end{eqnarray*}
with $0<\h<H$ that will be fixed later. We report the above inequality
in \eqref{zaza0}:
%
\begin{equation}
\label{zaza2} |X_s-X_{t_k}| \le c_{b,x_0}
\epsilon_n + c_b \bigl\| B^H
\bigr\|^{\mm
^2}_{0,t_n,\mathfrak{h}} t_n^{\mm^2\h}
\epsilon_n + \bigl\| B^H \bigr\| _{0,t_n,\mathfrak{h}}
\epsilon_n^\h,\qquad t_k\le s\le t_{k+1}
.
\end{equation}
Using \eqref{zaza2} in \eqref{in1}, we deduce that there exist a
constant $C$ that depends on $b$, $\ffi$ and $x_0$ such that
\begin{eqnarray*}
I_n^1 & \le& C \bigl( 1+ \bigl\| B^H
\bigr\|^{\ppp}_{0,t_n,\mathfrak{h}} t_n^{\ppp\h} \bigr)\times
\bigl( \epsilon_n + \bigl\| B^H\bigr \|^{\mm
^2}_{0,t_n,\mathfrak{h}}
t_n^{\mm^2\h} \epsilon_n + \bigl\| B^H\bigr \|
_{0,t_n,\mathfrak{h}} \epsilon_n^\h \bigr)
\\
& \le& C \bigl( \epsilon_n +\bigl\| B^H \bigr\|_{0,t_n,\mathfrak{h}}
\epsilon_n^\h+ \bigl\| B^H \bigr\|_{0,t_n,\mathfrak{h}}^{\ppp+1}
t_n^{\ppp
\h} \epsilon_n^\h+ \bigl\|
B^H \bigr\|^{\mm^2+\ppp}_{0,t_n,\mathfrak{h}} t_n^{(\mm^2+\ppp)\h}
\epsilon_n \bigr) . 
\end{eqnarray*}
The almost-sure convergence of $I_n^1$ to $0$ will follow from a
Borel--Cantelli argument. Indeed for any $\eta>0$ and for an integer
$q$ that will be chosen later, it holds that
\begin{eqnarray*}
{\mathbf P}\bigl( \bigl|I_n^1\bigr|\ge\eta\bigr) &\le&
\frac{C}{\eta} \bigl( \epsilon_n^q +
\epsilon_n^{\h q} {\mathbf E} \bigl(\bigl\| B^H
\bigr\|^q_{0,t_n,\mathfrak
{h}} \bigr) + t_n^{\h q\ppp}
\epsilon_n^{q\h} \E \bigl( \bigl\| B^H
\bigr\|^{q(\ppp
+1)}_{0,t_n,\mathfrak{h}} \bigr)\\
&&{}+ t_n^{\h q(\mm^2+\ppp)} \epsilon_n^q \E
\bigl( \bigl\| B^H \bigr\| ^{q(\mm^2+\ppp)}_{0,t_n,\mathfrak{h}} \bigr) \bigr)
\end{eqnarray*}
and by \eqref{momo} (see Remark \ref{rem-momo} in Appendix \ref
{app1}) we obtain
\begin{eqnarray*}
{\mathbf P}\bigl( \bigl|I_n^1\bigr|\ge\eta\bigr) & \le&
\frac{C}{\eta} \bigl( \epsilon_n^q +
\epsilon_n^{\h q} t_n^{q(H-\h)} +
t_n^{\h q\ppp} \epsilon_n^{q\h}
t_n^{q(\ppp+1)(H-\h)} + t_n^{\h q(\mm^2+\ppp)}
\epsilon_n^q t_n^{q(\mm^2+\ppp)(H-\h
)} \bigr)
\\
& \le& \frac{C}{\eta} \bigl( \epsilon_n^q +
\epsilon_n^{\h q} t_n^{q(H-\h)} +
\epsilon_n^{q\h} t_n^{q(\ppp+1)H -q\h} +
\epsilon_n^q t_n^{q(\mm^2+\ppp)H} \bigr) .
\end{eqnarray*}
Since $t_n^\gamma=n$ and $\epsilon_n=n^{-(\gamma-1)/\gamma}$, we
have $
\sum_{n\ge1} {\mathbf P}( |I_n^1|\ge\eta) \le\frac{C}{\eta}
( S_1+S_2+S_3+S_4 )
$
with
\begin{eqnarray*}
S_1 & = &\sum_{n\ge1} \frac{1}{n^{q(\gamma-1)/\gamma}},
\\
S_2 & =& \sum_{n\ge1} \frac{1}{n^{q(\h\gamma- H)/\gamma}},
\\
S_3 & = &\sum_{n\ge1}
\frac{1}{n^{q(\h\gamma-(\ppp+1) H)/\gamma}}\quad \mbox{and}
\\
S_4 & =& \sum_{n\ge1}
\frac{1}{n^{q(\gamma-1-(\mm^2+\ppp)H)/\gamma
}} .
\end{eqnarray*}
It is supposed that $\gamma>1+(\mm^2 +\ppp)H$. We choose $\h$ close
to $H$ in such a way that $\h\gamma-H>0$.
Moreover, since $\gamma>\ppp+1$, one may choose $\h$ such that it
satisfies additionally $\gamma H> \gamma\h>(\ppp+1)H$.
Now it is clear that we may find an integer $q$ in such a way that the
three above sums converge. The Borel--Cantelli lemma yields that $I_n^1$
converges to $0$ almost-surely.
\end{appendix}
%
%

%



\printhistory


\begin{thebibliography}{35}

\bibitem{begyn}
\begin{barticle}[mr]
\bauthor{\bsnm{Begyn},~\bfnm{Arnaud}\binits{A.}}
(\byear{2005}).
\btitle{Quadratic variations along irregular subdivisions for {G}aussian
processes}.
\bjournal{Electron. J. Probab.}
\bvolume{10}
\bpages{691--717 (electronic)}.
\bid{doi={10.1214/EJP.v10-245}, issn={1083-6489}, mr={2164027}}
\bptok{imsref}%
\end{barticle}
\endbibitem

\bibitem{beo}
\begin{bmisc}[auto:STB|2013/03/04|13:35:07]
\bauthor{\bsnm{Belfadli},~\bfnm{R.}\binits{R.}},
\bauthor{\bsnm{Es-Sebaiy},~\bfnm{K.}\binits{K.}} \AND
\bauthor{\bsnm{Ouknine},~\bfnm{Y.}\binits{Y.}}
(\byear{2011}).
\bhowpublished{Parameter estimation for fractional Ornstein--Uhlenbeck processes:
Non-ergodic case. Available at arXiv:\arxivurl{1102.5491}.}
\bptok{imsref}%
\end{bmisc}
\endbibitem

\bibitem{bishwal}
\begin{bbook}[mr]
\bauthor{\bsnm{Bishwal},~\bfnm{Jaya P.~N.}\binits{J.P.N.}}
(\byear{2008}).
\btitle{Parameter Estimation in Stochastic Differential Equations}.
\bseries{Lecture Notes in Math.}
\bvolume{1923}.
\blocation{Berlin}: \bpublisher{Springer}.
\bid{doi={10.1007/978-3-540-74448-1}, mr={2360279}}
\bptok{imsref}%
\end{bbook}
\endbibitem

\bibitem{ct}
\begin{barticle}[mr]
\bauthor{\bsnm{Chronopoulou},~\bfnm{Alexandra}\binits{A.}} \AND
\bauthor{\bsnm{Tindel},~\bfnm{Samy}\binits{S.}}
(\byear{2013}).
\btitle{On inference for fractional differential equations}.
\bjournal{Stat. Inference Stoch. Process.}
\bvolume{16}
\bpages{29--61}.
\bid{doi={10.1007/s11203-013-9076-z}, issn={1387-0874}, mr={3029332}}
\bptnote{check year}%
\bptok{imsref}%
\end{barticle}
\endbibitem

\bibitem{coeur}
\begin{barticle}[mr]
\bauthor{\bsnm{Coeurjolly},~\bfnm{Jean-Fran{\c{c}}ois}\binits{J.F.}}
(\byear{2001}).
\btitle{Estimating the parameters of a fractional {B}rownian motion by discrete
variations of its sample paths}.
\bjournal{Stat. Inference Stoch. Process.}
\bvolume{4}
\bpages{199--227}.
\bid{doi={10.1023/A:1017507306245}, issn={1387-0874}, mr={1856174}}
\bptok{imsref}%
\end{barticle}
\endbibitem

\bibitem{cp}
\begin{barticle}[mr]
\bauthor{\bsnm{Crimaldi},~\bfnm{Irene}\binits{I.}} \AND
\bauthor{\bsnm{Pratelli},~\bfnm{Luca}\binits{L.}}
(\byear{2005}).
\btitle{Convergence results for multivariate martingales}.
\bjournal{Stochastic Process. Appl.}
\bvolume{115}
\bpages{571--577}.
\bid{doi={10.1016/j.spa.2004.10.004}, issn={0304-4149}, mr={2128630}}
\bptok{imsref}%
\end{barticle}
\endbibitem

\bibitem{fer}
\begin{bincollection}[mr]
\bauthor{\bsnm{Fernique},~\bfnm{X.}\binits{X.}}
(\byear{1975}).
\btitle{Regularit\'e des trajectoires des fonctions al\'eatoires gaussiennes}.
In \bbooktitle{\'{E}cole D'\'{E}t\'e de {P}robabilit\'es de {S}aint-{F}lour,
{IV}-1974}
\bpages{1--96}.
\bseries{Lecture Notes in Math.}
\bvolume{480}.
\blocation{Berlin}: \bpublisher{Springer}.
\bid{mr={0413238}}
\bptok{imsref}%
\end{bincollection}
\endbibitem

\bibitem{kn}
\begin{barticle}[mr]
\bauthor{\bsnm{Garrido-Atienza},~\bfnm{Mar{\'{\i}}a~J.}\binits{M.J.}},
\bauthor{\bsnm{Kloeden},~\bfnm{Peter~E.}\binits{P.E.}} \AND
\bauthor{\bsnm{Neuenkirch},~\bfnm{Andreas}\binits{A.}}
(\byear{2009}).
\btitle{Discretization of stationary solutions of stochastic systems driven by
fractional {B}rownian motion}.
\bjournal{Appl. Math. Optim.}
\bvolume{60}
\bpages{151--172}.
\bid{doi={10.1007/s00245-008-9062-9}, issn={0095-4616}, mr={2524684}}
\bptok{imsref}%
\end{barticle}
\endbibitem

\bibitem{grr}
\begin{barticle}[auto:STB|2013/03/04|13:35:07]
\bauthor{\bsnm{Garsia},~\bfnm{A.~M.}\binits{A.M.}},
\bauthor{\bsnm{Rodemich},~\bfnm{E.}\binits{E.}} \AND
\bauthor{\bsnm{Rumsey},~\bfnm{H.}\binits{H.} \bsuffix{Jr.}}
(\byear{1970}).
\btitle{A real variable lemma and the continuity of paths
of some Gaussian processes}.
\bjournal{Indiana Univ. Math. J.}
\bvolume{20}
\bpages{565--578}.
\bptok{imsref}%
\end{barticle}
\endbibitem

\bibitem{h}
\begin{barticle}[mr]
\bauthor{\bsnm{Hairer},~\bfnm{Martin}\binits{M.}}
(\byear{2005}).
\btitle{Ergodicity of stochastic differential equations driven by fractional
{B}rownian motion}.
\bjournal{Ann. Probab.}
\bvolume{33}
\bpages{703--758}.
\bid{doi={10.1214/009117904000000892}, issn={0091-1798}, mr={2123208}}
\bptok{imsref}%
\end{barticle}
\endbibitem

\bibitem{h-pillai}
\begin{barticle}[mr]
\bauthor{\bsnm{Hairer},~\bfnm{M.}\binits{M.}} \AND
\bauthor{\bsnm{Pillai},~\bfnm{N.~S.}\binits{N.S.}}
(\byear{2011}).
\btitle{Ergodicity of hypoelliptic {SDE}s driven by fractional {B}rownian
motion}.
\bjournal{Ann. Inst. Henri Poincar\'e Probab. Stat.}
\bvolume{47}
\bpages{601--628}.
\bid{doi={10.1214/10-AIHP377}, issn={0246-0203}, mr={2814425}}
\bptok{imsref}%
\end{barticle}
\endbibitem

\bibitem{hn-stat}
\begin{barticle}[mr]
\bauthor{\bsnm{Hu},~\bfnm{Yaozhong}\binits{Y.}} \AND
\bauthor{\bsnm{Nualart},~\bfnm{David}\binits{D.}}
(\byear{2010}).
\btitle{Parameter estimation for fractional {O}rnstein--{U}hlenbeck processes}.
\bjournal{Statist. Probab. Lett.}
\bvolume{80}
\bpages{1030--1038}.
\bid{doi={10.1016/j.spl.2010.02.018}, issn={0167-7152}, mr={2638974}}
\bptok{imsref}%
\end{barticle}
\endbibitem

\bibitem{hns}
\begin{barticle}[mr]
\bauthor{\bsnm{Hu},~\bfnm{Yaozhong}\binits{Y.}},
\bauthor{\bsnm{Nualart},~\bfnm{David}\binits{D.}} \AND
\bauthor{\bsnm{Song},~\bfnm{Jian}\binits{J.}}
(\byear{2009}).
\btitle{Fractional martingales and characterization of the fractional
{B}rownian motion}.
\bjournal{Ann. Probab.}
\bvolume{37}
\bpages{2404--2430}.
\bid{doi={10.1214/09-AOP464}, issn={0091-1798}, mr={2573562}}
\bptok{imsref}%
\end{barticle}
\endbibitem

\bibitem{il}
\begin{barticle}[mr]
\bauthor{\bsnm{Istas},~\bfnm{Jacques}\binits{J.}} \AND
\bauthor{\bsnm{Lang},~\bfnm{Gabriel}\binits{G.}}
(\byear{1997}).
\btitle{Quadratic variations and estimation of the local {H}\"older index of a
{G}aussian process}.
\bjournal{Ann. Inst. Henri Poincar\'e Probab. Stat.}
\bvolume{33}
\bpages{407--436}.
\bid{doi={10.1016/S0246-0203(97)80099-4}, issn={0246-0203}, mr={1465796}}
\bptok{imsref}%
\end{barticle}
\endbibitem

\bibitem{kl}
\begin{barticle}[mr]
\bauthor{\bsnm{Kleptsyna},~\bfnm{M.~L.}\binits{M.L.}} \AND
\bauthor{\bsnm{Le~Breton},~\bfnm{A.}\binits{A.}}
(\byear{2002}).
\btitle{Statistical analysis of the fractional {O}rnstein--{U}hlenbeck type
process}.
\bjournal{Stat. Inference Stoch. Process.}
\bvolume{5}
\bpages{229--248}.
\bid{doi={10.1023/A:1021220818545}, issn={1387-0874}, mr={1943832}}
\bptok{imsref}%
\end{barticle}
\endbibitem

\bibitem{ku-so}
\begin{barticle}[mr]
\bauthor{\bsnm{K{\"u}chler},~\bfnm{Uwe}\binits{U.}} \AND
\bauthor{\bsnm{S{\o}rensen},~\bfnm{Michael}\binits{M.}}
(\byear{1999}).
\btitle{A note on limit theorems for multivariate martingales}.
\bjournal{Bernoulli}
\bvolume{5}
\bpages{483--493}.
\bid{doi={10.2307/3318713}, issn={1350-7265}, mr={1693604}}
\bptok{imsref}%
\end{barticle}
\endbibitem

\bibitem{kou}
\begin{bbook}[mr]
\bauthor{\bsnm{Kutoyants},~\bfnm{Yury~A.}\binits{Y.A.}}
(\byear{2004}).
\btitle{Statistical Inference for Ergodic Diffusion Processes}.
\bseries{Springer Series in Statistics}.
\blocation{London}: \bpublisher{Springer}.
\bid{mr={2144185}}
\bptok{imsref}%
\end{bbook}
\endbibitem

\bibitem{lbm}
\begin{barticle}[mr]
\bauthor{\bsnm{Le~Breton},~\bfnm{Alain}\binits{A.}} \AND
\bauthor{\bsnm{Musiela},~\bfnm{Marek}\binits{M.}}
(\byear{1986}).
\btitle{Une loi des grands nombres pour les martingales locales continues
vectorielles et son application en r\'egression lin\'eaire stochastique}.
\bjournal{C.~R. Acad. Sci. Paris S\'er.~I Math.}
\bvolume{303}
\bpages{421--424}.
\bid{issn={0249-6291}, mr={0862208}}
\bptok{imsref}%
\end{barticle}
\endbibitem

\bibitem{ll}
\begin{barticle}[mr]
\bauthor{\bsnm{L{\"o}cherbach},~\bfnm{Eva}\binits{E.}} \AND
\bauthor{\bsnm{Loukianova},~\bfnm{Dasha}\binits{D.}}
(\byear{2008}).
\btitle{On {N}ummelin splitting for continuous time {H}arris recurrent {M}arkov
processes and application to kernel estimation for multi-dimensional
diffusions}.
\bjournal{Stochastic Process. Appl.}
\bvolume{118}
\bpages{1301--1321}.
\bid{doi={10.1016/j.spa.2007.09.003}, issn={0304-4149}, mr={2427041}}
\bptok{imsref}%
\end{barticle}
\endbibitem

\bibitem{loulou}
\begin{barticle}[mr]
\bauthor{\bsnm{Loukianova},~\bfnm{D.}\binits{D.}} \AND
\bauthor{\bsnm{Loukianov},~\bfnm{O.}\binits{O.}}
(\byear{2008}).
\btitle{Uniform deterministic equivalent of additive functionals and
non-parametric drift estimation for one-dimensional recurrent diffusions}.
\bjournal{Ann. Inst. Henri Poincar\'e Probab. Stat.}
\bvolume{44}
\bpages{771--786}.
\bid{doi={10.1214/07-AIHP141}, issn={0246-0203}, mr={2446297}}
\bptok{imsref}%
\end{barticle}
\endbibitem

\bibitem{mp}
\begin{barticle}[mr]
\bauthor{\bsnm{Mishra},~\bfnm{M.~N.}\binits{M.N.}} \AND
\bauthor{\bsnm{Prakasa~Rao},~\bfnm{B.~L.~S.}\binits{B.L.S.}}
(\byear{2011}).
\btitle{Nonparametric estimation of trend for stochastic differential equations
driven by fractional {B}rownian motion}.
\bjournal{Stat. Inference Stoch. Process.}
\bvolume{14}
\bpages{101--109}.
\bid{doi={10.1007/s11203-010-9051-x}, issn={1387-0874}, mr={2794957}}
\bptok{imsref}%
\end{barticle}
\endbibitem

\bibitem{mishura}
\begin{bbook}[mr]
\bauthor{\bsnm{Mishura},~\bfnm{Yuliya~S.}\binits{Y.S.}}
(\byear{2008}).
\btitle{Stochastic Calculus for Fractional {B}rownian Motion and Related
Processes}.
\bseries{Lecture Notes in Math.}
\bvolume{1929}.
\blocation{Berlin}: \bpublisher{Springer}.
\bid{doi={10.1007/978-3-540-75873-0}, mr={2378138}}
\bptok{imsref}%
\end{bbook}
\endbibitem

\bibitem{nt}
\begin{bmisc}[auto:STB|2013/03/04|13:35:07]
\bauthor{\bsnm{Neuenkirch},~\bfnm{A.}\binits{A.}} \AND
\bauthor{\bsnm{Tindel},~\bfnm{S.}\binits{S.}}
(\byear{2011}).
\bhowpublished{A least square-type procedure for parameter estimation in stochastic
differential equations with additive fractional noise. Available at arXiv:\arxivurl{1111.1816}.}
\bptok{imsref}%
\end{bmisc}
\endbibitem

\bibitem{nvv}
\begin{barticle}[mr]
\bauthor{\bsnm{Norros},~\bfnm{Ilkka}\binits{I.}},
\bauthor{\bsnm{Valkeila},~\bfnm{Esko}\binits{E.}} \AND
\bauthor{\bsnm{Virtamo},~\bfnm{Jorma}\binits{J.}}
(\byear{1999}).
\btitle{An elementary approach to a {G}irsanov formula and other analytical
results on fractional {B}rownian motions}.
\bjournal{Bernoulli}
\bvolume{5}
\bpages{571--587}.
\bid{doi={10.2307/3318691}, issn={1350-7265}, mr={1704556}}
\bptok{imsref}%
\end{barticle}
\endbibitem

\bibitem{n}
\begin{bbook}[mr]
\bauthor{\bsnm{Nualart},~\bfnm{David}\binits{D.}}
(\byear{2006}).
\btitle{The {M}alliavin Calculus and Related Topics},
\bedition{2nd} ed.
\bseries{Probability and Its Applications (New York)}.
\blocation{Berlin}: \bpublisher{Springer}.
\bid{mr={2200233}}
\bptok{imsref}%
\end{bbook}
\endbibitem

\bibitem{no}
\begin{barticle}[mr]
\bauthor{\bsnm{Nualart},~\bfnm{David}\binits{D.}} \AND
\bauthor{\bsnm{Ouknine},~\bfnm{Youssef}\binits{Y.}}
(\byear{2002}).
\btitle{Regularization of differential equations by fractional noise}.
\bjournal{Stochastic Process. Appl.}
\bvolume{102}
\bpages{103--116}.
\bid{doi={10.1016/S0304-4149(02)00155-2}, issn={0304-4149}, mr={1934157}}
\bptok{imsref}%
\end{barticle}
\endbibitem

\bibitem{pl}
\begin{barticle}[mr]
\bauthor{\bsnm{Papavasiliou},~\bfnm{Anastasia}\binits{A.}} \AND
\bauthor{\bsnm{Ladroue},~\bfnm{Christophe}\binits{C.}}
(\byear{2011}).
\btitle{Parameter estimation for rough differential equations}.
\bjournal{Ann. Statist.}
\bvolume{39}
\bpages{2047--2073}.
\bid{doi={10.1214/11-AOS893}, issn={0090-5364}, mr={2893861}}
\bptok{imsref}%
\end{barticle}
\endbibitem

\bibitem{rao}
\begin{bbook}[mr]
\bauthor{\bsnm{Prakasa~Rao},~\bfnm{B.~L.~S.}\binits{B.L.S.}}
(\byear{2010}).
\btitle{Statistical Inference for Fractional Diffusion Processes}.
\bseries{Wiley Series in Probability and Statistics}.
\blocation{Chichester}: \bpublisher{Wiley}.
\bid{mr={2778592}}
\bptok{imsref}%
\end{bbook}
\endbibitem

\bibitem{ry}
\begin{bbook}[mr]
\bauthor{\bsnm{Revuz},~\bfnm{Daniel}\binits{D.}} \AND
\bauthor{\bsnm{Yor},~\bfnm{Marc}\binits{M.}}
(\byear{1999}).
\btitle{Continuous Martingales and {B}rownian Motion},
\bedition{3rd} ed.
\bseries{Grundlehren der Mathematischen Wissenschaften [Fundamental Principles
of Mathematical Sciences]}
\bvolume{293}.
\blocation{Berlin}: \bpublisher{Springer}.
\bid{mr={1725357}}
\bptok{imsref}%
\end{bbook}
\endbibitem

\bibitem{s-expo}
\begin{barticle}[mr]
\bauthor{\bsnm{Saussereau},~\bfnm{Bruno}\binits{B.}}
(\byear{2012}).
\btitle{Deviation probability bounds for fractional martingales and related
remarks}.
\bjournal{Statist. Probab. Lett.}
\bvolume{82}
\bpages{1610--1618}.
\bid{doi={10.1016/j.spl.2012.05.005}, issn={0167-7152}, mr={2930666}}
\bptok{imsref}%
\end{barticle}
\endbibitem

\bibitem{spo}
\begin{barticle}[mr]
\bauthor{\bsnm{Spokoiny},~\bfnm{Vladimir~G.}\binits{V.G.}}
(\byear{2000}).
\btitle{Adaptive drift estimation for nonparametric diffusion model}.
\bjournal{Ann. Statist.}
\bvolume{28}
\bpages{815--836}.
\bid{doi={10.1214/aos/1015951999}, issn={0090-5364}, mr={1792788}}
\bptok{imsref}%
\end{barticle}
\endbibitem

\bibitem{tv}
\begin{barticle}[mr]
\bauthor{\bsnm{Tudor},~\bfnm{Ciprian~A.}\binits{C.A.}} \AND
\bauthor{\bsnm{Viens},~\bfnm{Frederi~G.}\binits{F.G.}}
(\byear{2007}).
\btitle{Statistical aspects of the fractional stochastic calculus}.
\bjournal{Ann. Statist.}
\bvolume{35}
\bpages{1183--1212}.
\bid{doi={10.1214/009053606000001541}, issn={0090-5364}, mr={2341703}}
\bptok{imsref}%
\end{barticle}
\endbibitem

\bibitem{tv-hal}
\begin{bmisc}[auto:STB|2013/03/04|13:35:07]
\bauthor{\bsnm{Tudor},~\bfnm{C.~A.}\binits{C.A.}} \AND
\bauthor{\bsnm{Viens},~\bfnm{F.~G.}\binits{F.G.}}
(\byear{2007}).
\bhowpublished{Statistical aspects of the fractional stochastic
calculus. Available at \url{http://hal.archives-ouvertes.fr/hal-00130622/}.}
\bptok{imsref}%
\end{bmisc}
\endbibitem


\bibitem{vz}
\begin{barticle}[mr]
\bauthor{\bparticle{van} \bsnm{Zanten},~\bfnm{Harry}\binits{H.}}
(\byear{2000}).
\btitle{A multivariate central limit theorem for continuous local martingales}.
\bjournal{Statist. Probab. Lett.}
\bvolume{50}
\bpages{229--235}.
\bid{doi={10.1016/S0167-7152(00)00108-5}, issn={0167-7152}, mr={1792301}}
\bptok{imsref}%
\end{barticle}
\endbibitem

\end{thebibliography}
\end{document}